\newtheorem{theorem}{Theorem}[section]
\newtheorem{lemma}[theorem]{Lemma}
\newtheorem{proposition}[theorem]{Proposition}
\theoremstyle{definition}
\newtheorem{definition}[theorem]{Definition}
\newtheorem{remark}{Remark}
\newcommand{\Lip}{\textup{Lip}}
\newcommand{\loc}{\textup{loc}}
\newcommand{\N}{\mathbb{N}}
\newcommand{\R}{\mathbb{R}}
\newcommand{\cl}{\mathcal{L}}
\newcommand{\ct}{\mathcal{T}}
\newcommand{\W}{\mathcal{W}}
\newcommand{\PP}{\mathcal{P}_1}
\newcommand{\PC}{\mathcal{P}_c}
\DeclareMathOperator{\supp}{supp}
\DeclareMathOperator{\argmin}{arg\,min}
\DeclareMathOperator*{\esssup}{ess\,sup}
\newcommand{\Fun}[1]{F^{[#1]}}
\newcommand{\Energy}{\mathcal E}
\newcommand{\Ea}[1]{\Energy^{[a]}(#1)}
\newcommand{\Eahat}{\Ea{\widehat a}}
\newcommand{\Ean}{\Energy^{[a],N}}
\newcommand{\Eahatn}{\Ean(\widehat a)}
\newcommand{\prerho}{\overline\rho}
\newcommand{\x}{x^{[a]}}
\newcommand{\xahat}{x^{[\widehat a]}}
\newcommand{\dotx}{\dot{x}^{[a]}}
\newcommand{\dotxahat}{\dot{x}^{[\widehat a]}}
\title{Inferring Interaction Rules from Observations of Evolutive Systems I: The Variational Approach}
\author{M. Bongini\footnote{Faculty of Mathematics, Technical University of Munich, Boltzmannstrasse 3, 85748 Garching bei M\"unchen, Germany, Email: mattia.bongini@ma.tum.de}, 
M. Fornasie\footnote{Faculty of Mathematics, Technical University of Munich, Boltzmannstrasse 3, 85748 Garching bei M\"unchen, Germany, Email: massimo.fornasier@ma.tum.de}r, 
M. Hansen\footnote{Faculty of Mathematics, Technical University of Munich, Boltzmannstrasse 3, 85748 Garching bei M\"unchen, Germany, Email: markus.hansen@ma.tum.de}, 
and M. Maggioni\footnote{Department of Mathematics, Duke University, 	117 Physics Bldg., Science Dr., Box 90320
Durham, NC 27708-0320
U.S.A., Email: mauro@math.duke.edu}}
\date{}
\begin{document}
\maketitle

\begin{abstract}
In this paper we are concerned with the  learnability of nonlocal interaction kernels for  first order systems modeling certain social interactions, from observations of realizations of their dynamics. This paper is the first  of a series  on learnability of nonlocal interaction kernels and presents a variational approach to the problem. In particular, we assume here that  the kernel to be learned is bounded and locally Lipschitz continuous and that the initial conditions of the systems are drawn identically and independently at random according to a given initial probability distribution. Then the minimization over a rather arbitrary  sequence of (finite dimensional) subspaces of a least square functional measuring the discrepancy from observed trajectories  produces uniform approximations to the kernel on compact sets. The convergence result is obtained by combining mean-field limits, transport methods, and a $\Gamma$-convergence argument. A crucial condition for the learnability is a certain coercivity property of the least square functional, majoring an $L_2$-norm discrepancy to the kernel with respect to a probability measure, depending on the given initial probability distribution by suitable push forwards and transport maps. We illustrate the convergence result by means of several numerical experiments. 
\end{abstract}
{\bf Keywords}: nonlocal interaction kernel learning, first order nonlocal interaction equations, mean-field equations, $\Gamma$-convergence

\bigskip

\tableofcontents

\section{Introduction}

What are the instinctive individual reactions which make a group of animals forming coordinated movements, for instance a flock of migrating birds or a school of fish? Which biological interactions between cells produce the formation of complex structures, like tissues and organs? What are the mechanisms which induce certain significant changes in a large amount of players in the financial market? 
In this paper we are concerned with the ``mathematization'' of the problem of learning or inferring interaction rules from observations of evolutions. The framework we consider is the one of  evolutions driven by gradient descents.
The study of gradient flow evolutions to minimize  certain energetic landscapes has been the subject of intensive research in the past years \cite{AGS}. Some of the most recent models are  aiming at describing time-dependent phenomena also in biology or even in social dynamics, borrowing a leaf from more established and classical  models in physics. For instance, starting with the seminal papers of Vicsek et. al. \cite{VCBCS95} and Cucker-Smale \cite{CucSma07}, there has been a flood of models describing consensus or opinion formation,  modeling the exchange of information as long-range social interactions (forces) between active agents (particles). However, for the analysis, but even more crucially for the reliable and realistic numerical simulation of such phenomena, one presupposes a complete understanding and determination of the governing energies. Unfortunately, except for physical situations where the calibration of the model can be done by measuring the governing forces rather precisely, for some relevant macroscopical models in physics and most of the models in biology and social sciences the governing energies are far from being precisely determined. In fact, very often in these studies the governing energies are just predetermined to be able to reproduce, at least approximately or qualitatively, some of the macroscopical effects of the observed dynamics, such as the formation of certain patterns, but there has been relatively little effort in the applied mathematics literature towards matching data from real-life cases. 

In this paper we aim at bridging in the specific setting of first order models, the well-developed theory of dynamical systems and mean-field equations with classical  approaches of approximation theory, nonlinear time series analysis, and machine learning. We  provide a  mathematical framework for the reliable identification of the governing forces from data obtained by direct observations of corresponding time-dependent evolutions. This is a new kind of inverse problem, beyond more traditionally considered ones, as the forward map is a strongly nonlinear  evolution, highly dependent on the probability measure generating the initial conditions. As we aim at a precise quantitative analysis, and to be very concrete, we will  attack the learning of the governing laws of evolution for specific models in social dynamics governed by nonlocal interactions. The models considered in the scope of this paper are deterministic, however we intend in follow up work to extend our results towards stochastic dynamical systems.


\subsection{General abstract framework}

Many time-dependent phenomena in physics, biology, and social sciences can be modeled by a function $x:[0,T] \to \mathcal H$, where $\mathcal H$ represents the space of states of the physical, biological or social system, which evolves from an initial configuration $x(0)=x_0$  towards a more convenient state or a new equilibrium. The space $\mathcal H$ can be a conveniently chosen Banach space or just a metric space; let $\operatorname{dist}_{\mathcal H}$ be the metric on $\mathcal H$.
This implicitly assumes that $x$ evolves driven by a minimization process of a potential energy $\mathcal J: \mathcal H \times [0,T] \to \mathbb R$.  In this preliminary introduction we consciously avoid specific assumptions on  $\mathcal J$, as we wish to keep a rather general view. We restrict the presentation to particular cases below.%

Inspired by physics, for which conservative forces are the derivatives of the potential energies, one can describe the evolution as satisfying a gradient flow inclusion of the type
\begin{equation}\label{gradientflow}
\dot x(t) \in - \partial_x \mathcal J(x(t),t),
\end{equation}
where $\partial_x \mathcal J(x,t)$ is some notion of differential of $\mathcal J$ with respect to $x$, which might already take into consideration additional constraints which are binding the states to certain sets.

\subsection{Example of gradient flow of nonlocally interacting particles}\label{sec:gradflow}
Let us introduce an example of the general framework described above. It is actually the main focus of this paper.
Assume that $x=(x_1,\dots,x_N) \in \mathcal H \equiv  \mathbb R^{d\times N}$ and that 
$$
\mathcal J_N(x) = \frac{1}{2N} \sum_{i,j=1}^N A(| x_i -  x_j |),
$$
where $A:\mathbb R_+ \to \mathbb R$ is a suitable nonlinear interaction kernel function, which, for simplicity we assume to be smooth (see below more precise conditions), and $|\cdot|$ is the Euclidean norm in $\mathbb R^d$. Then, the formal unconstrained gradient flow \eqref{gradientflow} associated to this energy is written coordinatewise as
\begin{equation}\label{fdgradientflow}
\dot x_i(t) = \frac{1}{N} \sum_{j \neq i} \frac{A'(| x_i(t) -  x_j(t) |)}{| x_i(t) -  x_j(t) |} (x_j(t) - x_i(t)), \quad i=1,\dots,N.
\end{equation}
Under suitable assumptions of local Lipschitz continuity and boundedness of the interaction function
\begin{equation}\label{intker}
a(\cdot) := \frac{A'(|\cdot|)}{| \cdot |},
\end{equation} this evolution is well-posed for any given $x(0)=x_0$ and it is expected to converge for $t \to \infty$ to configurations of the points whose mutual distances are close to local minimizers of the function $A$, representing steady states of the evolution as well as critical points of $\mathcal J_N$.\\
It is also well-known, see \cite{AGS} and Proposition \ref{pr:exist} below, that for $N \to \infty$ a mean-field approximation holds: if the initial conditions $x_i(0)$ are i.i.d. according to a compactly supported probability measure $\mu_0 \in \mathcal P_c(\mathbb R^d)$ for $i=1,2,3, \dots$, the empirical measure $\mu^N(t) = \frac{1}{N} \sum_{i=1}^N \delta_{x_i(t)}$ weakly converges for $N \to \infty$  to the probability measure-valued trajectory $t \mapsto \mu(t)$ satisfying  the equation
\begin{equation}\label{eq:meanfield}
\partial_t \mu(t) = - \nabla \cdot ((\Fun{a} * \mu(t)) \mu(t)), \quad \mu(0)=\mu_0,
\end{equation}
in weak sense, 
where $\Fun{a}(z) =-a(|z|)z=-A'(|z|)z/|z|$, for $z \in \mathbb R^{d}$. In fact the differential equation \eqref{eq:meanfield} corresponds again to a gradient flow of the ``energy''
$$
\mathcal J (\mu) = \int_{\mathbb R^{d\times d}} A(| x-  y |) d \mu(x) d\mu(y),
$$
on the metric space $\mathcal H =\mathcal P_c(\mathbb R^d)$ endowed with the so-called Wasserstein distance. Continuity equations of the type \eqref{eq:meanfield} with nonlocal interaction kernels are currently the subject of intensive research  towards the modeling of the biological and social behavior of microorganisms, animals, humans, etc. We refer to the  articles \cite{13-Carrillo-Choi-Hauray-MFL,cafotove10} for recent overviews on this subject. Despite the tremendous theoretical success of such research direction in terms of mathematical results on well-posedness and asymptotic behavior of solutions, as we shall stress below in more detail, one of the issues which is so far scarcely addressed in the study of models of the type \eqref{fdgradientflow} or \eqref{eq:meanfield} is their actual applicability. Most of the results are addressing a purely {\it qualitative analysis} given certain smoothness and asymptotic properties of the kernels $A$ or $a$ at the origin or at infinity, in terms of well-posedness or in terms of asymptotic behavior of the solution for $t \to \infty$.  Certainly such results are of great importance, as such interaction functions, if ever they can really describe social dynamics,  are likely to differ significantly from well-known models from physics and it is reasonable and legitimate to consider a large variety of classes of such functions.
However, a solid mathematical framework which establishes the conditions of ``learnability'' of the interaction kernels from observations of the dynamics is currently not available and it will be the main subject of this paper.

\subsection{Parametric energies and their identifications}

Let us now return to consider again an abstract energy $\mathcal J^{[a]}$ and let us assume that it is dependent on a parameter function $a$, as indicated in the superscript. As in the example mentioned above, $a$ may be defining a nonlocal interaction kernel as in  \eqref{intker}. The parameter function $a$ not only determines the abstract energy, but also the corresponding evolutions  $t \mapsto \x(t)$ driven according to \eqref{gradientflow}, for fixed initial conditions $\x(0)=x_0$. (Here we assume that the class of $a$ is such that the evolutions exist and they are essentially well-posed; we explicitly stress again the dependency on $a$ with a superscript $[a]$, which below we may remove as soon as such dependency is clear from the context.)
The fundamental question to be here addressed is: can we recover $a$ with high accuracy given some observations of the realized evolutions? This question is prone to several specifications, for instance, we may want to assume that the initial conditions are generated according to a certain probability distribution or they are chosen deterministically ad hoc to determine at best $a$, that the observations are complete or incomplete, etc. As one  quickly realizes, this is a very broad field to explore with many possible developments. Surprisingly, there are no results in this direction at this level of generality, and relatively little is done in the specific directions we mentioned in the example above. We  refer, for instance,  to \cite{parisi08,parisi08-1,parisi08-2,Hildenbrandt01112010,mann11,heoemascszwa11} and references therein, for groundbreaking statistical studies on the inference of social rules in collective behavior. 
\subsection{The optimal control approach and its drawbacks}
Let us introduce an approach, which  perhaps would be  naturally  considered at a first instance, and focus for a moment on the gradient flow model \eqref{gradientflow}. Given a certain gradient flow evolution $t \mapsto \x(t)$ depending on the unknown parameter function $a$, one might decide to design the recovery of $a$ as an optimal control problem \cite{brpi07}: for instance, we may seek a parameter function $\widehat a$ which minimizes
\begin{equation}\label{optcontr}
\Eahat = \frac{1}{T}\int_0^T \left [ \operatorname{dist}_{\mathcal H}(\x(s) - \xahat(s))^2 + \mathcal R(\widehat a) \right ] d s ,
\end{equation}
being $t \mapsto \xahat(t)$ the solution of gradient flow \eqref{gradientflow} for $\mathcal J = \mathcal J^{[\widehat a]}$, i.e.,
\begin{equation}\label{gradientflow2}
\dotxahat(t) \in - \partial_x \mathcal J^{[a]}(x^{[\widehat a]}(t),t),
\end{equation}
and $\mathcal R(\cdot)$ is a suitable regularization functional, which restricts the possible minimizers of \eqref{optcontr} to a specific class. The first fundamental problem one immediately encounters with this formulation is the strongly nonlinear dependency of $t \mapsto \xahat(t)$ on $\widehat a$, which results in a strong non-convexity of the functional \eqref{optcontr}. This also implies that a direct minimization of \eqref{optcontr} would risk to lead to suboptimal solutions, and even the computation of a first order optimality condition in terms of Pontryagin's minimum principle would not characterize uniquely the minimal solutions. Besides these fundamental hurdles, the numerical implementation of either strategy (direct optimization or solution of the first order optimality conditions) is expected to be computationally unfeasible to reasonable degree of accuracy as soon as the number of particles $N$ is significantly large (the well-known term {\it curse of dimensionality} conied by Richard E. Bellman for optimal control problems).

\subsection{A variational approach towards learning parameter functions in nonlocal energies}\label{sec:wp2}

Let us now consider again the more specific framework of the example in Section \ref{sec:gradflow}. We restrict our attention to interaction kernels $a$ belonging to the following \textit{set of admissible kernels}
\begin{align*}
	X=\bigl\{b:\R_+\rightarrow\R\,|\ b \in L_{\infty}(\R_+) \cap W^{1}_{\infty,\loc}(\R_+) \bigr \}.
\end{align*}
In particular every $a \in X$ is weakly differentiable, and its local Lipschitz constant $\Lip_{K}(a)$ is finite for every compact set $K \subset \R_+$.
Our goal is to learn the unknown interaction function $a \in X$ from the observation of the dynamics of the empirical measure $\mu^N$, defined by $\mu^N(t)=\frac{1}{N} \sum_{i=1}^N \delta_{\x_i(t)}$, where $\x_i(t)$ are driven by the interaction kernel $a$ according to the equations 
\begin{equation}\label{fdgradientflow2}
\dotx_i(t) = \frac{1}{N} \sum_{j \neq i} a(| \x_i(t) -  \x_j(t) |) (\x_j(t) - \x_i(t)), \quad i=1,\dots,N.
\end{equation}
Instead of the nonconvex optimal control problem above, we propose an alternative, direct approach which is both computationally very efficient and guarantees accurate approximations under reasonable assumptions.
In particular, we consider as an estimator of the kernel $a$ a minimizer of the following \textit{discrete error functional}
\begin{align}\label{eq-def-error1}
	\begin{split}
	\Eahatn = \frac{1}{T}\int_0^T\frac{1}{N}\sum_{i=1}^N\biggl|\frac{1}{N}\sum_{j=1}^N
			\left(\widehat a(|\x_i(t)-\x_j(t)|)(\x_i(t) - \x_j(t))-\dotx_i(t)\right)\biggr|^2 dt,
	\end{split}
\end{align}
among all competitor functions $\widehat a \in X$. 
Actually, the minimization of $\mathcal E^{[a],N}$ has a close connection to the optimal control problem, as it also promotes the minimization of the discrepancy $ \operatorname{dist}_{\mathcal H}(\x(s) - \xahat(s))^2$ in  \eqref{optcontr} (here we remind that in this setting $\mathcal H$ is the Euclidean space $\mathbb R^{d \times N}$):
\begin{proposition}\label{trajapprox}
If $a,\widehat a \in X$ then there exist a constant $C>0$ depending on $T, a$, and $\x(0)$ such that
\begin{equation}\label{eq:trajapprox}
 \operatorname{dist}_{\mathcal H}(\x(s) - \xahat(s))^2= \| \x (t) -\xahat  (t) \|^2 \leq C {\Eahatn}, 
\end{equation}
for all $t \in [0,T]$, where $\x$ and  $\xahat  $ are the solutions of \eqref{fdgradientflow2} for the interaction kernels $a$ and $\widehat a$ respectively. (Here $\| x \|^2 = \frac{1}{N} \sum_{i=1}^N |x_i|^2$, for $x \in \mathbb R^{d \times N}$.)
\end{proposition}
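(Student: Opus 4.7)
The plan is to let $e_i(t) := \x_i(t) - \xahat_i(t)$ and $\|e(t)\|^2 := \frac{1}{N}\sum_{i=1}^N |e_i(t)|^2$, observe that $e(0) = 0$ since both trajectories start from the same initial configuration, and derive a Gr\"onwall-type differential inequality for $\|e(t)\|^2$ whose inhomogeneous term is exactly the integrand of $\Eahatn$. The algebraic heart of the argument is to insert the $\widehat a$-driven velocity field evaluated at the \emph{true} trajectory $\x$, producing the decomposition
\begin{equation*}
\dot e_i(t) = \underbrace{\dotx_i(t) - \frac{1}{N}\sum_{j=1}^N \widehat a(|\x_i - \x_j|)(\x_j - \x_i)}_{r_i(t)} \; + \; \underbrace{\frac{1}{N}\sum_{j=1}^N\bigl[\widehat a(|\x_i - \x_j|)(\x_j - \x_i) - \widehat a(|\xahat_i - \xahat_j|)(\xahat_j - \xahat_i)\bigr]}_{q_i(t)}.
\end{equation*}
Up to a sign, $r_i$ is precisely the inner summand of $\Eahatn$, so that $\int_0^T \frac{1}{N}\sum_i |r_i(t)|^2\,dt = T\,\Eahatn$; the remainder $q_i$ is a \emph{stability} term that I would control by a Lipschitz-type constant of the $\widehat a$-vector field multiplied by $\|e\|$.

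For the stability bound I first need both trajectories to stay in a common bounded region. A standard Gr\"onwall estimate on the diameter $D(t) = \max_{i,j}|\x_i(t) - \x_j(t)|$ uses $|\dotx_i - \dotx_j| \leq 2\|a\|_\infty D(t)$ to yield $D(t) \leq D(0)\,e^{2\|a\|_\infty T}$, and the center of mass of $\x(t)$ is preserved by the antisymmetry of the pairwise interactions; the same argument applied to $\xahat$ uses $\|\widehat a\|_\infty$. Hence both trajectories lie in a ball $B_R \subset \R^d$ with $R$ depending on $T$, $\x(0)$, $\|a\|_\infty$, $\|\widehat a\|_\infty$. On $B_R \times B_R$ the map $(x,y) \mapsto \widehat a(|x-y|)(y-x)$ is globally Lipschitz with constant $L \leq \|\widehat a\|_\infty + 2R\,\Lip_{[0,2R]}(\widehat a)$, which gives $|q_i(t)| \leq L(|e_i(t)| + \|e(t)\|)$ after applying Cauchy--Schwarz to $\frac{1}{N}\sum_j |e_j|$.

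Putting the pieces together via $\frac{d}{dt}\|e\|^2 = \frac{2}{N}\sum_i \langle e_i, r_i + q_i\rangle$ and Young's inequality yields $\frac{d}{dt}\|e\|^2 \leq (1+4L)\|e\|^2 + \frac{1}{N}\sum_i|r_i|^2$, so Gr\"onwall with $e(0) = 0$ produces $\|e(t)\|^2 \leq T\,e^{(1+4L)T}\,\Eahatn$, giving the claim with $C := T\,e^{(1+4L)T}$. The main delicacy — and the reason the stated dependency $C = C(T, a, \x(0))$ deserves scrutiny — is that $L$ also depends on $\widehat a$ through $\|\widehat a\|_\infty$ and $\Lip_{[0,2R]}(\widehat a)$; this is essentially harmless in the sequel, where the competitors $\widehat a$ range over finite-dimensional subspaces on which such norms are uniformly controlled, but strictly speaking the estimate above yields $C = C(T, a, \widehat a, \x(0))$. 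One can alternatively recover a constant depending only on $a$ via a bootstrap argument, confining $\xahat$ to the same ball as $\x$ as long as $\|e\|$ stays small, which closes because $\|e(0)\|=0$.
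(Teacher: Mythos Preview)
Your proof is correct and follows essentially the same strategy as the paper: insert the $\widehat a$-velocity evaluated along the true trajectory $\x$, split into the error-functional term $r_i$ and the Lipschitz stability term $q_i$, bound the latter via $\|\widehat a\|_{L_\infty}$ and $\Lip_{[0,2R]}(\widehat a)$ on a common ball, and close with Gr\"onwall. The only cosmetic difference is that the paper works with the integral form $\|e(t)\|^2 = \|\int_0^t \dot e\|^2 \le t\int_0^t\|\dot e\|^2$ (Jensen) before applying Gr\"onwall, whereas you differentiate $\|e\|^2$ and use Young's inequality; both routes land on the same estimate.

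Your observation about the constant is spot on and worth emphasizing: the paper's own proof yields
\[
C = 2T^2\exp\!\bigl(8T^2(\|\widehat a\|_{L_\infty(K)}^2 + (R\,\Lip_K(\widehat a))^2)\bigr),
\]
which, exactly as you note, depends on $\widehat a$ and not only on $(T,a,\x(0))$. So the discrepancy you flag between the statement and what the argument actually delivers is present in the paper as well; your remark that this is harmless once the competitors $\widehat a$ are confined to $X_{M,K}$ is precisely how the paper uses the proposition downstream.
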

Therefore, if $\widehat a$ makes $\Eahatn$ small, the trajectories $t\to \xahat(t)$ of  system \eqref{fdgradientflow2} with interaction kernel $\widehat a$ instead of $a$ are as well a good approximation of the trajectories $t \mapsto \x(t)$ at finite time. 
The proof of this statement follows by Jensen's inequality and an application of Gronwall's lemma, as reported in detail in Section \ref{sec:learn}. \\

 For simplicity of notations, we may choose to ignore below the dependence on $a$ of the trajectories,  and write $x \equiv \x$ when such a dependence is clear from the context. Additionally, whenever we consider the limit $N \to \infty$,  we may denote the dependency of the trajectory on the number of particles $N \in \mathbb N$ by setting $x^N\equiv x \equiv \x$.
\\

Contrary to the optimal control approach, the functional $\Ean$ is convex and can be easily computed from witnessed trajectories $x_i(t)$ and $\dot x_i(t)$. We may even consider discrete-time approximations of the time derivatives $\dot x_i$ (e.g., by finite differences) and we shall assume that the data of the problem is the full set of observations $x_i(t)$ for $t \in [0,T]$, for a prescribed finite time horizon $T>0$. Furthermore, being a simple quadratic functional, its minimizers can be efficiently numerically approximated on a finite element space: given a finite dimensional space $V \subset X$, we let
\begin{equation}\label{fdproxy}
\widehat a_{N,V} = \argmin_{\widehat a \in V} \Eahatn.
\end{equation}
The fundamental mathematical question addressed in this paper is
\begin{itemize}
\item[(Q)] For which choice of the approximating spaces $V \in \Lambda$ (we assume here that $\Lambda$ is a countable family of approximating subspaces of $X$) does $\widehat a_{N,V} \to a$ for $N \to \infty$ and $V \to X$ and in which topology should the convergence hold?
\end{itemize}
We show now how we address this issue in detail by a variational approach, seeking a limit functional, for which techniques of $\Gamma$-convergence \cite{MR1201152}, whose general aim is establishing the convergence of minimizers for a sequence of equi-coercive functionals to minimizers of a target functional, may provide a clear characterization of the limits for the sequence of minimizers $(\widehat a_{N,V})_{N \in \mathbb N, V \in \Lambda}$.
Recalling again that $\Fun{a}(z) =-a(| z |)z$, for $z \in \mathbb R^{d}$, we rewrite the functional \eqref{eq-def-error1} as follows:
\begin{align}\label{pirlo}
	\begin{split}
	\Eahatn & = \frac{1}{T}\int_0^T\frac{1}{N}\sum_{i=1}^N\biggl|\frac{1}{N}\sum_{j=1}^N
			\bigl(\Fun{\widehat a}-\Fun{a}\bigr)(x_i-x_j)\biggr|^2 dt\\
			& = \frac{1}{T}\int_0^T \int_{\R^d} \biggl|\bigl(\Fun{\widehat a}-\Fun{a}\bigr)\ast\mu^N(t)\biggr|^2d\mu^N(t)(x)dt,
	\end{split}
\end{align}
for $\mu^N(t) = \frac{1}{N}\sum^N_{i = 1} \delta_ {x_i(t)}$.
This formulation of the functional makes it easy to recognize that the candidate for a $\Gamma$-limit  is then
\begin{align}\label{ourfunctional}
	\Eahat=\frac{1}{T}\int_0^T \int_{\R^d} \biggl|\bigl(\Fun{\widehat a}-\Fun{a}\bigr)\ast\mu(t)\biggr|^2d\mu(t)(x)dt,
\end{align}
where $\mu$ is a weak solution to the mean-field equation \eqref{meanfield}, as soon as the initial conditions $x_i(0)$ are identically and independently 
distributed according to a compactly supported probability measure $\mu(0)=\mu_0$. 

Although all of this is very natural, several issues  need to be addressed at this point.
The first one is to establish the space where a result of $\Gamma$-convergence may hold and the identification of $a$ can take place.
As the trajectories $t\mapsto x(t)$ do not explore the whole  space in finite time, we expect that such a space may {\it not} be independent of the initial probability measure $\mu_0$, as we clarify immediately.
By Jensen inequality we have
\begin{eqnarray}
\Eahat  &\leq & \frac{1}{T}\int_0^T  \int_{\R^d} \int_{\R^d}  | \widehat a(| x-y|) - a(| x-y|)|^2 | x-y|^2d \mu(t)(x) d \mu(t)(y) dt  \label{est-functional-1}\\
&= & \frac{1}{T}  \int_0^T\int_{\R_+}\bigl|\widehat a(s)-a(s)\bigr|^2 s^2d\varrho(t)(s) dt \label{midpoint1}
\end{eqnarray}
where $\varrho(t)$ is the pushforward of $\mu(t)\otimes\mu(t)$ by the Euclidean distance map
	$d:\R^d\times\R^d\rightarrow\R_+$ 
 	defined by $(x,y)\mapsto d(x,y)=|x-y|$. 
In other words, $\varrho:[0,T]\rightarrow \mathcal{P}_1(\R_+)$ is defined for every Borel set $A\subset\R_+$ as $\varrho(t)(A)=(\mu(t)\otimes\mu(t))\bigl(d^{-1}(A)\bigr)$.
The mapping $t \in [0,T] \mapsto\varrho(t)(A)$ is lower semi-continuous for every open set $A\subseteq\R_+$, and it is upper semi-continuous  for any compact set $A$ (see Lemma \ref{rhosc}).
We may therefore define a time-averaged probability measure $\prerho$ on the Borel $\sigma$-algebra of $\R_+$ by averaging $\varrho(t)$ over $t \in [0,T]$: for any open set $A \subseteq \R_+$ we define
\begin{align}\label{finallyrho}
	\prerho(A):=\frac{1}{T}\int_0^T\varrho(t)(A)dt,
\end{align}
and extend this set function to a probability measure on all Borel sets. Finally we define
\begin{equation}
 \rho(A):=\int_A s^2 d\prerho(s),
 \label{rho}
\end{equation}
for any Borel set  $A\subseteq\R_+$, to take into account the polynomial weight $s^2$ as appearing in \eqref{midpoint1}.
Then one can reformulate \eqref{midpoint1} in a very compact form as follows
\begin{align}\label{midpoint2}
	\Eahat\leq \int_{\R_+}\bigl|\widehat a(s)-a(s)\bigr|^2 d \rho(s)
		=\| \widehat a - a \|^2_{L_2(\mathbb R_+,\rho)}.
\end{align}
Notice that $\rho$ is defined through $\mu(t)$ which depends on the initial probability measure $\mu_0$. 

To establish coercivity of the learning problem
it is essential  to assume that there exists  $c_T>0$ such that also the following additional lower bound holds
\begin{align}\label{eq-coercive}
	c_T \| \widehat a - a \|^2_{L_2(\mathbb R_+,\rho)} \leq \Eahat,
\end{align}
for all relevant $\widehat a \in X \cap  L_2(\mathbb R_+,\rho)$. This crucial assumption eventually determines also the natural space $X \cap  L_2(\mathbb R_+,\rho)$ for the solutions,
which therefore depends on the choice of the initial conditions $\mu_0$. In particular the constant $c_T\geq 0$ might not be non-degenerate for all the choices of $\mu_0$
and one has to pick the initial distribution so that \eqref{eq-coercive} can hold for $c_T >0$. 
In Section \ref{sec:coerc} we show that for some specific choices of $a$ and rather general choices of $\widehat a \in X$ one can construct probability measure-valued trajectories $t \mapsto \mu(t)$ which allow to validate
\eqref{eq-coercive}.\\
In order to ensure compactness of the sequence of minimizers of $\mathcal E^{[a],N}$, we shall need to restrict the sets of possible solutions to classes of the type
\begin{align*}
X_{M,K} = \left\{b \in W^{1}_{\infty}(K) :
 \|b\|_{L_{\infty}(K)} + \|b'\|_{L_{\infty}(K)} \leq M
 \right\},
\end{align*}
where $M>0$ is some predetermined constant and $K \subset \mathbb R_+$ is a suitable compact set.


We now introduce the key property that a family of approximation spaces $V_N$ must possess in order to ensure that the minimizers of the functionals  $\mathcal E^{[a],N}$ over $V_N$ converge to minimizers of $\mathcal E^{[a]}$.

\begin{definition}\label{VNdef}
Let $M > 0$ and $K=[0,2R]$ interval in $\R_+$  be given. We say that a family of closed subsets $V_N \subset X_{M,K}$, $N \in \N$ has the \emph{uniform approximation property} in $L_{\infty}(K)$ if for all $b\in X_{M,K}$ there exists a sequence $(b_N)_{N \in \N}$ converging uniformly to $b$ on $K$ and such that $b_N\in V_N$ for every $N \in \N$.
\end{definition}

We are ready to state the main result of the paper:

\begin{theorem}\label{thm} Assume $a\in X$, fix $\mu_0 \in \mathcal{P}_c(\R^d)$ and let  $K=[0,2R]$ be an interval in $\R_+$ with $R>0$ as in Proposition \ref{pr:exist}.
	Set
	\begin{align*}
	M \geq \|a\|_{L_{\infty}(K)} + \|a'\|_{L_{\infty}(K)}.
	\end{align*}
	For every $N \in \N$, let $x_{0,1}^N,\ldots,x_{0,N}^N$ be i.i. $\mu_0$-distributed and define  $\mathcal E^{[a],N}$ as in \eqref{pirlo} for the solution $\mu^N$ of the equation \eqref{eq:meanfield} with initial datum
	\begin{align*}
	\mu^N_0 = \frac{1}{N}\sum^N_{i = 1} \delta_{x_{0,i}^N}.
	\end{align*}
	For  $N \in \N$, let $V_N\subset X_{M,K}$ be a sequence of subsets with the \emph{uniform approximation property} as in Definition \ref{VNdef} and consider
	\begin{align*}
		\widehat a_N\in\argmin_{\widehat a\in V_N} \Eahatn.
	\end{align*}
	
Then the sequence $(\widehat a_{N})_{N \in \N}$ has a subsequence converging uniformly on $K$ to some continuous function $\widehat a \in X_{M,K}$ such that
$\mathcal E^{[a]}(\widehat a)=0$. \\
 If we additionally assume the coercivity condition \eqref{eq-coercive}, then $\widehat a=a$ in $L_2(\R_+,\rho)$. Moreover, in this latter case, if there exist  rates $\alpha,\beta >0$, constants $C_1,C_2>0$, and a sequence $(a_N)_{N \in \mathbb N}$ of elements $a_N \in V_N$ such that 
\begin{equation}\label{rate1}
 \| a - a_N \|_{L_\infty(K)} \leq C_1 N^{-\alpha},
\end{equation}
and 
\begin{equation}\label{rate2}
 \mathcal W_1(\mu_0^N,\mu_0) \leq C_2 N^{-\beta},
\end{equation}
then there exists a constant $C_3>0$ such that 
\begin{equation}\label{rateapprox}
 \| a - \widehat a_N \|_{L_2(\R_+,\rho)}^2 \leq C_3 N^{-\min\{ \alpha, \beta\}},
\end{equation}
for all $N \in \mathbb N$. In particular, in this case, it is the entire sequence $(\widehat a_N)_{N \in \mathbb N}$ (and not only subsequences) to converge to $a$ in $L_2(\R_+,\rho)$.

\end{theorem}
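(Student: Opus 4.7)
My overall strategy is a $\Gamma$-convergence-style argument: exploit compactness of the minimizers $\widehat a_N$, pair it with a continuity lemma for the functionals under joint kernel/measure perturbation, and then use the uniform approximation property to test the minimality of $\widehat a_N$ against an approximant of $a$. For compactness: each $\widehat a_N$ lies in $X_{M,K}$, so the sequence is uniformly bounded by $M$ and uniformly $M$-Lipschitz on the compact interval $K=[0,2R]$. Arzel\`a--Ascoli produces a subsequence (not relabeled) converging uniformly on $K$ to some continuous $\widehat a$, and lower semicontinuity of the sup-norm and of the Lipschitz constant under uniform convergence gives $\widehat a\in X_{M,K}$. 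The existence of each $\widehat a_N$ is ensured by the same compactness, since $V_N$ is a closed subset of $X_{M,K}$ and $\Eahatn$ is continuous in $\widehat a$ with respect to $L_\infty(K)$-convergence.

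\textbf{Core continuity lemma.} The technical backbone I would prove is the following: if $b_N\to b$ uniformly on $K$ with $b_N,b\in X_{M,K}$, then
\begin{equation*}
|\Ean(b_N)-\Ea{b}| \;\leq\; C\,\|b_N-b\|_{L_\infty(K)} \;+\; C\,\sup_{t\in[0,T]}\mathcal W_1(\mu^N(t),\mu(t)),
\end{equation*}
with $C$ depending only on $M$, $R$ and $\|a\|_{W^1_\infty(K)}$. Using the convolutional forms \eqref{pirlo} and \eqref{ourfunctional}, I would split the difference into two pieces. The \emph{kernel piece} replaces $b_N$ by $b$ inside $\Fun{\cdot}-\Fun{a}$; since $\operatorname{supp}\mu(t)\cup\operatorname{supp}\mu^N(t)\subset B_R$ by Proposition \ref{pr:exist}, only values of $b_N-b$ on $K=[0,2R]$ are probed and $\Fun{b_N}(z)-\Fun{b}(z)=-(b_N-b)(|z|)z$ is controlled by $\|b_N-b\|_{L_\infty(K)}$. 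The \emph{measure piece} replaces $\mu(t)\otimes\mu(t)$ by $\mu^N(t)\otimes\mu^N(t)$ in the double integral; by Kantorovich--Rubinstein it is bounded by a multiple of $\mathcal W_1(\mu^N(t),\mu(t))$, provided the integrand is uniformly Lipschitz in $(x,y)$ on $B_R\times B_R$. This Lipschitz property follows from the $M$-Lipschitzness and $M$-boundedness of the admissible kernels, together with the compactness of the supports. The mean-field limit of Proposition \ref{pr:exist} then kills the Wasserstein term as $N\to\infty$.

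\textbf{Identification of the limit.} Since $M\geq \|a\|_{L_\infty(K)}+\|a'\|_{L_\infty(K)}$, we have $a\in X_{M,K}$; the uniform approximation property of Definition \ref{VNdef} yields $a_N\in V_N$ with $a_N\to a$ uniformly on $K$. By minimality of $\widehat a_N$ on $V_N$,
\begin{equation*}
0\;\leq\;\Ean(\widehat a_N)\;\leq\;\Ean(a_N).
\end{equation*}
Applying the continuity lemma once with $b_N=\widehat a_N\to\widehat a$ and once with $b_N=a_N\to a$, and noting $\Ea{a}=0$, we obtain $\Ea{\widehat a}\leq \Ea{a}=0$, hence $\Ea{\widehat a}=0$. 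Under the coercivity assumption \eqref{eq-coercive} this forces $\|\widehat a - a\|_{L_2(\R_+,\rho)}^2\leq c_T^{-1}\Ea{\widehat a}=0$, so $\widehat a = a$ in $L_2(\R_+,\rho)$.

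\textbf{Quantitative rates and main obstacle.} For the quantitative statement I would chain
\begin{equation*}
c_T\|\widehat a_N-a\|_{L_2(\R_+,\rho)}^2 \;\leq\; \Ea{\widehat a_N} \;\leq\; |\Ea{\widehat a_N}-\Ean(\widehat a_N)| + \Ean(a_N),
\end{equation*}
where the final step used minimality of $\widehat a_N$. The first term is controlled by the continuity lemma and the Wasserstein-stability part of Proposition \ref{pr:exist} as $C\sup_t\mathcal W_1(\mu^N(t),\mu(t))\leq C\mathcal W_1(\mu_0^N,\mu_0)\leq CN^{-\beta}$ by \eqref{rate2}. Writing $\Ean(a_N)\leq |\Ean(a_N)-\Ea{a_N}|+\Ea{a_N}$, the first summand is again $\leq CN^{-\beta}$, while $\Ea{a_N}\leq \|a-a_N\|_{L_2(\R_+,\rho)}^2 \leq C\|a-a_N\|_{L_\infty(K)}^2\leq CN^{-2\alpha}$ by \eqref{midpoint2} and \eqref{rate1}. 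Since $\min\{\beta,2\alpha\}\geq \min\{\alpha,\beta\}$, one obtains \eqref{rateapprox}; because the estimate is not subsequential, the full sequence converges. The principal obstacle throughout is the continuity lemma: clean simultaneous control of the kernel perturbation and the $\mathcal W_1$-perturbation inside a double integral of a product, which hinges on the $W^1_\infty$-uniformity in $X_{M,K}$ and the uniform-in-$N$ containment of the particle supports in $B_R$ provided by Proposition \ref{pr:exist}.
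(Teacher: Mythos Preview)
Your proposal is correct and follows essentially the same route as the paper: compactness in $X_{M,K}$ via Arzel\`a--Ascoli, a continuity lemma of the form $|\Ean(b_N)-\Ea{b}|\le c_1\W_1(\mu_0^N,\mu_0)+c_2\|b_N-b\|_{L_\infty(K)}$ (the paper's Lemma~\ref{lemma-semicontinuous-0}, combined with the stability estimate of Theorem~\ref{uniq} rather than Proposition~\ref{pr:exist}), testing minimality against the uniform approximants $a_N$ of $a$, and then chaining coercivity with the continuity lemma for the rates. One small caveat: the functional is a \emph{nested} integral $\int|\int\cdots\,d\mu(y)|^2 d\mu(x)$, not a plain double integral against $\mu\otimes\mu$, so your ``measure piece'' needs the two-step decomposition the paper carries out (first perturb the inner convolution, then the outer integration), rather than a single Kantorovich--Rubinstein application to a jointly Lipschitz integrand---but this does not change the strategy or the outcome.
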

We  remark that the $L_2(\R_+,\rho)$ used in our results is useful when $\rho$ has positive density on large intervals of $\R_+$.
Notice that the main result, under the validity of the coercivity condition, not only ensures the identification of $a$ on the support of $\rho$, but it also provides a prescribed rate of convergence.
For functions $a$ in $X_{M,K}$ and for finite element spaces $V_N$ of continuous piecewise linear functions constructed on regular meshes of size $N^{-1}$ a simple sequence $(a_N)_{N \in \mathbb N}$  realizing \eqref{rate1} with $\alpha=1$ and $C_1=M$ is the piecewise linear approximation to $a$ which interpolates $a$ on the mesh nodes. For the approximation estimate \eqref{rate2} there are plenty of results
concerning such rates and we refer to \cite{descsc13} and references therein. Roughly speaking, for $\mu_0^N$ the empirical measure obtained by sampling $N$ times independently from $\mu_0$, the bound \eqref{rate2} holds with high probability for a certain $N$ for $\beta$ of order $1/d$ (more precisely see \cite[Theorem 1]{descsc13}), which is a manifestation of the aforementioned curse of dimensionality. While it is in general relatively easy to increase $\alpha$ as the smoothness $a$ increases, and doing so independently of $d$, since $a$ is a function of one variable only, obtaining $\beta>1/d$ is in general not possible unless $\mu_0$ has very special properties, see \cite[Section 4.4 and Section 4.5]{fohavy11}.


\subsection{Numerical implementation of the variational approach }\label{sec:wp3}

The strength of the result from the variational approach followed in Section \ref{sec:wp2} is the total arbitrariness of the sequence $V_N$ except
for the assumed {\it uniform approximation property} and that the result holds - deterministically - with respect to the uniform convergence, which is quite strong.  However,
the condition that the spaces $V_N$ are to be picked as subsets of $X_{M,K}$ requires the prior knowledge of $M \geq \|a\|_{L_{\infty}(K)} + \|a'\|_{L_{\infty}(K)}$. Hence, the finite dimensional optimization \eqref{fdproxy} is not anymore a simple {\it unconstrained} least squares (as implicitly claimed in the paragraph before \eqref{fdproxy}),
but a problem constrained by a uniform bound on both the solution and its gradient. Nevertheless,
as we clarify in Section \ref{sec:num}, for $M>0$ fixed and choosing $V_N$ made of piecewise linear continuous functions, imposing the uniform $L_\infty$ bounds in the least square problem does not constitute a severe difficulty.
Also the tuning of the parameter $M>0$ turns out to be rather simple. In fact, for $N$ fixed the minimizers $\widehat a_N \equiv \widehat a_{N,M}$ have the property that the map
$$
 M \mapsto   \mathcal E^{[a],N}(\widehat a_{N,M})
$$
is monotonically decreasing as a function of the constraint parameter $M$ and it becomes constant for $M\geq M^*$, for $M^*>0$ empirically not depending on $N$. We claim that this special value $M^*$ is indeed the ``right" parameter for the $L_\infty$ bound. For such a choice, we show also numerically that, as expected, if we let $N$ grow, the minimizers $\widehat{a}_N$ approximates better and better the unknown potential $a$. 
\\

Despite the fact that both the tuning of $M>0$ and the constrained minimization over $X_{M,K}$  requiring $L_\infty$ bounds are not severe issues, it would be way more efficient to perform a unconstrained least squares over $X$. In our follow-up paper \cite{bofohamaXX} we extend the approach developed by Binev et al. in \cite{MR2249856,MR2327596} towards universal algorithms for learning regression functions from independent samples drawn according to an unknown probability distribution. This extension presents several challenges including the lack of independence of the samples collected in our framework and the nonlocality of the scalar products of the corresponding least squares. This has a price to pay, i.e., that  the spaces $V_N$ need to be carefully chosen and the result of convergence holds only with high probability.  
For the development of the latter results, we need to address several variational and measure theoretical properties of the model which are considered in details in this first paper as reported below.


\section{Preliminaries}\label{meanfield}
\subsection{Optimal transport and Wasserstein distances}

The space $\mathcal{P}(\R^n)$ is the set of probability measures on $\R^n$, while the space 
$\mathcal{P}_p(\R^n)$ is the subset of $\mathcal{P}(\R^n)$ whose elements $\mu$ have finite $p$-th moment, i.e.,
$\int_{\R^n} |x|^p d\mu(x) < +\infty$.
We denote by $\mathcal{P}_c(\R^n)$ the subset of $\mathcal{P}_p(\R^n)$ which consists of all probability measures with compact support. 
For any $\mu \in \mathcal{P}(\R^{n_1})$ and  a Borel function $f: \R^{n_1} \to \R^{n_2}$, we denote by $f_{\#}\mu \in \mathcal{P}(\R^{n_2})$ the {\it push-forward of $\mu$ through $f$}, defined by
\begin{align*}
f_{\#}\mu(B) := \mu(f^{-1}(B)) \quad \text{ for every Borel set } B \text{ of } \R^{n_2}.
\end{align*}
In particular, if one considers the projection operators $p_1$ and $p_2$ defined on the product space $\R^{n_1} \times \R^{n_2}$, for every $\pi \in \mathcal{P}(\R^{n_1} \times \R^{n_2})$ we call {\it first} (resp., {\it second}) {\it marginal} of $\pi$ the probability measure $p_{1\#}\pi$ (respectively, $p_{2\#}\pi$). Given $\mu \in \mathcal{P}(\R^{n_1})$ and $\nu \in \mathcal{P}(\R^{n_2})$, we denote with $\Gamma(\mu, \nu)$ the family of couplings between $\mu$ and $\nu$, i.e. the subset of all probability measures in $\mathcal{P}(\R^{n_1} \times \R^{n_2})$ with first marginal $\mu$ and second marginal $\nu$.

On the set $\mathcal{P}_p(\R^n)$ we shall consider the following distance, called the Wasserstein or Monge-Kantorovich-Rubinstein distance,
\begin{align}  \label{e_Wp}
\W^p_p(\mu,\nu)=\inf_{\pi \in \Gamma(\mu,\nu)} \int_{\R^{2n}} |x-y|^p d \pi(x,y)\,.
\end{align}
If $p = 1$, we have the following equivalent characterization of the $1$-Wasserstein distance:
\begin{align}\label{dualwass}
\W_1(\mu,\nu)=\sup \left \{ \int_{\R^n} \varphi(x) d (\mu-\nu)(x)  : \varphi \in \Lip(\R^n), \; \Lip_{\R^n}(\varphi) \leq 1 \right \},
\end{align}
where $\Lip_{\R^n}(\varphi)$ stands for the Lipschitz constant of $\varphi$ on $\R^n$. We denote by $\Gamma_o(\mu,\nu)$ the set of optimal couplings for which the minimum is attained, i.e.,
\begin{align*}
\pi \in \Gamma_o(\mu, \nu) \iff \pi \in \Gamma(\mu, \nu) \text{ and } \int_{\R^{2n}} | x - y |^p d \pi(x,y) = \W^p_p(\mu,\nu).
\end{align*}
It is well-known that $\Gamma_o(\mu, \nu)$ is non-empty for every $(\mu,\nu) \in \mathcal{P}_p(\R^n)\times\mathcal{P}_p(\R^n)$, hence the infimum in \eqref{e_Wp} is actually a minimum. For more details, see e.g. \cite{AGS,villani}.

For any $\mu \in \PP(\R^d)$ and $f: \R^d \to \R^d$, the notation $f * \mu$ stands for the convolution of $f$ and $\mu$:
\begin{align*}
(f * \mu)(x) = \int_{\R^d} f(x-y) d\mu(y)\,.
\end{align*}
This function is continuous and finite-valued whenever $f$ is continuous and \emph{sublinear}, i.e., there exists a constant $C > 0$ such that $| f(\xi) | \leq C (1 + |\xi|)$ for all $\xi \in \R^d$.

\subsection{The mean-field limit equation and existence of solutions}

As already stated in the introduction, our learning approach is based on the following underlying \textit{finite time horizon initial value problem}: given $T > 0$ and $\mu_0 \in \PC(\R^d)$, consider a probability measure-valued trajectory $\mu:[0,T]\rightarrow \PP(\R^d)$ satisfying 
\begin{align}\label{eq:contdyn}
\left\{\begin{aligned}
\frac{\partial \mu}{\partial t}(t) &= -\nabla \cdot ((\Fun{a}*\mu(t))\mu(t)) \quad \text{ for } t \in (0,T],\\
\mu(0) &=\mu_0.
\end{aligned}\right.
\end{align}
We consequently give our notion of solution for \eqref{eq:contdyn}.

\begin{definition}
We say that a map $\mu:[0,T]\rightarrow\PP(\R^d)$ is a solution of \eqref{eq:contdyn} with initial datum $\mu_0$ if the following hold:
\begin{enumerate}
\item $\mu$ has uniformly compact support, i.e., there exists $R > 0$ such that $\supp(\mu(t)) \subset B(0,R)$ for every $t \in [0,T]$;
\item $\mu$ is continuous with respect to the Wasserstein distance $\W_1$;
\item $\mu$ satisfies \eqref{eq:contdyn} in the weak sense, i.e., for every $\phi \in \mathcal{C}^{\infty}_c(\R^d;\R)$ it holds
\begin{align*}
\frac{d}{dt} \int_{\R^d} \phi(x) d\mu(t)(x) = \int_{\R^d} \nabla \phi(x) \cdot (\Fun{a}*\mu(t))(x) d\mu(t)(x).
\end{align*}
\end{enumerate}
\end{definition}

The equation \eqref{eq:contdyn} is closely related to the family of ODEs, indexed by $N \in \N$,
\begin{align}\label{eq:discrdyn}
\left\{\begin{aligned}
\dot{x}^N_i(t) &= \frac{1}{N}\sum^N_{j = 1}\Fun{a}(x^N_i(t) - x^N_j(t)) \quad \text{ for } t \in (0,T],\\
x_i^N(0) &= x^N_{0,i},
\end{aligned} \quad i = 1, \ldots, N, \right.
\end{align}
which may be rewritten as 
\begin{align}\label{eq:discr1}
\left\{\begin{aligned}
\dot{x}^N_i(t) &= (\Fun{a}*\mu^N(t))(x^N_i(t)) \\
x^N_i(0) &= x^N_{0,i},
\end{aligned} \quad i = 1, \ldots, N, \right.
\end{align}
for $t\in(0,T]$, by means of the \textit{empirical measure} $\mu^N:[0,T]\rightarrow\PC(\R^d)$ defined as
\begin{align}\label{eq:empmeas}
\mu^N(t) = \frac{1}{N}\sum^N_{j = 1} \delta_ {x^N_j(t)}.
\end{align}
As already explained in the introduction, we shall restrict our attention to interaction kernels belonging to the following \textit{set of admissible kernels}
\begin{align*}
	X=\bigl\{b:\R_+\rightarrow\R\,|\ b \in L_{\infty}(\R_+) \cap W^{1}_{\infty,\loc}(\R_+) \bigr \}.
\end{align*}
The well-posedness of \eqref{eq:discr1} is rather standard under the assumption $a \in X$. The well-posedness of system \eqref{eq:contdyn} and several crucial properties enjoyed by its solutions may also be proved as soon as $a \in X$.
We refer the reader to \cite{AGS} for results on existence and uniqueness of solutions for \eqref{eq:contdyn}, and to  \cite{13-Carrillo-Choi-Hauray-MFL} for generalizations in case of interaction kernels not necessarily belonging to the class $X$. Nevertheless, in the following we recall the main results, whose proofs are collected in the Appendices in order to keep this work self-contained and to allow explicit reference to constants.


\begin{proposition}\label{pr:exist}
Let $\mu_0 \in \PC(\R^d)$ be given. Let $(\mu^{N}_0)_{N \in \N} \subset \PC(\R^d)$ be a sequence of empirical measures of the form
\begin{align*}
\mu^{N}_0 = \frac{1}{N}\sum^N_{i = 1} \delta_{x^{N}_{0,i}}, \quad \text{ for some } x^{N}_{0,i} \in \supp(\mu_0)
\end{align*}
satisfying $\lim_{N \rightarrow \infty} \W_1(\mu_0,\mu^{N}_0) = 0$. For every $N \in \N$, denote with $\mu^N:[0,T] \rightarrow \PP(\R^{d})$ the curve given by \eqref{eq:empmeas} where $(x^N_1,\ldots,x^N_N)$ is the unique solution of system \eqref{eq:discrdyn}.

Then, there exists $R > 0$ depending only on $T,a$, and $\supp(\mu_0)$ such that the sequence $(\mu^N)_{N \in \N}$ converges, up to extraction of subsequences, in $\mathcal{P}_1(B(0,R))$ equipped with the Wasserstein metric $\W_1$ to a solution $\mu$ of \eqref{eq:contdyn} with initial datum $\mu_0$ satisfying
\begin{align*}
\supp(\mu^N(t)) \cup \supp(\mu(t)) \subseteq B(0,R), \quad \text{ for every } N \in \N \text{ and } t \in [0,T].
\end{align*}
\end{proposition}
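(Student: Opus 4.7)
The plan follows the standard mean-field limit strategy: first obtain an a priori bound on the particle positions that confines the dynamics to a fixed compact set; then deduce time-equicontinuity and compactness of the empirical measures in a Wasserstein topology; and finally pass to the limit in the weak formulation of the ODE system. The hard part throughout is the nonlinear (quadratic) dependence of the right-hand side on $\mu^N$ combined with the merely local Lipschitz assumption on $a$: without a uniform support bound, continuity of the functional $\mu\mapsto\iint\nabla\varphi(x)\cdot\Fun{a}(x-y)\,d\mu(x)d\mu(y)$ with respect to $\W_1$ is not automatic, and the a priori estimate is exactly what restores it by reducing $\Fun{a}$ to its restriction to $\overline{B(0,2R)}$, where it becomes globally Lipschitz.

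First I would derive the uniform support bound that defines $R$. Fix $R_0>0$ with $\supp(\mu_0)\subset B(0,R_0)$, so $x^{N}_{0,i}\in B(0,R_0)$ for all $i,N$. Since $|\Fun{a}(z)|=|a(|z|)|\,|z|\leq \|a\|_{L_{\infty}(\R_+)}|z|$, each trajectory satisfies
\begin{align*}
|\dot x^N_i(t)|\leq \frac{1}{N}\sum_{j=1}^N\|a\|_{L_{\infty}(\R_+)}|x^N_i(t)-x^N_j(t)|\leq 2\|a\|_{L_{\infty}(\R_+)}\max_k|x^N_k(t)|.
\end{align*}
Setting $\phi_N(t):=\max_i|x^N_i(t)|$, Gronwall's inequality yields $\phi_N(t)\leq R_0 e^{2\|a\|_{L_{\infty}(\R_+)}T}$, uniformly in $N$ and $t\in[0,T]$. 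Taking $R$ slightly larger then gives $\supp(\mu^N(t))\subset B(0,R)$ for every $N$ and $t$.

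Next I would establish compactness of $(\mu^N)$ in $C([0,T];\PP(\overline{B(0,R)}))$. Using the diagonal coupling $\frac{1}{N}\sum_i \delta_{(x_i^N(s),x_i^N(t))}\in\Gamma(\mu^N(s),\mu^N(t))$,
\begin{align*}
\W_1(\mu^N(s),\mu^N(t))\leq \frac{1}{N}\sum_{i=1}^N|x^N_i(s)-x^N_i(t)|\leq 2R\|a\|_{L_{\infty}(\R_+)}|s-t|,
\end{align*}
so the family is uniformly Lipschitz in time with respect to $\W_1$. Since $\PP(\overline{B(0,R)})$ is compact in the $\W_1$ topology (Wasserstein space over a compact Polish space), Ascoli--Arzel\`a produces a subsequence converging uniformly to a continuous curve $\mu:[0,T]\to\PP(\overline{B(0,R)})$, which automatically inherits the bound $\supp(\mu(t))\subset \overline{B(0,R)}$.

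Finally I would pass to the limit in the weak form of \eqref{eq:discr1}: for $\varphi\in \mathcal{C}^{\infty}_c(\R^d)$,
\begin{align*}
\int\varphi\,d\mu^N(t)-\int\varphi\,d\mu^N_0=\int_0^t\iint \nabla\varphi(x)\cdot\Fun{a}(x-y)\,d\mu^N(s)(x)d\mu^N(s)(y)\,ds.
\end{align*}
The left-hand side converges to $\int\varphi\,d\mu(t)-\int\varphi\,d\mu_0$ by the hypothesis $\W_1(\mu_0^N,\mu_0)\to 0$ and the uniform $\W_1$-convergence of $\mu^N(t)$. For the right-hand side, since only $x,y\in \overline{B(0,R)}$ are relevant, the integrand $(x,y)\mapsto\nabla\varphi(x)\cdot\Fun{a}(x-y)$ is a bounded Lipschitz function on $\overline{B(0,R)}\times\overline{B(0,R)}$, with Lipschitz constant depending on $\|\varphi\|_{\mathcal{C}^2}$, $\|a\|_{L_{\infty}(\R_+)}$, and $\Lip_{[0,2R]}(a)$. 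Stability of $\mu\mapsto\mu\otimes\mu$ in $\W_1$ under uniform compact support, combined with the dual characterization \eqref{dualwass}, then gives pointwise convergence in $s$ of the inner double integral; dominated convergence in $s$ closes the argument and identifies $\mu$ as a weak solution of \eqref{eq:contdyn} with initial datum $\mu_0$.
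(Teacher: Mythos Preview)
Your proposal is correct and follows essentially the same three-step strategy as the paper: a Gronwall a priori bound on $\max_i|x^N_i(t)|$ giving the uniform radius $R$, equi-Lipschitz continuity of $t\mapsto\mu^N(t)$ in $\W_1$ via the diagonal coupling and Ascoli--Arzel\`a, and passage to the limit in the weak formulation. The only cosmetic difference is in the last step: the paper first isolates the convolution and uses a separate lemma to show $\|\Fun{a}*\mu^N(t)-\Fun{a}*\mu(t)\|_{L_\infty(B(0,R))}\leq L\,\W_1(\mu^N(t),\mu(t))$, whereas you handle the bilinear term in one stroke by observing that $(x,y)\mapsto\nabla\varphi(x)\cdot\Fun{a}(x-y)$ is Lipschitz on $\overline{B(0,R)}^2$ and invoking $\mu^N\otimes\mu^N\to\mu\otimes\mu$ in $\W_1$; both routes are equivalent and rest on the same compact-support reduction.
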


 A proof of this standard result is reported in Appendix \ref{ap3} together with the necessary technical lemmas in Appendix \ref{ap1}.

\subsection{The transport map and  uniqueness of mean-field solutions}

Another way for building a solution of equation \eqref{eq:contdyn} is by means of the so-called \textit{transport map}, i.e., the function describing the evolution in time of the initial measure $\mu_0$. The transport map can be constructed by considering the following single-agent version of system \eqref{eq:discr1},
\begin{align}\label{eq:transpdyn}
\left\{\begin{aligned}
\dot{\xi}(t) &= (\Fun{a}*\mu(t))(\xi(t)) \quad \text{ for } t \in (0,T],\\
\xi(0) &= \xi_0,
\end{aligned}\right.
\end{align}
where $\xi$ is a mapping from $[0,T]$ to $\R^d$ and $a \in X$. Here $\mu:[0,T]\rightarrow\PP(\R^d)$ is a continuous map with respect to the Wasserstein distance $\W_1$ satisfying $\mu(0) = \mu_0$ and $\supp(\mu(t)) \subseteq B(0,R)$, for a given $R>0$.

According to  Proposition \ref{exmono}, if $\mu$ is any solution of \eqref{eq:contdyn}, we can consider the family of flow maps $\mathcal{T}^{\mu}_t:\R^d \rightarrow\R^d$, indexed by $t \in [0,T]$ and the mapping $\mu$, defined by
\begin{align*}
\mathcal{T}^{\mu}_t(\xi_0) = \xi(t),
\end{align*}
where $\xi:[0,T]\rightarrow\R^d$ is the unique solution of \eqref{eq:transpdyn} with initial datum $\xi_0$. The by now well-known result \cite[Theorem 3.10]{CanCarRos10} shows that the solution of \eqref{eq:contdyn} with initial value $\mu_0$ is also the unique fixed-point of the \textit{push-foward map}
\begin{align}\label{eq:fixedpoint}
\Gamma[\mu](t) := (\mathcal{T}^{\mu}_t)_{\#}\mu_0.
\end{align}

A relevant, basic property of the transport map is proved in the following

\begin{proposition}\label{p-transportlip}
$\ct^\mu_t$ is a locally bi-Lipschitz map, i.e. it is a locally Lipschitz map, with locally Lipschitz inverse.
\end{proposition}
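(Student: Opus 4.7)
The plan is to analyze the transport map $\ct^\mu_t$ by exploiting the local Lipschitz regularity of the velocity field $v(t,x) := (\Fun{a}*\mu(t))(x)$ and then running a Gronwall estimate both forward and backward in time.

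\textbf{Step 1: local Lipschitz regularity of the velocity field.} First I would show that for each compact set $K' \subset \R^d$ there is a constant $L = L(K',R,\|a\|_{L_\infty},\Lip_{[0,\diam(K')+R]}(a))$ such that
\begin{equation*}
|v(t,x_1) - v(t,x_2)| \leq L\,|x_1-x_2|, \qquad x_1,x_2 \in K',\ t \in [0,T].
\end{equation*}
This follows by writing
\begin{equation*}
v(t,x_1) - v(t,x_2) = -\int_{\R^d}\bigl[a(|x_1-y|)(x_1-y) - a(|x_2-y|)(x_2-y)\bigr]d\mu(t)(y),
\end{equation*}
using $\supp\mu(t)\subseteq B(0,R)$, and estimating the integrand via the triangle inequality, the boundedness of $a$, and its local Lipschitz constant on $[0,\diam(K')+R]$. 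By the same computation (with $x_2=0$), $v(t,\cdot)$ is sublinear with bound uniform in $t$.

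\textbf{Step 2: Lipschitz estimate for $\ct^\mu_t$.} For $\xi_0,\eta_0$ in a bounded set $B$, the sublinearity of $v$ together with Gronwall's inequality confines the trajectories $\xi(\cdot),\eta(\cdot)$ of \eqref{eq:transpdyn} to a common compact set $K'=K'(B,T)$ on $[0,T]$. Subtracting the two ODEs, using Step 1 on $K'$, and applying Gronwall yields
\begin{equation*}
|\ct^\mu_t(\xi_0) - \ct^\mu_t(\eta_0)| = |\xi(t)-\eta(t)| \leq e^{Lt}\,|\xi_0-\eta_0|,
\end{equation*}
which is precisely the local Lipschitz bound on $B$.

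\textbf{Step 3: invertibility and Lipschitz estimate for the inverse.} Given a terminal state $\xi_t = \ct^\mu_t(\xi_0)$, set $\eta(s) := \xi(t-s)$ for $s\in[0,t]$. Then $\eta$ satisfies the time-reversed ODE
\begin{equation*}
\dot\eta(s) = -v(t-s,\eta(s)), \qquad \eta(0)=\xi_t,
\end{equation*}
whose velocity field $(s,x)\mapsto -v(t-s,x)$ enjoys the same local Lipschitz and sublinearity properties as $v$. Hence for any $\xi_t,\eta_t$ in a bounded set the backward flow is well-defined and satisfies a Gronwall estimate identical to the one in Step 2, giving
\begin{equation*}
|(\ct^\mu_t)^{-1}(\xi_t) - (\ct^\mu_t)^{-1}(\eta_t)| \leq e^{Lt}\,|\xi_t-\eta_t|,
\end{equation*}
on the relevant bounded set. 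In particular $\ct^\mu_t$ is injective, surjective onto its range, and bi-Lipschitz on bounded sets.

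The proof is essentially a standard flow-of-ODE argument, so I do not expect any serious obstacle; the only mildly delicate point is ensuring that the Lipschitz constant $L$ in Step 1 is uniform in $t\in[0,T]$, which is where Proposition~\ref{pr:exist} is used to get a uniform support bound $B(0,R)$ for $\mu(t)$.
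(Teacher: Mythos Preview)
Your proposal is correct and follows essentially the same route as the paper: establish uniform-in-$t$ local Lipschitz continuity of the velocity field $(\Fun{a}*\mu(t))(\cdot)$ using the uniform support bound on $\mu(t)$, apply a Gronwall estimate to get the forward Lipschitz bound for $\ct^\mu_t$, and then time-reverse the ODE to obtain the same estimate for the inverse flow. The only cosmetic difference is that the paper packages Step~1 and Step~2 into references to Lemma~\ref{p-Floclip}, Lemma~\ref{p-Fmuloclip}, and Proposition~\ref{le:uniquecara}, whereas you write the estimates out explicitly.
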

\begin{proof}
	
	Let $R>0$ be sufficiently large such that $\supp(\mu_0)\subseteq B(0,R)$.
 The choice $r = R$ in Proposition \ref{le:uniquecara},  Lemma \ref{p-Floclip}, and  Lemma \ref{p-Fmuloclip}  imply the following stability estimate
	\begin{align}\label{eq:liptrans}
		\bigl|\ct^\mu_t(x_0)-\ct^\mu_t(x_1)\bigr|
			\leq e^{T\,\Lip_{B(0,R)}(\Fun{a})}|x_0-x_1|,\quad \text{ for } |x_i|\leq R\,,\quad i=0,1\,.
	\end{align}
	i.e., $\ct^\mu_t$ is locally Lipschitz.
	
	In view of the uniqueness of the solutions to the ODE \eqref{eq:transpdyn}, it is also clear that, for any $t_0\in [0,T]$, the inverse of $\ct^\mu_{t_0}$ is
	given by the transport map associated to the backward-in-time ODE
\begin{align*}
\left\{\begin{aligned}
\dot{\xi}(t) &= (\Fun{a}*\mu(t))(\xi(t)) \quad \text{ for } t \in [0,t_0),\\
\xi(t_0) &= \xi_0.
\end{aligned}\right.
\end{align*}
	However, this problem in turn can be cast into the form of an usual IVP simply by considering the reverse trajectory $\nu_t=\mu_{t_0-t}$. Then
	$y(t)=\xi(t_0-t)$ solves
	\begin{align*}
	\left\{\begin{aligned}
		\dot y(t)&=-\bigl(\Fun{a}\ast\nu(t)\bigr)(y(t))  \quad \text{ for } t \in (0,t_0], \\
		y(0) &= \xi(t_0).
	\end{aligned}\right.
	\end{align*}
	The corresponding stability estimate for this problem then yields that the inverse of $\ct^\mu_t$ exists and is locally Lipschitz (with the same local Lipschitz constant as $\ct^\mu_t$).
\end{proof}

It is also known, see, e.g., \cite{CanCarRos10}, that one has also uniqueness and continuous dependence on the initial data for \eqref{eq:contdyn} (we report a proof of these properties in Appendix \ref{ap3} for completeness):

\begin{theorem}\label{uniq}
Fix $T>0$  and let $\mu:[0,T]\rightarrow\mathcal{P}_1(\R^d)$ and $\nu:[0,T]\rightarrow\mathcal{P}_1(\R^d)$ be two equi-compactly supported solutions  of \eqref{eq:contdyn}, for $\mu(0)=\mu_0$ and $\nu(0)=\nu_0$ respectively. Let $R>0$ be such that for every $t \in[0, T]$
\begin{align}\label{supptot}
\supp(\mu(t))\cup\supp(\nu(t)) \subseteq B(0, R)\,.
\end{align}
Then, there exist a positive constant $\overline{C}$ depending only on $T$, $a$,  and $R$ such that
\begin{equation}\label{stab}
\W_1(\mu(t), \nu(t)) \le \overline{C} \, \W_1(\mu_0, \nu_0)
\end{equation}
for every $t \in [0, T]$. In particular, equi-compactly supported solutions of \eqref{eq:contdyn} are uniquely determined by the initial datum.
\end{theorem}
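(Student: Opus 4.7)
The plan is to leverage the transport-map representation \eqref{eq:fixedpoint}, writing $\mu(t)=(\ct^\mu_t)_{\#}\mu_0$ and $\nu(t)=(\ct^\nu_t)_{\#}\nu_0$ as guaranteed by the discussion preceding Proposition \ref{p-transportlip}. Pick any optimal coupling $\pi_0\in\Gamma_o(\mu_0,\nu_0)$; then the push-forward $\pi_t:=(\ct^\mu_t,\ct^\nu_t)_{\#}\pi_0$ is a (not necessarily optimal) coupling of $\mu(t)$ and $\nu(t)$, so
\begin{equation*}
\W_1(\mu(t),\nu(t))\leq \int_{\R^d\times\R^d}\bigl|\ct^\mu_t(x)-\ct^\nu_t(y)\bigr|\,d\pi_0(x,y)=:\phi(t).
\end{equation*}

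Next I would differentiate along the characteristic ODE \eqref{eq:transpdyn}. Writing
\begin{equation*}
\ct^\mu_t(x)-\ct^\nu_t(y)=(x-y)+\int_0^t\Bigl[(\Fun{a}\!\ast\!\mu(s))(\ct^\mu_s(x))-(\Fun{a}\!\ast\!\nu(s))(\ct^\nu_s(y))\Bigr]ds,
\end{equation*}
I would split the bracketed integrand by adding and subtracting $(\Fun{a}\!\ast\!\mu(s))(\ct^\nu_s(y))$. The first piece is estimated by the local Lipschitz constant of $\Fun{a}\!\ast\!\mu(s)$ on $B(0,R)$ (obtainable via the lemmas already used in the proof of Proposition \ref{p-transportlip}, since $a\in X$ and $\supp\mu(s)\subseteq B(0,R)$). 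The second piece, $(\Fun{a}\!\ast\!\mu(s))(\ct^\nu_s(y))-(\Fun{a}\!\ast\!\nu(s))(\ct^\nu_s(y))=\int \Fun{a}(\ct^\nu_s(y)-z)\,d(\mu(s)-\nu(s))(z)$, is controlled, via the dual formulation \eqref{dualwass}, by $\Lip_{B(0,2R)}(\Fun{a})\cdot\W_1(\mu(s),\nu(s))$.

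Taking absolute values, integrating against $\pi_0$, and using Fubini together with $\W_1(\mu(s),\nu(s))\leq\phi(s)$, I obtain an inequality of the form
\begin{equation*}
\phi(t)\leq \W_1(\mu_0,\nu_0)+L\int_0^t\phi(s)\,ds,
\end{equation*}
where $L$ depends only on $a$ and $R$ (through the two Lipschitz constants above). A direct application of Gronwall's lemma then yields $\phi(t)\leq \W_1(\mu_0,\nu_0)\,e^{LT}$ for every $t\in[0,T]$, giving \eqref{stab} with $\overline{C}=e^{LT}$. Uniqueness with respect to the initial datum follows by setting $\mu_0=\nu_0$.

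The routine steps here are the coupling trick and the Gronwall step; the only real point to check carefully is the uniform Lipschitz bound on $\Fun{a}\!\ast\!\mu(s)$ and on $\Fun{a}$ restricted to the ball of radius $2R$. Since $\Fun{a}(z)=-a(|z|)z$ with $a$ bounded and locally Lipschitz, and the convolution inherits Lipschitz regularity in a standard way on the bounded support of $\mu(s)$, this is supplied by (or is an immediate consequence of) the technical lemmas invoked in Proposition \ref{p-transportlip} and collected in Appendix \ref{ap1}; so I would simply cite those.
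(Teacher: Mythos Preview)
Your argument is correct and relies on the same ingredients as the paper's proof: the transport-map representation $\mu(t)=(\ct^\mu_t)_\#\mu_0$, $\nu(t)=(\ct^\nu_t)_\#\nu_0$, the local Lipschitz bounds on $\Fun{a}$ and on $\Fun{a}\ast\mu(s)$ supplied by Lemmas \ref{p-Floclip}, \ref{p-Fmuloclip}, \ref{p-lipkernel}, and a Gronwall step. The organization, however, is different. The paper does not fix a single coupling and track $\phi(t)$; instead it splits at the Wasserstein level via the triangle inequality
\[
\W_1(\mu(t),\nu(t))\le \W_1((\ct^\mu_t)_\#\mu_0,(\ct^\mu_t)_\#\nu_0)+\W_1((\ct^\mu_t)_\#\nu_0,(\ct^\nu_t)_\#\nu_0),
\]
bounds the first term by the Lipschitz constant of $\ct^\mu_t$ (Lemma \ref{primstim} together with \eqref{eq:liptrans}), bounds the second by $\|\ct^\mu_t-\ct^\nu_t\|_{L_\infty(B(0,R))}$ (Lemma \ref{primstim}), and then controls the latter through \eqref{gronvalla} and Lemma \ref{p-lipkernel} before applying Gronwall. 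Your coupling argument avoids Lemma \ref{primstim} altogether and is arguably more direct, while the paper's decomposition isolates the two effects (same flow acting on different data versus different flows on the same data) more explicitly; both routes yield the same constant up to harmless changes. One small point: when you invoke \eqref{dualwass} for the second piece, the test function $z\mapsto \Fun{a}(\ct^\nu_s(y)-z)$ is only Lipschitz on $B(0,R)$, not globally; since both measures are supported there this is fine, but it is cleaner to cite Lemma \ref{p-lipkernel} directly, which is precisely this estimate.
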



\section{The learning problem for the kernel function}\label{sec:learn}

As already explained in the introduction, our goal is to learn $a \in X$ from observation of the dynamics of $\mu^N$ corresponding to system \eqref{eq:discrdyn} with $a$ as interaction kernel, $\mu_0^N$ as initial datum, and $T$ as finite time horizon.

We pick $\widehat a$ among those functions in $X$ which would give rise to a dynamics close to $\mu^N$: roughly speaking we choose $\widehat a_N \in X$ as a minimizer of the following \textit{discrete error functional}
\begin{align}\label{eq-def-error}
	\begin{split}
	\Eahatn = \frac{1}{T}\int_0^T\frac{1}{N}\sum_{i=1}^N\biggl|\frac{1}{N}\sum_{j=1}^N
			\left(\widehat a(|\x_i(t)-\x_j(t)|)(\x_i(t) - \x_j(t))-\dot{x}^{[a]}_i(t)\right)\biggr|^2 dt.
	\end{split}
\end{align}

Let us remind that, by Proposition \ref{trajapprox}, this optimization guarantees also that any minimizer $\widehat a_N$ produces very good trajectory approximations $x^{[\widehat a_N]}(t)$ to the
``true" ones $\x(t)$ at least at finite time $t \in [0,T]$.

\begin{proof}[Proof of Proposition \ref{trajapprox}]
Let us denote $x=\x $ and $\widehat x =\xahat  $ and we estimate by Jensen or H\"older inequalities
\begin{align*}
\|x(t) - \widehat x(t) \|^2 & = \left \| \int_0^t ( \dot x(s) - \dot{\widehat x}(s)) ds \right \|^2 \leq  t \int_0^t \| \dot x(s) - \dot{\widehat x}(s) \|^2 ds \\
&=  t \int_0^t \frac{1}{N} \sum_{i=1}^N \left | (\Fun{a} * \mu^N(x_i)- \Fun{\widehat a} * \widehat \mu^N(\widehat x_i)) \right |^2 ds\\
&\leq 2 t \int_0^t \Bigg[  \frac{1}{N} \sum_{i=1}^N \left| (\Fun{a} - \Fun{\widehat a}) *  \mu^N( x_i)) \right |^2 \\
&\quad +\Bigg| \frac{1}{N} \sum_{j=1}^N \widehat a(|x_i-x_j|)( (\widehat x_j - x_j) + (x_i-\widehat x_i))  \\
&\quad+ \left(\widehat a(| \widehat x_i-\widehat x_j|) -  \widehat a(| x_i- x_j |)\right) (\widehat x_j - \widehat x_i) \Bigg|^2  \Bigg ] ds \\
&\leq 2 T^2 \Eahatn +  \int_0^t 8 T( \|\widehat a\|_{L_\infty(K)}^2 + (R \operatorname{Lip}_K(\widehat a) ) ^2 )\|x(s) - \widehat x(s) \|^2  ds,
\end{align*}
for $K=[0,2 R]$ and $R>0$ is as in Proposition \ref{pr:exist} for $a$ substituted by $\widehat a$.
An application of Gronwall's inequality yields the estimate
$$
\|x(t) - \widehat x(t) \|^2 \leq 2 T^2   e^{8 T^2( \|\widehat a\|_{L_\infty(K)}^2 + (R \operatorname{Lip}_K(\widehat a) ) ^2)} \Eahatn,
$$
which is the desired bound.
\end{proof}

\subsection{The measure $\prerho$}

In order to rigorously introduce the coercivity condition \eqref{eq-coercive}, we need to explore finer properties of the family of measures $(\varrho(t))_{t \in [0,T]}$, where we recall that 
$\varrho(t)(A)=(\mu(t)\otimes\mu(t))\bigl(d^{-1}(A)\bigr)$ for $A$ a Borel set of $\mathbb{R}_+$.

\begin{lemma}\label{rhosc}
	For every open set $A\subseteq\R_+$ the mapping $t \in [0,T] \mapsto\varrho(t)(A)$ is lower semi-continuous, whereas for
	any compact set $A$ it is upper semi-continuous.
\end{lemma}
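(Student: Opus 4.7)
The plan is to reduce the claim to the Portmanteau theorem by showing that $t\mapsto\varrho(t)$ is continuous with respect to weak convergence of probability measures on $\R_+$. Once this continuity is established, for any open set $A\subseteq\R_+$ the Portmanteau theorem gives
\[
\liminf_{s\to t}\varrho(s)(A)\geq \varrho(t)(A),
\]
i.e.\ lower semi-continuity, while for any closed set $F\subseteq\R_+$ (in particular any compact set, since compact subsets of $\R_+$ are closed)
\[
\limsup_{s\to t}\varrho(s)(F)\leq\varrho(t)(F),
\]
giving upper semi-continuity.

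To obtain the continuity of $\varrho$, I would argue in two steps. First, the curve $t\mapsto\mu(t)\otimes\mu(t)\in\mathcal{P}_1(\R^d\times\R^d)$ is continuous in the Wasserstein metric $\W_1$. Indeed, if $\pi_{s,t}\in\Gamma_o(\mu(s),\mu(t))$ is an optimal coupling, then $\pi_{s,t}\otimes\pi_{s,t}$ is a coupling between $\mu(s)\otimes\mu(s)$ and $\mu(t)\otimes\mu(t)$, yielding
\[
\W_1\bigl(\mu(s)\otimes\mu(s),\,\mu(t)\otimes\mu(t)\bigr)\leq 2\,\W_1(\mu(s),\mu(t))\xrightarrow[s\to t]{}0,
\]
by the assumed $\W_1$-continuity of $\mu$. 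Second, the distance map $d:\R^d\times\R^d\to\R_+$, $(x,y)\mapsto|x-y|$, is Lipschitz (with constant at most $\sqrt 2$ if one endows the product with the Euclidean norm), and for any Lipschitz $f$ pushforward is $\W_1$-continuous with $\W_1(f_\#\mu,f_\#\nu)\leq \Lip(f)\,\W_1(\mu,\nu)$. Applied to $d$ and the pair $\mu(s)\otimes\mu(s)$, $\mu(t)\otimes\mu(t)$, this yields
\[
\W_1(\varrho(s),\varrho(t))\leq \sqrt 2\,\W_1\bigl(\mu(s)\otimes\mu(s),\,\mu(t)\otimes\mu(t)\bigr)\xrightarrow[s\to t]{}0,
\]
so that $\varrho(s)\to\varrho(t)$ in $\W_1$ and hence weakly.

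I do not expect a real obstacle here: both ingredients are standard tools from optimal transport. The only mild care is to verify the two Lipschitz-type estimates (product stability under $\W_1$ and pushforward by a Lipschitz map), but these are immediate from the primal formulation \eqref{e_Wp}. Once the $\W_1$-continuity of $\varrho$ is in hand, the semi-continuity statements follow at once by the Portmanteau characterization of weak convergence.
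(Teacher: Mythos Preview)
Your proposal is correct and follows essentially the same route as the paper: establish that $t\mapsto\varrho(t)$ is weakly continuous, and then invoke the Portmanteau-type characterization of weak convergence to conclude lower semicontinuity on open sets and upper semicontinuity on compact sets. The only cosmetic difference is that you obtain weak continuity via quantitative $\W_1$ estimates (product stability and Lipschitz pushforward), whereas the paper argues it directly by testing against continuous functions, noting that $f\circ d$ is continuous whenever $f$ is and using $\mu(t_n)\otimes\mu(t_n)\rightharpoonup\mu(t)\otimes\mu(t)$; both arguments lead to the same conclusion with comparable effort.
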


\begin{proof}As a first step we show that for every given sequence $(t_n)_{n \in \N}$ converging to $t\in [0,T]$ we have the weak
	convergence $\varrho(t_n)\rightharpoonup\varrho(t)$ for $n \rightarrow \infty$. 
	We first note that $\mu(t_n)\otimes\mu(t_n)\rightharpoonup\mu(t)\otimes\mu(t)$, since $\mu(t_n)\rightharpoonup\mu(t)$ because of the continuity of $\mu(t)$ in the Wasserstein metric $\W_1$.	
%
	This implies the claimed weak convergence $\varrho(t_n)\rightharpoonup\varrho(t)$, since for any
	function $f\in \mathcal{C}(\R_+)$, it holds $f\circ d\in\mathcal{C}(\R^d\times\R^d)$, and hence
	\begin{align*}
		\int_{\R_+}f\,d\varrho(t_n)
			&=\int_{\R^{2d}}(f\circ d)(x,y)d(\mu(t_n)\otimes\mu(t_n))(x,y)\\
			&\stackrel{n\rightarrow\infty}{\longrightarrow}
				\int_{\R^{2d}}(f\circ d)(x,y)d(\mu(t)\otimes\mu(t))(x,y)
			=\int_{\R_+}f\,d\varrho(t).
	\end{align*}
	The claim now follows from general results for weakly* convergent sequences of Radon measures, see e.g. \cite[Proposition 1.62]{AFP00}.
\end{proof}

Lemma \ref{rhosc} justifies the following
\begin{definition}
The probability measure $\prerho$ on the Borel $\sigma$-algebra on $\R_+$ is defined for any Borel set $A \subseteq \R_+$ as follows
\begin{align}\label{eq-rho-4}
	\prerho(A):=\frac{1}{T}\int_0^T\varrho(t)(A)dt.
\end{align}
\end{definition}
Notice that Lemma \ref{rhosc} shows that \eqref{eq-rho-4} is well-defined only for sets $A$ that are open or compact in $\R_+$. This directly implies that $\prerho$ can be extended to any Borel set $A$, since both families of sets provide a basis for the Borel $\sigma$-algebra on $\R_+$. Moreover $\prerho$ is a regular measure on $\R_+$, since Lemma \ref{rhosc} also implies that for any Borel set $A$
\begin{align*}
	\prerho(A) =  \sup\{\prerho(F) : F \subseteq A, \;F \text{ compact}\} = \inf\{\prerho(G) : A \subseteq G, \;G \text{ open}\}\,.
\end{align*}

\vspace{0.3cm}

The measure $\prerho$ measures which - and how much - regions of $\R_+$ (the set of inter-point distances) are explored during the dynamics of the system. Highly explored regions are where our learning process ought to be successful, since these are the areas where we do have enough samples from the dynamics to reconstruct the function $a$.

We now show the absolute continuity of $ \prerho$ w.r.t. the Lebesgue measure on $\R_+$. First of all we observe the following:

\begin{lemma}\label{lemma-AC-1}
	Let $\mu_0$ be absolutely continuous w.r.t. the $d$-dimensional Lebesgue measure $\cl_d$. Then $\mu(t)$ is absolutely continuous w.r.t. $\cl_d$ for every $t\in [0,T]$.
\end{lemma}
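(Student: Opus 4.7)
The plan is to exploit the representation of $\mu(t)$ as a push-forward of $\mu_0$ along the flow map $\mathcal{T}^\mu_t$, together with the bi-Lipschitz property established in Proposition \ref{p-transportlip}. Specifically, from the fixed-point characterization \eqref{eq:fixedpoint} combined with Theorem \ref{uniq} we have
\begin{align*}
\mu(t) = (\mathcal{T}^\mu_t)_{\#}\mu_0 \qquad \text{for all } t\in[0,T],
\end{align*}
so for any Borel set $B\subseteq\R^d$,
\begin{align*}
\mu(t)(B) = \mu_0\bigl((\mathcal{T}^\mu_t)^{-1}(B)\bigr).
\end{align*}
Hence to establish $\mu(t)\ll\cl_d$ it suffices to show that $\cl_d(B)=0$ implies $\mu_0\bigl((\mathcal{T}^\mu_t)^{-1}(B)\bigr)=0$.

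Next I would invoke the classical fact that locally Lipschitz maps between Euclidean spaces of equal dimension send Lebesgue-null sets to Lebesgue-null sets. By Proposition \ref{p-transportlip} the inverse $(\mathcal{T}^\mu_t)^{-1}$ is locally Lipschitz, so for any $\cl_d$-null set $B$ and any compact $K\subset\R^d$ the image $(\mathcal{T}^\mu_t)^{-1}(B\cap K')$ (for a compact $K'$ containing a neighbourhood of $K$) is $\cl_d$-null. Since $\mu_0$ has compact support, restricting $B$ to a sufficiently large ball suffices; alternatively, one may cover $\R^d$ by countably many balls on each of which $(\mathcal{T}^\mu_t)^{-1}$ is Lipschitz with some finite constant and use $\sigma$-additivity.

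Putting these pieces together: given $B$ Borel with $\cl_d(B)=0$, write $(\mathcal{T}^\mu_t)^{-1}(B)=\bigcup_{k\in\N}(\mathcal{T}^\mu_t)^{-1}(B)\cap B(0,k)$; on each ball $(\mathcal{T}^\mu_t)^{-1}$ is Lipschitz, so the image of $B\cap\mathcal{T}^\mu_t(B(0,k))$ under $(\mathcal{T}^\mu_t)^{-1}$ is again a $\cl_d$-null set. Thus $(\mathcal{T}^\mu_t)^{-1}(B)$ is $\cl_d$-null, and by the standing assumption $\mu_0\ll\cl_d$ we conclude $\mu_0\bigl((\mathcal{T}^\mu_t)^{-1}(B)\bigr)=0$, i.e., $\mu(t)(B)=0$.

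The argument is essentially a bookkeeping of standard measure-theoretic facts; the only non-trivial input is the bi-Lipschitz property of $\mathcal{T}^\mu_t$, which is already provided by Proposition \ref{p-transportlip}. No real obstacle is expected, but care must be taken with the \emph{local} nature of the Lipschitz constants, handled by the countable covering described above.
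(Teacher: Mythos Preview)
Your proposal is correct and follows essentially the same route as the paper: represent $\mu(t)$ as the push-forward of $\mu_0$ under $\mathcal{T}^\mu_t$, then use that the locally Lipschitz inverse $(\mathcal{T}^\mu_t)^{-1}$ sends $\cl_d$-null sets to $\cl_d$-null sets, and conclude via $\mu_0\ll\cl_d$. The only difference is cosmetic: the paper observes at the outset that both $\mu_0$ and $\mu(t)$ are supported in the fixed ball $B(0,R)$ from \eqref{Rest}, so the Lipschitz property on $B(0,R)$ suffices directly and the countable-covering detour you describe is unnecessary (though you do note this shortcut yourself).
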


\begin{proof}
Both $\mu_0$ and $\mu(t)$ are supported in $B(0,R)$, with $R$ as in \eqref{Rest}. The measure $\mu(t)$ is the pushforward of $\mu_0$ under the locally bi-Lipschitz map $\ct^\mu_t$, see Proposition \ref{p-transportlip}. Since $\ct^\mu_t$ has Lipschitz inverse on $B(0,R)$, this inverse maps $\cl_d$-null sets to $\cl_d$-null sets, so $\mu_0$-null sets are not only $\cl_d$-null sets by assumption, but are also $\mu(t)$-null sets.
\end{proof}

\begin{lemma}\label{le-abs}
	Let $\mu_0$ be absolutely continuous w.r.t. $\cl_d$. Then, for all $t\in [0,T]$, the measures $\varrho(t)$ and $\prerho$ are absolutely
	continuous w.r.t. $\cl_1\llcorner_{\R_+}$ (Lebesgue measure in $\R$ restricted to $\R_+$).
\end{lemma}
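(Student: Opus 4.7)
The plan is to reduce everything to a Fubini-type computation in $\R^{2d}$ using the fact that, by the previous Lemma \ref{lemma-AC-1}, $\mu(t)$ is absolutely continuous with respect to $\cl_d$ for every $t\in[0,T]$.

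First I would observe that if $\mu(t)\ll\cl_d$, then the product measure $\mu(t)\otimes\mu(t)$ is absolutely continuous with respect to $\cl_{2d}$ on $\R^d\times\R^d$: indeed, if $\mu(t)=f_t\cl_d$ with $f_t\in L_1(\R^d)$, then $\mu(t)\otimes\mu(t)$ has the tensor-product density $(x,y)\mapsto f_t(x)f_t(y)$ with respect to $\cl_{2d}$. Consequently it suffices to prove that whenever $A\subseteq\R_+$ satisfies $\cl_1(A)=0$, the preimage $d^{-1}(A)=\{(x,y)\in\R^d\times\R^d:|x-y|\in A\}$ has $\cl_{2d}$-measure zero; this gives $\varrho(t)(A)=(\mu(t)\otimes\mu(t))(d^{-1}(A))=0$.

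The core step is this $\cl_{2d}$-null claim, which I would prove by Fubini and spherical coordinates. For every fixed $y\in\R^d$, the slice $\{x\in\R^d:|x-y|\in A\}$ is a rotationally symmetric set around $y$, and passing to spherical coordinates one obtains
\begin{align*}
\cl_d\bigl(\{x\in\R^d:|x-y|\in A\}\bigr)=d\,\omega_d\int_A r^{d-1}\,d\cl_1(r)=0,
\end{align*}
where $\omega_d$ is the volume of the unit ball in $\R^d$. Integrating this identically-zero function against $\cl_d$ in $y$ and invoking Fubini yields $\cl_{2d}(d^{-1}(A))=0$, as required. This shows $\varrho(t)\ll\cl_1\llcorner_{\R_+}$ for every $t\in[0,T]$.

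For the absolute continuity of $\prerho$, I would argue as follows. Given a Borel set $A\subseteq\R_+$ with $\cl_1(A)=0$, approximate $A$ from outside by open sets $G_n\supseteq A$ with $\cl_1(G_n)\to 0$; by Lemma \ref{rhosc} the functions $t\mapsto \varrho(t)(G_n)$ are lower semicontinuous, hence Lebesgue measurable, so
\begin{align*}
\prerho(A)\leq\prerho(G_n)=\frac{1}{T}\int_0^T\varrho(t)(G_n)\,dt,
\end{align*}
and since the previous step gives $\varrho(t)(G_n)\to 0$ as $n\to\infty$ for each fixed $t$ (one applies the argument to the null sets $\bigcap_n G_n\setminus A$ of residual mass), we conclude $\prerho(A)=0$ by dominated convergence, using that $\varrho(t)(G_n)\leq 1$ uniformly. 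There are no serious obstacles in this proof; the only delicate point is the Fubini/spherical-coordinates computation for $d^{-1}(A)$, and care with the measurability of $t\mapsto\varrho(t)(G_n)$, both of which are standard.
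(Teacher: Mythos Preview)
Your proof is correct and follows the same approach as the paper: reduce to $\cl_{2d}(d^{-1}(A))=0$ via Fubini (the paper omits the spherical-coordinates detail you supply), then transfer to $\prerho$ by averaging in $t$. The paper is terser on both steps; your only slightly awkward spot is the parenthetical about ``$\bigcap_n G_n\setminus A$'', which would read more cleanly as an appeal to continuity from above of the finite measure $\varrho(t)$ along a decreasing sequence of open sets with $\cl_1$-null intersection.
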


\begin{proof}
	Fix $t\in [0,T]$. By Lemma \ref{lemma-AC-1} we already know that $\mu(t)$ is absolutely continuous w.r.t.
	$\cl_d$, and so $\mu(t)\otimes\mu(t)$ is absolutely continuous w.r.t. $\cl_{2d}$. It hence
	remains to show that $\cl_{2d}$ is absolutely continuous w.r.t. $\cl_1\llcorner_{\R_+}$, where $d$ is the distance function,
	but this follows easily by observing that $d^{-1}(A)=0$ for every $\cl_1\llcorner_{\R_+}$-null set $A$, and an application of Fubini's theorem.	
	The absolute continuity of $\prerho$ now follows immediately from the one of $\varrho(t)$ for every $t$ and its
	definition as an integral average \eqref{eq-rho-4}.
\end{proof}

As an easy consequence of the fact that the dynamics of our system has support uniformly bounded in time, we get the following crucial properties of the measure $\prerho$.

\begin{lemma}\label{rhocompact} Let $\mu_0 \in \mathcal P_c(\mathbb R^d)$. Then
	the measure $\prerho$ is finite and has compact support.
\end{lemma}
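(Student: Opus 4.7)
The plan is to exploit the uniform-in-time bound on the supports of $\mu(t)$ provided by Proposition \ref{pr:exist} and push it through the construction of $\prerho$ step by step: first to $\mu(t)\otimes\mu(t)$, then to $\varrho(t)$ via the distance map, and finally to $\prerho$ by time averaging.

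First I would invoke Proposition \ref{pr:exist} (applied with the given $\mu_0\in\mathcal{P}_c(\mathbb{R}^d)$): there exists $R>0$, depending only on $T$, $a$, and $\supp(\mu_0)$, such that $\supp(\mu(t))\subseteq B(0,R)$ for every $t\in[0,T]$. Consequently $\supp(\mu(t)\otimes\mu(t))\subseteq B(0,R)\times B(0,R)$, and since the Euclidean distance map $d(x,y)=|x-y|$ satisfies $d(B(0,R)\times B(0,R))\subseteq[0,2R]$, one obtains $\supp(\varrho(t))\subseteq[0,2R]$ for every $t\in[0,T]$.

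For finiteness, since $\mu(t)\otimes\mu(t)$ is a probability measure and $\varrho(t)$ is its push-forward, $\varrho(t)$ is again a probability measure, so $\varrho(t)(\mathbb{R}_+)=1$ for every $t$. Plugging this into the definition \eqref{eq-rho-4} gives
\begin{align*}
\prerho(\mathbb{R}_+)=\frac{1}{T}\int_0^T \varrho(t)(\mathbb{R}_+)\,dt = 1,
\end{align*}
so $\prerho$ is a probability measure, in particular finite.

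For the support statement, let $A\subseteq\mathbb{R}_+$ be the open set $\mathbb{R}_+\setminus[0,2R]$ (or more generally any open set disjoint from $[0,2R]$). By the first step, $\varrho(t)(A)=0$ for every $t\in[0,T]$, and hence $\prerho(A)=0$ by \eqref{eq-rho-4}. Thus the complement of $[0,2R]$ is $\prerho$-negligible, which yields $\supp(\prerho)\subseteq[0,2R]$; in particular $\supp(\prerho)$ is compact. I do not expect any real obstacle here: the only subtlety worth checking is that the definition \eqref{eq-rho-4} makes sense on the open set $A$ (which it does, by Lemma \ref{rhosc} and the extension discussed after it), and that the pointwise bound $\supp(\varrho(t))\subseteq[0,2R]$ indeed holds uniformly in $t$, which is exactly the content of Proposition \ref{pr:exist}.
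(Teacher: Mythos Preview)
Your proposal is correct and follows essentially the same route as the paper: invoke the uniform support bound from Proposition \ref{pr:exist}, push it through the distance map to get $\supp\varrho(t)\subseteq[0,2R]$ for all $t$, and conclude $\supp\prerho\subseteq[0,2R]$ by time averaging. Your finiteness argument is in fact slightly cleaner than the paper's: you directly observe that $\varrho(t)$ is a probability measure, hence $\prerho(\R_+)=1$, whereas the paper inserts an integral expression for $\varrho(t)(\R_+)$ that does not quite match the push-forward definition (though its conclusion is of course still valid).
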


\begin{proof}
We have
\begin{align*}
\begin{split}
\prerho(\R_+)&= \frac{1}{T}\int_0^T \varrho(t)(\R_+)dt 
= \frac{1}{T}\int_0^T \int_{\R^d \times \R^d} |x - y| d\mu(t)(x) d \mu(t)(y)dt
<+\infty,
\end{split}
\end{align*}
since the distance function is continuous and the support of $\mu$ is uniformly bounded in time. This shows that $\prerho$ is bounded.
Since the supports of the measures $\varrho(t)$ are the subsets of
$K=\{|x-y|:x,y\in B(0,R)\} = [0,2R]$, where $R$ is given by \eqref{Rest}, by construction we also have $\supp \prerho\subseteq K$.
\end{proof}

\begin{remark}
	While absolute continuity of $\mu_0$ implies the same for $\prerho$, the situation is different for purely atomic
	measures $\mu_0^N$: then $\mu^N(t)$ is also purely atomic for every $t$, and so it is
	$\varrho^N(t) = d_\# (\mu^N(t) \otimes \mu^N(t))$. However $\prerho$ is in general not purely atomic, due to the averaging in time in its definition \eqref{eq-rho-4}. 
	For
	example, one obtains
	\begin{align*}
		\frac{1}{T}\int_0^T\delta(t) dt=\frac{1}{T}\cl_1\llcorner_{[0,T]}\,,
	\end{align*}
	as becomes immediately clear when integrating a continuous function against those kind of measures.
\end{remark}

\subsection{On the coercivity assumption}\label{sec:coerc}

With the measure $\prerho$ at disposal, we define, as in \eqref{rho},  $$\rho(A) = \int_{A} s^2 d\prerho(s)$$ for all Borel sets $A \subset \R_+$.  An easy consequence of Lemma \ref{rhocompact} is that if $a\in X$, then
\begin{align}\label{eq:inftyimplyl2}
\|a\|^2_{L_2(\R_+,\rho)} = \int_{\R_+} \bigl|a(s)\bigr|^2 d\rho(s) \leq \rho(\mathbb R_+)\|a\|^2_{L_{\infty}(\supp(\rho))}\,,
\end{align}
and therefore $X\subseteq L_2(\R_+,\rho)$.  
As already mentioned in the introduction, for $N \to \infty$ a natural mean-field approximation to the learning functional is given by
\begin{align*}
	\Eahat=\frac{1}{T}\int_0^T \int_{\R^d} \biggl|\bigl((\Fun{\widehat a}-\Fun{a})\ast\mu(t)\bigr)(x)\biggr|^2d\mu(t)(x)dt,
\end{align*}
where $\mu(t)$ is a weak solution to \eqref{eq:contdyn}. 
By means of $\rho$, we recall the estimate from the Introduction
\begin{align}
\begin{split}\label{eq-rho-3}
	\Eahat&\leq\frac{1}{T}\int_0^T\int_{\R_+}\bigl|\widehat a(s)-a(s)\bigr|^2 s^2 d\varrho(t)(s) dt = \|\widehat a-a\|^2_{L_2(\R_+,\rho)}.
\end{split}
\end{align}
This inequality suggested in turn the coercivity condition \eqref{eq-coercive}: 
\begin{align*}
	\Eahat\geq c_T\|\widehat a-a\|^2_{L_2(\R_+,\rho)}.
\end{align*}
The main reason this condition is of interest to us is:

\begin{proposition}\label{uniquemin}
Assume $a \in X$ and that the coercivity condition \eqref{eq-coercive} holds. Then any minimizer of $\mathcal E^{[a]}$ in $X$ coincides $\rho$-a.e. with $a$.
\end{proposition}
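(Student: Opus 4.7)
The proof is quite short and essentially immediate from the tools already set up. The plan has three small steps.

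First, I would observe that $a$ itself is a minimizer of $\mathcal{E}^{[a]}$ over $X$. Indeed, $\mathcal{E}^{[a]}(\widehat a) \geq 0$ for every $\widehat a \in X$ because it is the time-space average of a squared modulus, and directly from the definition
\begin{align*}
\mathcal{E}^{[a]}(a) = \frac{1}{T}\int_0^T \int_{\R^d} \bigl|(\Fun{a}-\Fun{a}) \ast \mu(t)\bigr|^2 d\mu(t)(x)\,dt = 0.
\end{align*}
Hence the minimum of $\mathcal{E}^{[a]}$ on $X$ equals $0$, and is attained at $a$.

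Second, let $\widehat a \in X$ be any minimizer. Then $\mathcal{E}^{[a]}(\widehat a) = 0$ by the previous step. Applying the coercivity assumption \eqref{eq-coercive} to $\widehat a$ gives
\begin{align*}
c_T \|\widehat a - a\|^2_{L_2(\R_+,\rho)} \leq \mathcal{E}^{[a]}(\widehat a) = 0,
\end{align*}
and since $c_T > 0$ this forces $\|\widehat a - a\|_{L_2(\R_+,\rho)} = 0$, i.e., $\widehat a = a$ as elements of $L_2(\R_+,\rho)$, which is exactly equality $\rho$-a.e.

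Third, I would mention (if desired) that the inclusion $X \subseteq L_2(\R_+,\rho)$ required to apply \eqref{eq-coercive} to any $\widehat a \in X$ has already been established in \eqref{eq:inftyimplyl2}, together with the fact that $\rho$ has compact support and finite mass by Lemma \ref{rhocompact}, so the coercivity estimate is meaningful for every competitor in $X$. There is no genuine obstacle here; the proposition is simply the statement that coercivity plus nonnegativity, together with $\mathcal{E}^{[a]}(a)=0$, identifies the minimizer uniquely in the $L_2(\R_+,\rho)$-sense.
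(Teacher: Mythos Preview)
Your proof is correct and follows essentially the same approach as the paper: observe that $\mathcal{E}^{[a]}(a)=0$ and $\mathcal{E}^{[a]}\geq 0$ so $a$ is a minimizer, then apply the coercivity inequality \eqref{eq-coercive} to conclude that any other minimizer agrees with $a$ in $L_2(\R_+,\rho)$. Your third step, noting the inclusion $X\subseteq L_2(\R_+,\rho)$ via \eqref{eq:inftyimplyl2}, is a helpful extra justification that the paper leaves implicit.
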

\begin{proof}
Notice that $\mathcal E^{[a]}(a)=0$, and since $\Eahat\geq 0$ for all $\widehat a\in X$ this implies that $a$ is a minimizer of $\mathcal E^{[a]}$. Now suppose that $ \Eahat=0$ for some $\widehat a\in X$. By \eqref{eq-coercive} we obtain that $\widehat a=a$ in $L_2(\R_+,\rho)$, and therefore they coincide $\rho$-almost everywhere. 
\end{proof}

\subsubsection{Coercivity is ``generically'' satisfied}\label{randomod}
We make the case that while ``degeneracies'' would cause our coercivity condition to fail, i.e., $c_T=0$, in a ``generic'' case the coercivity inequality holds.
On the one hand, we show that if we could model the misfit $\mathcal K(r)=(a(r)- \widehat a(r))r$ to behave randomly, in a sufficiently independent manner, over a finite set of trajectory distances, then
the coercivity condition holds with high probability. While the needed independence assumptions will typically be  too strong to be applicable in practice, the arguments we provide are by far not the most general possible, and we view them as one possible notion of a ``generic'' case.
On the other hand, in the next section we also present a more rigorous {\it deterministic} argument to verify the coercivity condition for very particular choices of $a$.

With the notation of the misfit just introduced, the coercivity condition reads
\begin{align*}
\frac1T\int_0^T\int_{\R^d}&\left|\int_{\R^d} \mathcal K(|x-y|)\frac{x-y}{|x-y|}d\mu(t)(x)\right|^2d\mu(t)(y) \\
&\ge\frac{c_T}{T}\int_0^T\int_{\R^d}\int_{\R^d}\left| \mathcal K(|x-y|)\right|^2d\mu(t)(x)d\mu(t)(y)\,.
\end{align*}
If the inequality holds without the time average for a fixed $t_0$,
\begin{align*}
\int_{\R^d}\left|\int_{\R^d} \mathcal K(|x-y|)\frac{x-y}{|x-y|}d\mu(t_0)(x)\right|^2d\mu(t_0)(y)
\ge c_{t_0}'\int_{\R^d}\int_{\R^d}\left| \mathcal K(|x-y|)\right|^2d\mu(t_0)(x)d\mu(t_0)(y)\,,
\end{align*}
then by a continuity argument it can be extended to a nontrivial time interval. We will therefore freeze time and investigate the inequality at this fixed time. 
Additionally, for the moment we restrict our attention to the case where $\mu(t_0)$ is a discrete measure $\mu^N=\frac1N\sum_{i=1}^N\delta_{x_i}$ (we drop $t_0$ since it is now fixed), so that the inequality reads
\begin{align}
\frac1N\sum_{i=1}^N\left|\frac{1}{N} \sum_{j=1}^N  \mathcal K(|x_i-x_j|)\frac{x_i-x_j}{|x_i-x_j|}\right|^2\ge \frac{c'_{t_0}}{N^2}\sum_{i=1}^N\sum_{j=1}^N\left| \mathcal K(|x_i-x_j|)\right|^2\,.
\label{e:coercivitydiscrete}
\end{align}
We argue now that this (``instantaneous'') inequality holds with high probability as soon as the matrix $\mathbf{K}:=(\mathcal K(|x_i-x_j|))_{i,j=1,\dots,N}$ is modeled as a random matrix.
Although it is not completely plausible to argue statistical independence of the entries of such a matrix because it comes from evaluating a smooth function over distances of non-random points, this model is not completely unreasonable: after all $\mathbf K$ involves the difference of our estimator $\widehat a$ and the target influence function $a$. For least squares estimators this difference is random with the samples used to construct the estimator, often with nearly independent, perhaps even Gaussian, fluctuations. 
We assume that $\mathbf K$ has independent Gaussian rows, each with variance $\sigma^2I_N$. 
Since the bounds we wish to obtain, and our estimates below, are scale invariant, we may, and will, assume $\sigma=1$.
We now show that the coercivity assumption is satisfied, with a constant $c'_{t_0}=O(1/N)$. Let $\mathbf{X}_i\in\mathbb{R}^{N\times d}$ be the matrix whose $j$-th row is the (fixed) vector $\frac{x_i-x_j}{|x_i-x_j|}\in\mathbb{R}^d$, and let $\mathbf{K}(i,:)\in\mathbb{R}^N$ be the $i$-th row of $\mathbf{K}$. The coercivity inequality \eqref{e:coercivitydiscrete} may be re-written as:
\begin{align}
\frac 1N\sum_{i=1}^N\left|\frac1N\mathbf{K}(i,:)\mathbf{X}_i\right|^2\ge\frac{c'_t}{N^2}\|\mathbf{K}\|^2_{\mathbb{F}}\,.
\label{e:coermatrix}
\end{align}
Then we estimate
\begin{align*}
\mathbb{E}\left[|\mathbf{K}(i,:)\mathbf{X}_i|^2\right]
&=\sum_{l=1}^d\sum_{j,j'=1}^N\mathbb{E}\left[\mathcal{K}(|x_i-x_j|)\mathcal{K}(|x_i-x_{j'}|)|\right] \left(\frac{x_i-x_j}{|x_i-x_j|}\right)_l \left(\frac{x_i-x_{j'}}{|x_i-x_{j'}|}\right)_l\\
&=\sum_{l=1}^d\sum_{j=1}^N \mathbb{E}\left[\mathcal{K}(|x_i-x_j|)^2\right] \left(\frac{x_i-x_j}{|x_i-x_j|}\right)^2_l \\
&= \sum_{j=1}^N |\mathbf{X}_i(j,:)|^2 = \| \mathbf{X}_i\|^2_{\mathbb{F}}=N\,,
\end{align*}
where we used independence, 
and in the last step we used the fact that every row of $\mathbf{X}_i$ is a unit vector. By concentration one readily obtains that with high probability
\begin{align*}
\frac 1N\sum_{i=1}^N\left|\frac1N\mathbf{K}(i,:)\mathbf{X}_i\right|^2\ge\frac CN\,.
\end{align*}
One the other hand, since $\mathbb{E}[||\mathbf{K}||^2_{\mathbb{F}}]\le CN^2$ by standard random matrix theory results (e.g. \cite{Vershynin:NARMT}), and in fact not just in expectation but also with high probability, the right hand side of \eqref{e:coermatrix} is bounded by $c'_{t_0}C$ from above. Choosing $c'_{t_0}$ small enough (and at least as small as $O(1/N)$, as a function of $N$), we obtain \eqref{e:coermatrix} with high-probability.

The argument  may be generalized to other models of random matrices, for example with sub-Gaussian rows (for $\mathbf{K}$) and uniformly lower-bounded smallest singular values. One may also consider $\mathbf{X}_i$ random, sufficiently uncorrelated with $\mathbf{K}$, and obtain similar results. Also, the continuous case is not substantially different from the discrete case, as it may be derived by smoothing discrete approximations. We do not pursue these generalizations, as our purpose here is to show that the coercivity assumption is  ``generically'' satisfied.
A model where the behavior of the coercivity constant $c'_t$ would be quite different as $N$ grows, is the following: we assume that $\mathcal{K}(|x_i-x_j|)$ is distributed as $\frac{\eta_{ij}}{|x_i-x_j|^\alpha}$, where $\eta_{ij}$ are i.i.d. standard normal distributions, and furthermore we assume that as $N$ grows the quantity $\frac1N\sum_{j=1}^N |x_i-x_j|^{-\alpha}$ grows as $N^{\gamma-1}$, for some $\gamma\ge1$, and for every $i=1,\dots,N$ fixed. Repeating the calculation above we obtain that the coercivity condition holds with constant that scales as $O(N^{\gamma-1})$, in particular is $O(1)$ independently of $N$ for $\gamma=1$. The first assumption may be motivated that estimators of the influence function may have performance proportional to the gradient of the influence function itself, and such gradient may decay with distance; the second assumption is about the scaling of the ``bulk'' of the system as $N$ grows: for $\gamma=0$ such size is independent of $N$, for $\gamma>0$ it grows with $N$. Note that the case $\gamma=1$ is indeed very natural: the quantity $\frac1N\sum_{j=1}^N |x_i-x_j|^{-\alpha}$ is expected to approach the corresponding integral in the mean-field limit, which is independent of $N$. Under this natural scaling, the coercivity constant is independent of $N$, suggesting it holds in the limit as well.

\subsubsection{The deterministic case}
We construct now  deterministic examples of trajectories $t \to \mu(t)$ for which the  coercivity condition \eqref{eq-coercive} holds.
We start with the simple case of two particles, i.e., $N=2$, for which no specific assumptions on $a,\widehat a$ are required to verify \eqref{eq-coercive} other than their boundedness in $0$. Again it is convenient to write $\mathcal K(r) = (a(r) - \widehat a(r)) r$, so that the coercivity condition in this case can be reformulated as 
\begin{equation}\label{coercN2}
\frac{1}{T} \int_0^T \frac{1}{N} \sum_{i=1}^N \left | \frac{1}{N} \sum_{j=1}^N \mathcal K(|x_i-x_j|) \frac{x_i-x_j}{|x_i-x_j|} \right |^2 dt \geq \frac{c_T}{N^2T} \int_0^T  \sum_{i=1}^N \sum_{j=1}^N |\mathcal K(|x_i-x_j|)|^2  dt.
\end{equation}
Now, let us observe more closely the integrand on the left-hand-side, and for $\widehat i \neq i$, $i,  \widehat i \in \{1,2\}$ and $N=2$, and we obtain
\begin{eqnarray*}
\frac{1}{2} \sum_{i=1}^2 \left | \frac{1}{2} \sum_{j=1}^2 \mathcal K(|x_i-x_j|) \frac{x_i-x_j}{|x_i-x_j|} \right |^2 &=& \frac{1}{2} \sum_{i=1}^2 \left | \frac{1}{2} \sum_{j\neq i}^2 \mathcal K(|x_i-x_j|) \frac{x_i-x_j}{|x_i-x_j|} \right |^2 \\
&=&  \frac{1}{4} \sum_{i=1}^2 \left |  \mathcal K(|x_i-x_{\widehat i}|) \frac{x_i-x_{\widehat i}}{|x_i-x_{\widehat i}|} \right |^2\\
&=& \frac{1}{4} \sum_{i=1}^2 \left |  \mathcal K(|x_i-x_{\widehat i}|) \right |^2=\frac{1}{4}  \sum_{i=1}^2 \sum_{j=1}^2 |\mathcal K(|x_i-x_j|)|^2.
\end{eqnarray*}
Integrating over time the latter equality yields \eqref{coercN2} for $N=2$ with an actual equal sign and $c_T=1$. Notice that here we have not made any specific assumptions on the trajectories $t \mapsto x_i(t)$. 
Let us then consider the case of $N=3$ particles. Already in this simple case the angles between particles may be rather arbitrary and analyzing the many possible configurations becomes an involved exercise. (Notice that we circumvented this problem in the random model in Section \ref{randomod} thanks to the assumed independence of the entries of the rows of $\mathbf K$.)
To simplify the problem we assume that $d=2$  and that at a certain time $t$ the particles are disposed precisely at the vertexes of a equilateral triangle of edge length $r$. This makes the computation of the angles very simple. We also assume that $\mathcal K$ gets its maximal absolute value precisely at $r$, hence
$$
\frac{1}{9}   \sum_{i=1}^3 \sum_{j=1}^3 |\mathcal K(|x_i-x_j|)|^2  \leq \|\mathcal K\|_\infty^2 = \mathcal K(r)^2.
$$
Notice that, independently of the behavior of the particles at any other time $t \in [0,T]$, it holds also
\begin{equation}\label{maxbound}
\frac{1}{9 T} \int_0^T  \sum_{i=1}^3 \sum_{j=1}^3 |\mathcal K(|x_i-x_j|)|^2  dt \leq \|\mathcal K\|_\infty^2 = \mathcal K(r)^2.
\end{equation}
A direct computation in this case of particles disposed at the vertexes of a equilateral triangle shows that 
$$
 \frac{1}{3} \sum_{i=1}^3 \left | \frac{1}{3} \sum_{j=1}^3 \mathcal K(|x_i-x_j|) \frac{x_i-x_j}{|x_i-x_j|} \right |^2 =\frac{1}{3}  \mathcal K(r)^2,
$$
and therefore
$$ 
\frac{1}{3} \sum_{i=1}^3 \left | \frac{1}{3} \sum_{j=1}^3 \mathcal K(|x_i-x_j|) \frac{x_i-x_j}{|x_i-x_j|} \right |^2 \geq \frac{1}{18}   \sum_{i=1}^3 \sum_{j=1}^3 |\mathcal K(|x_i-x_j|)|^2.
$$
Unfortunately the assumption that $\mathcal K$ achieves its maximum in absolute value at $r$ does not  allow us yet to conclude by a simple integration over time the coercivity condition as we did for the case of two particles. In order to extend the validity of the inequality to arbitrary functions taking maxima at other points, we need to integrate over time by assuming now that the particles are vertexes of equilateral triangles with time dependent edge length, say from $r=0$ growing in time up to $r=2 R>0$. This will allow the trajectories to explore any possible distance within a given interval and to capture the maximal absolute value of  any kernel. More precisely, let us now assume that $\mathcal K$ is an arbitrary bounded continuous function,  achieving its maximal absolute value over $[0,2R]$, say at $r_0 \in (0,2R)$ and we can assume that this is obtained corresponding to the time $t_0$ when the particles form precisely the equilateral triangle of side length $r_0$. Now we need to make a stronger assumption on $\widehat a$, i.e., we require $\widehat a$ to belong to a class of equi-continuous functions, for instance functions which are Lipschitz continuous with uniform Lipschitz constant (such as the functions in $X_{M,K}$).
Under this equi-continuity assumption, there exist $\varepsilon>0$ and a constant $c_{T,\varepsilon}>0$ independent of $\mathcal K$ (but perhaps depending only on its modulus of continuity) such that
\begin{eqnarray*}\label{coercint}
&&\frac{1}{T} \int_0^T \frac{1}{3} \sum_{i=1}^3 \left | \frac{1}{3} \sum_{j=1}^3 \mathcal K(|x_i-x_j|) \frac{x_i-x_j}{|x_i-x_j|} \right |^2 dt \\\
&\geq& \frac{1}{T} \int_{t_0 - \varepsilon}^{t_0+\varepsilon} \frac{1}{3} \sum_{i=1}^3 \left | \frac{1}{3} \sum_{j=1}^3 \mathcal K(|x_i-x_j|) \frac{x_i-x_j}{|x_i-x_j|} \right |^2 dt\\
&\geq & \frac{c_{T,\varepsilon}}{3}  \mathcal K(r_0) \geq \frac{c_{T,\varepsilon}}{18T } \int_0^T  \sum_{i=1}^3 \sum_{j=1}^3 |\mathcal K(|x_i-x_j|)|^2  dt.
\end{eqnarray*}
In the latter inequality we used \eqref{maxbound}. Hence, also in this case, one can construct examples for which the coercivity assumption is verifiable. Actually this construction can be extended to any group of $N$ particles disposed on the vertexes of regular polygons. As an example of how one should proceed, let us consider the case of $N=4$ particles disposed instantanously at the vertexes of a square of side length $\sqrt{2} r>0$. In this case one directly verfies that
\begin{equation}\label{coerN4}
\frac{1}{4} \sum_{i=1}^4 \left | \frac{1}{4} \sum_{j=1}^4 \mathcal K(|x_i-x_j|) \frac{x_i-x_j}{|x_i-x_j|} \right |^2 = \frac{1}{16} ( \mathcal K(2 r) +  \sqrt 2 \mathcal K(\sqrt 2 r) )^2.
\end{equation}
Let us assume that the maximal absolute value of $\mathcal K$ is attained precisely at $\sqrt 2 r$. Then the minimum of the expression on the right-hand side of \eqref{coerN4} is attained for
the case where $\mathcal K(2 r)  = -  \mathcal K(\sqrt 2 r)$ yielding the following estimate from below
\begin{eqnarray*}
\frac{1}{4} \sum_{i=1}^4 \left | \frac{1}{4} \sum_{j=1}^4 \mathcal K(|x_i-x_j|) \frac{x_i-x_j}{|x_i-x_j|} \right |^2  &\geq& \frac{3 -2 \sqrt 2}{16} \mathcal K(\sqrt 2 r)^2.
\end{eqnarray*}
Hence, also in this case, we can apply the continuity argument above to eventually show the coercivity condition. Similar procedures can be followed for any $N \geq 5$. However, as $N \to \infty$ one can show numerically that the lower bound vanishes quite rapidly, making it impossible, perhaps not surprisingly, to conclude the coercivity condition for the uniform distribution over the circle.
\\

All the examples presented so far are based on discrete measures $\mu^N=\frac{1}{N} \sum_{i=1}^N\delta_{x_i}$ supported on particles lying on the vertexes of polytopes. However, one can consider an approximated convolution identity $g_\varepsilon$ for which $g_\varepsilon \to \delta_0$ for $\varepsilon \to 0$, where $\delta_0$ is a Dirac delta in $0$, and  the regularized probability measure
$$
\mu_\varepsilon(x) = g_\varepsilon * \mu^N (x)= \frac{1}{N} \sum_{i=1}^N g_\varepsilon (x-x_i).
$$
This diffuse measure approximates $\mu^N$ in the sense that $\mathcal W_1(\mu_\varepsilon,\mu^N) \to 0$ for $\varepsilon \to 0$, hence, in particular, integrals against Lipschitz functions can be well-approximated, i.e.,
$$
\left | \int_{\mathbb R^d} \varphi(x) d \mu^N(x) -\int_{\mathbb R^d}  \varphi(x) d \mu_\varepsilon(x) \right | \leq \Lip(\varphi) \mathcal W_1(\mu_\varepsilon,\mu^N) .
$$
Under the additional assumption that $\Lip_K(\widehat a) \sim \| \widehat a \|_{L_\infty(K)}$ (and this is true whenever $\widehat a$ is a piecewise polynomial function over a finite partition of $\R_+$, with the constant of the equivalence depending on the particular partition) one can extend the validity of the coercivity condition for $\mu^N$ \eqref{coercN2} 
to $\mu_\varepsilon$ as follows
\begin{align*}
&\frac{1}{T} \int_0^T \int_{\mathbb R^d}  \left | \int_{\mathbb R^d}  \mathcal K(|x-y|) \frac{y-x}{|y-x|} d \mu_\varepsilon(x) \right |^2 d\mu_\varepsilon(y) dt \\
& \geq  \frac{c_{T,\varepsilon }}{T} \int_0^T  \int_{\mathbb R^d}  \int_{\mathbb R^d}  |\mathcal K(|x-y|)|^2    d\mu_\varepsilon(x) d\mu_\varepsilon(y)dt,
\end{align*}
for a constant $c_{T,\varepsilon }>0$ for $\varepsilon>0$ small enough.
\\

In these latter sections we showed that the coercivity condition holds for ``generic" cases as well as for highly structured deterministic ones. In practice we can numerically verify that it holds in many situations, see Section \ref{numcoer}, and from now on we assume it without further concerns.

\subsection{Existence of minimizers of $\mathcal E^{[a],N}$}
The following proposition, which is a straightforward consequence of Ascoli-Arzel\'a Theorem, indicates the right ambient space where to state an existence result for the minimizers of $\mathcal E^{[a],N}$.

\begin{proposition}\label{XMdef}
Fix $M > 0$ and $K=[0,2R] \subset  \mathbb R_+$ for any $R>0$. Recall the set
\begin{align*}
X_{M,K} = \left\{b \in W^{1}_{\infty}(K) :
 \|b\|_{L_{\infty}(K)} + \|b'\|_{L_{\infty}(K)} \leq M
 \right\}.
\end{align*}
The space $X_{M,K}$ is relatively compact with respect to the uniform convergence on $K$.
\end{proposition}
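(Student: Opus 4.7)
The plan is to apply the Ascoli--Arzelà theorem directly, since the hypothesis defining $X_{M,K}$ is precisely designed to furnish uniform boundedness and equicontinuity.

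First, I would recall that every $b \in W^{1}_{\infty}(K)$ admits a Lipschitz continuous representative on the compact interval $K = [0,2R]$, whose Lipschitz constant is exactly $\|b'\|_{L_\infty(K)}$; in particular each $b \in X_{M,K}$ can be identified with an element of $C(K)$. Thus $X_{M,K}$ can be viewed as a subset of $C(K)$, which is the natural ambient space for uniform convergence on $K$.

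Next, I would verify the two Ascoli--Arzelà hypotheses. Uniform boundedness is immediate: for every $b \in X_{M,K}$,
\begin{equation*}
\sup_{s \in K}|b(s)| = \|b\|_{L_\infty(K)} \leq M.
\end{equation*}
Equicontinuity follows from the Lipschitz bound: for any $b \in X_{M,K}$ and $s,t \in K$,
\begin{equation*}
|b(s)-b(t)| = \left|\int_t^s b'(\tau)\,d\tau\right| \leq \|b'\|_{L_\infty(K)}\,|s-t| \leq M\,|s-t|,
\end{equation*}
so the whole family $X_{M,K}$ is $M$-Lipschitz, hence uniformly equicontinuous on $K$.

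Having checked these two properties, the Ascoli--Arzelà theorem yields that $X_{M,K}$ is relatively compact in $(C(K),\|\cdot\|_{L_\infty(K)})$, i.e.\ every sequence in $X_{M,K}$ admits a subsequence converging uniformly on $K$ to some continuous function. I do not anticipate any real obstacle here; the entire content is just that the defining constraint of $X_{M,K}$ has been tailored so that Ascoli--Arzelà applies verbatim. (One might additionally remark, though it is not required for the stated conclusion, that the uniform limit inherits the Lipschitz bound and therefore lies in $X_{M,K}$, so $X_{M,K}$ is in fact closed and even compact in $C(K)$; this fact is what will actually be invoked later when extracting convergent subsequences of minimizers.)
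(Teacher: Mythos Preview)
Your proposal is correct and follows essentially the same route as the paper: identify $W^{1}_\infty$ functions with their Lipschitz representatives via the fundamental theorem of calculus, read off uniform boundedness and equicontinuity from the defining constraint $\|b\|_{L_\infty}+\|b'\|_{L_\infty}\le M$, and apply Ascoli--Arzel\`a. The paper's proof even asserts, as you do parenthetically, that the uniform limit belongs to $X_{M,K}$.
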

\begin{proof}
Consider $(\widehat a_n)_{n \in \N} \subset X_{M,K}$. The Fundamental Theorem of Calculus (applicable for functions in $W^{1}_{\infty}$, see \cite[Theorem 2.8]{AFP00}) 
implies that the functions $\widehat a_n$ are all Lipschitz continuous with uniformly bounded Lipschitz constant, and are therefore equi-continuous. Since they are also pointwise uniformly equi-bounded,
by Ascoli-Arzel\'a Theorem there exists a subsequence converging uniformly on $K$ to some $\widehat a \in X_{M,K}$.
\end{proof}

\begin{proposition}\label{ENmin}
Assume $a \in X$. Fix $M > 0$ and $K=[0,2R] \subset  \mathbb R_+$ for $R>0$ as in Proposition \ref{pr:exist}. Let $V$ be a closed subset of $X_{M,K}$ w.r.t. the uniform convergence. Then, the optimization problem
\begin{align*}
	\min_{\widehat a \in V} \Eahatn
\end{align*}
admits a solution.
\end{proposition}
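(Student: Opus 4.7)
The proof will follow the classical direct method of the calculus of variations, relying on the compactness result of Proposition \ref{XMdef} together with the continuity of $\mathcal{E}^{[a],N}$ with respect to uniform convergence on $K$.

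First, I would fix a minimizing sequence $(\widehat{a}_n)_{n \in \N} \subset V$, i.e., one with
\begin{align*}
\lim_{n \to \infty} \mathcal{E}^{[a],N}(\widehat{a}_n) = \inf_{\widehat{a} \in V} \mathcal{E}^{[a],N}(\widehat{a}).
\end{align*}
Note that the infimum is finite: taking $\widehat{a} \equiv 0 \in X_{M,K}$ (and if $0 \notin V$, any single element of $V$) already gives a finite value, since the trajectories $x_i^{[a]}$ and their derivatives $\dot{x}_i^{[a]}$ are bounded on $[0,T]$ (by Proposition \ref{pr:exist} the trajectories lie in $B(0,R)$, and $\Fun{a}$ is bounded on compact sets).

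Since $V \subseteq X_{M,K}$, Proposition \ref{XMdef} provides a subsequence (not relabelled) and some $\widehat{a}_\infty \in X_{M,K}$ such that $\widehat{a}_n \to \widehat{a}_\infty$ uniformly on $K$. Because $V$ is closed with respect to uniform convergence, $\widehat{a}_\infty \in V$. The final step is to verify that $\mathcal{E}^{[a],N}$ is continuous along this convergence, which would give $\mathcal{E}^{[a],N}(\widehat{a}_\infty) = \lim_n \mathcal{E}^{[a],N}(\widehat{a}_n) = \inf_{V} \mathcal{E}^{[a],N}$ and conclude the proof.

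To establish this continuity, observe that the trajectories $x^{[a]}_i(t)$ are fixed (they depend on $a$, not on $\widehat{a}$), and by Proposition \ref{pr:exist} all distances $|x_i^{[a]}(t) - x_j^{[a]}(t)|$ take values in $K = [0, 2R]$ for every $t \in [0,T]$. Hence for every $t$, $i$, $j$,
\begin{align*}
\widehat{a}_n(|x_i^{[a]}(t) - x_j^{[a]}(t)|) \longrightarrow \widehat{a}_\infty(|x_i^{[a]}(t) - x_j^{[a]}(t)|)
\end{align*}
uniformly in $(t,i,j)$. Because the integrand
\begin{align*}
g_n(t) := \frac{1}{N}\sum_{i=1}^N \left| \frac{1}{N}\sum_{j=1}^N \bigl(\widehat{a}_n(|x_i^{[a]}-x_j^{[a]}|)(x_i^{[a]} - x_j^{[a]}) - \dot{x}_i^{[a]}\bigr) \right|^2
\end{align*}
is a fixed polynomial expression in quantities that converge uniformly in $t$, and is uniformly bounded by a constant depending only on $M$, $R$ and $\sup_{i,t} |\dot{x}_i^{[a]}(t)|$ (since $\|\widehat{a}_n\|_{L_\infty(K)} \leq M$), the dominated convergence theorem yields $\mathcal{E}^{[a],N}(\widehat{a}_n) \to \mathcal{E}^{[a],N}(\widehat{a}_\infty)$. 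The most delicate — though still elementary — point is precisely verifying this dominated bound uniformly in $n$, which follows immediately from the uniform $L_\infty$ control on $V \subseteq X_{M,K}$. This completes the argument.
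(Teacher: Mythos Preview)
Your proof is correct and follows essentially the same route as the paper: the direct method, compactness of $X_{M,K}$ from Proposition~\ref{XMdef}, closedness of $V$ to guarantee the limit stays in $V$, and continuity of $\mathcal{E}^{[a],N}$ under uniform convergence on $K$ via a uniform bound and dominated convergence. The only cosmetic difference is that the paper rewrites the functional in the measure form $\int |(\Fun{\widehat a}-\Fun{a})\ast\mu^N|^2\,d\mu^N$ and applies dominated convergence three times (once for each integral), whereas you work directly with the finite-sum expression and note the integrand $g_n(t)$ converges uniformly in $t$; both arguments are equivalent here.
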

\begin{proof} For proving the statement we apply the direct method of calculus of variations.
Since $\inf \mathcal E^{[a],N} \geq 0$, we can consider a minimizing sequence $(\widehat a_n)_{n \in \N} \subset V$, i.e., such that $\lim_{n \rightarrow \infty}  \mathcal E^{[a],N} (\widehat a_n) = \inf_{V}  \mathcal E^{[a],N} $. By Proposition \ref{XMdef} there exists a subsequence of $(\widehat a_n)_{n \in \N}$ (labelled again $(\widehat a_n)_{n \in \N}$) converging uniformly on $K$ to a function $\widehat a \in V$ (since $V$ is closed). We now show that $\lim_{n \rightarrow \infty}  \mathcal E^{[a],N} (\widehat a_n) =  \mathcal E^{[a],N} (\widehat a)$, from which it follows  that $ \mathcal E^{[a],N} $ attains its minimum in $V$. 

As a first step, notice that the uniform convergence of $(\widehat a_n)_{n \in \N}$ to $\widehat a$ on $K$ and the compactness of $K$ imply that the functionals $\Fun{\widehat a_n}(x-y)$ converge uniformly to $\Fun{\widehat a}(x-y)$ on $B(0,R)\times B(0,R)$ (where $R$ is as in \eqref{Rest}). Moreover, we have the uniform bound
\begin{align}
\begin{split}\label{Faest}
\sup_{x,y\in B(0,R)}|\Fun{\widehat a_n}(x-y) - \Fun{a}(x-y)| &= \sup_{x,y\in B(0,R)}|\widehat a_n(|x-y|) -  a(|x-y|)| |x-y| \\
&\leq 2R \sup_{r\in K} |\widehat a_n(r) -  a(r)| \\
& \leq 2R(M + \|a\|_{L_{\infty}(K)}).
\end{split}
\end{align}
As the measures $\mu^N(t)$ are compactly supported in $B(0,R)$ uniformly in time, the boundedness \eqref{Faest} allows us to apply three times the dominated convergence theorem to yield
\begin{align*}
\lim_{n \rightarrow \infty}  \mathcal E^{[a],N} (\widehat a_n) &= \lim_{n \rightarrow \infty}\frac{1}{T}\int_0^T\int_{\R^d} \left| \int_{\R^d}
			\left(\Fun{\widehat a_n}(x-y)-\Fun{a}(x-y)\right)d\mu^N(t)(y)\right|^2d\mu^N(t)(x) dt\\
			&= \frac{1}{T}\int_0^T\lim_{n \rightarrow \infty}\int_{\R^d} \left| \int_{\R^d}
			\left(\Fun{\widehat a_n}(x-y)-\Fun{a}(x-y)\right)d\mu^N(t)(y)\right|^2 d\mu^N(t)(x) dt\\
			&= \frac{1}{T}\int_0^T\int_{\R^d} \left| \lim_{n \rightarrow \infty}\int_{\R^d}
			\left(\Fun{\widehat a_n}(x-y)-\Fun{a}(x-y)\right)d\mu^N(t)(y)\right|^2 d\mu^N(t)(x) dt\\
			&= \frac{1}{T}\int_0^T\int_{\R^d} \left| \int_{\R^d}
			\left(\Fun{\widehat a}(x-y)-\Fun{a}(x-y)\right)d\mu^N(t)(y)\right|^2 d\mu^N(t)(x) dt\\
&=  \Eahatn,
\end{align*}
which proves the statement.

\end{proof}
%


\section{$\Gamma$-convergence of $ \mathcal E^{[a],N} $ to $ \mathcal E^{[a]} $}


This section is devoted to a proof of Theorem \ref{thm}.

\subsection{Uniform convergence estimates}

We start with a technical result.

\begin{lemma}\label{lemma-semicontinuous-0}
	Under the assumptions of Theorem \ref{thm}, let $(b_N)_{N \in \N}\subset X_{M,K}$ be a sequence of continuous functions and $b \in X_{M,K}$, for $K=[0,2R]$  with $R>0$ as in \eqref{Rest}.
	Then we have the estimate
\begin{equation}\label{approxestimate}	
\bigl|\mathcal E^{[a],N}(b_N)-\mathcal E^{[a]}(b)\bigr|
		\leq  c_1 \mathcal{W}_1(\mu_0^N,\mu_0)+ c_2 \| b_{N}-b\|_{L_\infty(K)},
\end{equation}
where the constants are explicitly given by $c_1= 32 \overline{C}R(2R+1)M^2$ and $c_2 = 16R^2M$.
\end{lemma}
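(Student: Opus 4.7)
The plan is to split the difference via the triangle inequality through the intermediate quantity $\mathcal E^{[a],N}(b)$:
\begin{align*}
\bigl|\mathcal E^{[a],N}(b_N)-\mathcal E^{[a]}(b)\bigr|
\leq \bigl|\mathcal E^{[a],N}(b_N)-\mathcal E^{[a],N}(b)\bigr|
+\bigl|\mathcal E^{[a],N}(b)-\mathcal E^{[a]}(b)\bigr|.
\end{align*}
The first difference will be controlled by $\|b_N-b\|_{L_\infty(K)}$ (producing the constant $c_2$), while the second will be controlled by $\mathcal W_1(\mu_0^N,\mu_0)$ via the stability estimate \eqref{stab} from Theorem \ref{uniq} (producing $c_1$). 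Throughout we exploit that all measures $\mu^N(t),\mu(t)$ are supported in $B(0,R)$, so only values of $\Fun{b_N}, \Fun{b}, \Fun{a}$ on $B(0,2R)$ enter, and on this set each of these fields is bounded in modulus by $2R\cdot M$, while each is Lipschitz with constant at most $M(1+2R)$ because its Jacobian satisfies $|D\Fun{c}(z)|\leq \|c\|_{L_\infty(K)}+\|c'\|_{L_\infty(K)}|z|$ for any admissible $c$.

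For the first term, I would use the algebraic identity $|A|^2-|B|^2=(A-B)\cdot(A+B)$ with $A=(\Fun{b_N}-\Fun{a})\ast\mu^N(t)(x)$ and $B=(\Fun{b}-\Fun{a})\ast\mu^N(t)(x)$. Since $|A|,|B|\leq 4RM$ for $x\in B(0,R)$, while
\begin{align*}
|A-B|\leq \sup_{z\in B(0,2R)}|\Fun{b_N}(z)-\Fun{b}(z)|\leq 2R\,\|b_N-b\|_{L_\infty(K)},
\end{align*}
integration against the probability measures $\mu^N(t)$ and the normalized time average over $[0,T]$ yields the bound $16R^2M\,\|b_N-b\|_{L_\infty(K)}$, matching $c_2$.

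For the second term, set $G=\Fun{b}-\Fun{a}$ and $H_\nu(x)=|G\ast\nu(x)|^2$, and decompose
\begin{align*}
\int H_{\mu^N(t)}d\mu^N(t)-\int H_{\mu(t)}d\mu(t)
=\int\bigl(H_{\mu^N(t)}-H_{\mu(t)}\bigr)d\mu^N(t)
+\int H_{\mu(t)}\bigl(d\mu^N(t)-d\mu(t)\bigr).
\end{align*}
The first piece is handled pointwise by the same $|A|^2-|B|^2$ trick, with $|A+B|\leq 8RM$ and $|A-B|=|G\ast(\mu^N(t)-\mu(t))(x)|\leq\Lip(G)\,\mathcal W_1(\mu^N(t),\mu(t))\leq 2M(1+2R)\mathcal W_1(\mu^N(t),\mu(t))$ by Kantorovich--Rubinstein duality \eqref{dualwass} applied to the Lipschitz function $y\mapsto G(x-y)$. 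The second piece is treated similarly, after checking that $x\mapsto H_{\mu(t)}(x)$ is Lipschitz with constant bounded by $2\cdot 4RM\cdot 2M(1+2R)=16RM^2(1+2R)$, which follows because $\nabla_x(G\ast\mu(t))(x)=(DG)\ast\mu(t)(x)$ and $|DG|\leq 2M(1+2R)$. Adding the two contributions gives a bound of $32RM^2(1+2R)\mathcal W_1(\mu^N(t),\mu(t))$, and finally Theorem \ref{uniq} upgrades this to $32\overline C RM^2(2R+1)\mathcal W_1(\mu_0^N,\mu_0)$ uniformly in $t$, which matches $c_1$ after averaging in time.

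The main obstacle is bookkeeping: one has to keep track of three separate Lipschitz/boundedness estimates (of $\Fun{\widehat a}$ as a function on $B(0,2R)$, of $G\ast\mu(t)$ as a function of the spatial variable, and of $G(x-\cdot)$ as a function in the dual characterization of $\mathcal W_1$) and to apply them in the correct order so that the stability estimate \eqref{stab} can be invoked only once at the end; otherwise spurious factors appear. No refined compactness argument is needed, only the uniform support bound from Proposition \ref{pr:exist} and the $X_{M,K}$ constraint on the Lipschitz constants of $b_N, b$.
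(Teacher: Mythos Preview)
Your proof is correct and follows essentially the same route as the paper: both arguments rest on the elementary identity $|A|^2-|B|^2=(A-B)\cdot(A+B)$, the Lipschitz bound $\Lip_{B(0,2R)}(\Fun{b}-\Fun{a})\leq 2(2R+1)M$, Kantorovich--Rubinstein duality, and the stability estimate \eqref{stab}, and both land on exactly the same constants $c_1,c_2$.

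The only organizational difference is in the intermediate splitting. You telescope through $\mathcal E^{[a],N}(b)$, cleanly separating the error into a ``change $b_N\to b$, keep $\mu^N$'' piece and a ``change $\mu^N\to\mu$, keep $b$'' piece. The paper instead compares the integrands $H_N(t,x)=\bigl|(\Fun{b_N}-\Fun{a})\ast\mu^N(t)(x)\bigr|^2$ and $H(t,x)=\bigl|(\Fun{b}-\Fun{a})\ast\mu(t)(x)\bigr|^2$ directly, splitting $\int H_N\,d\mu^N-\int H\,d\mu$ into a change of outer measure and a change of integrand; the latter then absorbs \emph{both} the $b_N\to b$ and the inner-measure $\mu^N\to\mu$ changes simultaneously via one combined estimate. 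Your decomposition is arguably tidier, but the two are equivalent in substance.
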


\begin{proof}
	By \eqref{stab}, $\mathcal{W}_1(\mu(t),\mu^N(t))\leq\overline{C}\mathcal{W}_1(\mu_0,\mu_0^N)$ uniformly in $t \in [0,T]$.
	For all $x,y,y' \in B(0,R)$, by the triangle inequality we have
	\begin{align*}
		|(\Fun{a} -\Fun{b})(x-y') - &(\Fun{a} -\Fun{b})(x- y)|  \\
			& \leq \left[2R (\Lip_K(a) + \Lip_K( b))    +\|a\|_{L_\infty(K)} + \| b\|_{L_\infty(K)} \right] |y-y'|,
	\end{align*}
	which implies the Lipschitz continuity of $(\Fun{a} -\Fun{b})(x- \cdot)$ in $B(0,R)$ for fixed $x\in B(0,R)$. Since $a,b \in X_{M,K}$, this implies
	\begin{equation}\label{lipfafb}
		\Lip_{B(0,R)}|(\Fun{a} -\Fun{b})(x-\cdot)|\leq 2 (2R+1)M,
	\end{equation}
	uniformly with respect to $x \in B(0,R)$. Consequently, we have
	\begin{align*}
		\biggl|\int_{\R^d}\bigl(\Fun{b}
			&-\Fun{a}\bigr)(x-y)d\mu^{N}(t)(y)-\int_{\R^d}\bigl(\Fun{b}-\Fun{a}\bigr)(x-y)d\mu(t)(y)\biggr|\\
			&\leq\Lip_{B(0,R)}|(\Fun{a} -\Fun{b})(x-\cdot)|\,\mathcal{W}_1(\mu^N(t),\mu(t))
				\leq 2 \overline{C}(2R+1)M\,\mathcal{W}_1(\mu_0^N,\mu_0),
	\end{align*}
	uniformly with respect to $t\in [0,T]$ and $x\in B(0,R)$. Furthermore, we also have
	\begin{eqnarray}
		\sup_{x,y \in B(0,R)}|\Fun{b_{N}}(x-y)-\Fun{b}(x-y)|&\leq& 2R \| b_{N}- b\|_{L_\infty(K)},\label{absbounda}\\
	\sup_{x,y \in B(0,R)}|\Fun{a}(x-y)-\Fun{b}(x-y)|	&\leq& 2R \| a- b\|_{L_\infty(K)}. \label{absbound}
	\end{eqnarray}
	Hence we further obtain
	\begin{align} \label{machecazzo}
		\Biggl|
			&\biggl|\int_{\R^d}\bigl(\Fun{b_{N}}-\Fun{a}\bigr)(x-y)d\mu^{N}(t)(y)\biggr| 
				-\biggl|\int_{\R^d}\bigl(\Fun{b}-\Fun{a}\bigr)(x-y)d\mu(t)(y)\biggr|\Biggr|\\ \nonumber
			&\leq\biggl|\int_{\R^d}\bigl(\Fun{b_{N}}-\Fun{a}\bigr)(x-y)d\mu^{N}(t)(y)
					-\int_{\R^d}\bigl(\Fun{b}-\Fun{a}\bigr)(x-y)d\mu(t)(y)\biggr|\\ \nonumber
			&\leq\Biggl|\int_{\R^d}
				\bigl(\Fun{b_{N}}-\Fun{b}\bigr)(x-y)d\mu^{N}(t)(y)\Biggr|\\ \nonumber
			&\qquad +\biggl|\int_{\R^d}\bigl(\Fun{b}-\Fun{a}\bigr)(x-y)d\mu^{N}(t)(y)
					-\int_{\R^d}\bigl(\Fun{b}-\Fun{a}\bigr)(x-y)d\mu(t)(y)\biggr|\\ \nonumber
			&\leq 2R \| b_{N}-b\|_{L_\infty(K)}\int_{\R^d}d\mu^{N}(t)(y)+ 2(2R+1)M\,\mathcal{W}_1(\mu^N(t),\mu(t))\\ \nonumber
			&\leq 2R \| b_{N}-b\|_{L_\infty(K)}+2 \overline{C}(2R+1)M\,\mathcal{W}_1(\mu_0^N,\mu_0). 
	\end{align}
Let
	\begin{align*}
		H_N(t,x)&=\Biggl|\int_{\R^d}\bigl(\Fun{b_{N}}-\Fun{a}\bigr)(x-y)d\mu^{N}(t)(y)\Biggr|^2\,,\quad
		&G_N(t)&= \int_{\R^d}H_N(t,x)d\mu^N(t)(x)\,,\\
		H(t,x)&= \Biggl|\int_{\R^d}\bigl(\Fun{b}-\Fun{a}\bigr)(x-y)d\mu(t)(y)\Biggr|^2\,,
		&G(t)&= \int_{\R^d}H(t,x)d\mu(t)(x).
	\end{align*}
	Then immediately it follows
	\begin{align}
		|G_N(t)-G(t)|&\leq \left|\int_{\R^d}H(t,x)d\mu^N(t)(x) - \int_{\R^d}H(t,x)d\mu(t)(x)\right| \nonumber\\
		&\quad + \int_{\R^d}\left|H_N(t,x) - H(t,x)\right|d\mu^N(t)(x). \label{aux3}
	\end{align}
From \eqref{absbound} and \eqref{lipfafb} we obtain
\begin{eqnarray*}
\Lip_{B(0,R)}H(t,\cdot)&\leq& 2 \left (\sup_{x,y \in B(0,R)}|\Fun{a}(x-y)-\Fun{b}(x-y)| \right ) \cdot\Lip_{B(0,R)}(\Fun{a} -\Fun{b})(\cdot -y) \\
&\leq& 4R\|a-b\|_{L_\infty(K)}\cdot 2(2R+1)M\,,
\end{eqnarray*}
and therefore
	\begin{align}\label{aux1}
		\left|\int_{\R^d}H(t,x)d\mu^N(t)(x) - \int_{\R^d}H(t,x)d\mu(t)(x)\right|
			&\leq\Lip_{B(0,R)}H(t,\cdot)\mathcal{W}_1(\mu^N(t),\mu(t))\nonumber\\
			&\leq 8R(2R+1)\overline{C}M\|a-b\|_{L_\infty(K)}\mathcal{W}_1(\mu_0^N,\mu_0)\nonumber\\
			&\leq 16R(2R+1)\overline{C}M^2 \mathcal{W}_1(\mu_0^N,\mu_0)
	\end{align}
	uniformly in $t \in [0,T]$. Similarly, \eqref{absbounda}, \eqref{lipfafb}, and \eqref{machecazzo} imply
	\begin{align}\label{aux2}
		\bigl|H_N(t,x)-H(t,x)\bigr|
			&\leq \Bigl(2R \| b_{N}-b\|_{L_\infty(K)}+2 \overline{C}(2R+1)M\,\mathcal{W}_1(\mu_0^N,\mu_0)\Bigr)\nonumber\\
			&\qquad\times
				2R\Bigl(\|b_N-a\|_{L_\infty(K)}+\|b-a\|_{L_\infty(K)}\Bigr)\nonumber\\
			&\leq 8RM\Bigl(2R \| b_{N}-b\|_{L_\infty(K)}+2\overline{C}(2R+1)M\,\mathcal{W}_1(\mu_0^N,\mu_0)\Bigr)
	\end{align}
	uniformly in $t \in [0,T]$ and $x \in B(0,R)$. A combination of \eqref{aux3} with \eqref{aux1} and
	\eqref{aux2} yields
	\begin{align*}
		|G_N(t)-G(t)|
			\leq 32\overline{C}R(2R+1)M^2 \mathcal{W}_1(\mu_0^N,\mu_0)+16R^2M\| b_{N}-b\|_{L_\infty(K)}
	\end{align*}
	uniformly in $t \in [0,T]$. Thus we finally arrive at
	\begin{align*}
		\bigl|\mathcal E^{[a],N}(b_N)-\mathcal E^{[a]}(b)\bigr|
			&=\biggl|\frac{1}{T}\int^T_0\bigl(G_N(t)-G(t)\bigr)dt\biggr|\\
			&\leq 32\overline{C}R(2R+1)M^2 \mathcal{W}_1(\mu_0^N,\mu_0)+16R^2M\| b_{N}-b\|_{L_\infty(K)}.
	\end{align*}
	This proves the claim.

\end{proof}

As a corollary, we now immediately obtain the following convergence result.

\begin{lemma}\label{lemma-semicontinuous-1}
	Under the assumptions of Theorem \ref{thm}, let $(b_N)_{N \in \N}\subset X_{M,K}$ be a sequence of continuous functions
	uniformly converging to a function $b \in X_{M,K}$ on $K=[0,2R]$  with $R>0$ as in \eqref{Rest}.
	Then it holds
	\begin{align*}
		\lim_{N\rightarrow\infty} \mathcal E^{[a],N}(b_{N})= \mathcal E^{[a]}(b).
	\end{align*}
\end{lemma}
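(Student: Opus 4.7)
The proof is essentially a direct corollary of the quantitative estimate established in Lemma \ref{lemma-semicontinuous-0}, so the plan is very short and the work has already been done.

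The plan is to simply apply Lemma \ref{lemma-semicontinuous-0} with the sequence $(b_N)_{N\in\N}$ and limit $b$ from the hypothesis, yielding
\begin{align*}
\bigl|\mathcal E^{[a],N}(b_N)-\mathcal E^{[a]}(b)\bigr|
\leq c_1 \mathcal W_1(\mu_0^N,\mu_0) + c_2 \|b_N - b\|_{L_\infty(K)},
\end{align*}
with $c_1,c_2$ the explicit constants given there (depending only on $R$, $M$, and $\overline C$, and hence not on $N$). Then I pass to the limit as $N\to\infty$: the second term vanishes by the assumed uniform convergence $b_N\to b$ on $K$, while the first term vanishes because, under the standing assumption of Theorem \ref{thm} that the initial particle positions $x_{0,1}^N,\dots,x_{0,N}^N$ are i.i.d.~samples from $\mu_0$, the empirical measure $\mu_0^N$ converges to $\mu_0$ in $\mathcal W_1$ (as stated in Proposition \ref{pr:exist}, and which is the setting in which $\mu^N$ approximates the mean-field solution $\mu$). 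Both terms on the right-hand side therefore tend to $0$, establishing the claim.

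There is no real obstacle here: the only thing to be careful about is to verify that the two ingredients needed to kill the right-hand side of \eqref{approxestimate} are genuinely part of the hypotheses. Uniform convergence of $b_N$ to $b$ on $K$ is explicit; Wasserstein convergence of $\mu_0^N$ to $\mu_0$ is implicit in the i.i.d.~sampling assumption of Theorem \ref{thm} and is the hypothesis under which $\mathcal E^{[a],N}$ is defined via Proposition \ref{pr:exist}. One could equivalently state the lemma by adding ``$\mathcal W_1(\mu_0^N,\mu_0)\to 0$'' to its hypotheses and make the deduction purely analytic, independent of any probabilistic input.
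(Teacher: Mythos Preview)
Your proof is correct and matches the paper's own argument: both simply invoke the quantitative estimate \eqref{approxestimate} from Lemma \ref{lemma-semicontinuous-0} and let the two terms on the right-hand side tend to zero. The only minor imprecision is that Proposition \ref{pr:exist} \emph{assumes} $\mathcal W_1(\mu_0^N,\mu_0)\to 0$ rather than proving it; the paper attributes this convergence (from the i.i.d.\ sampling hypothesis of Theorem \ref{thm}) to the Glivenko--Cantelli theorem, which is the justification you should cite.
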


\begin{proof}
	This follows immediately from the estimate \eqref{approxestimate}, upon noticing
	$\W_1(\mu_0,\mu^N_0) \rightarrow 0$ for $N \rightarrow \infty$ as a consequence of the Glivenko-Cantelli theorem,
see for instance \cite[Lemma 3.3]{fornahuetter}.
\end{proof}

\subsection{Proof of the main result}

We are now ready to present the proof of our main result Theorem \ref{thm}.

\begin{proof}[\normalfont\bf Proof of Theorem \ref{thm}]

	The sequence of minimizers $(\widehat a_N)_{N \in \N}$ is by definition a subset of $X_{M,K}$, hence by Proposition \ref{XMdef} it admits a subsequence $(\widehat a_{N_k})_{k \in \N}$ uniformly converging to a function $\widehat a \in X_{M,K}$.
	
	To show the optimality of $\widehat a$ in $X_{M,K}$, let $b\in X_{M,K}$ be given. By Definition \ref{VNdef}, we can find a sequence $(b_N)_{N \in \N}$ converging uniformly to $b$ on $K$ such that $b_N\in V_N$ for every $N\in \N$. Lemma \ref{lemma-semicontinuous-1} implies
	\begin{align*}
		\lim_{N\rightarrow\infty} \mathcal E^{[a],N}(b_{N})= \mathcal E^{[a]}(b),
	\end{align*}	
	and, by the optimality of $\widehat a_{N_k}$ in $V_N$, it follows that
	\begin{align*}
		\mathcal E^{[a]} (b)=\lim_{N\rightarrow\infty}\mathcal E^{[a],N}(b_N)
			= \lim_{k \rightarrow\infty}\mathcal E^{[a],N_k}(b_{N_k})
			\geq\lim_{k \rightarrow\infty}\mathcal E^{[a],N_k}(\widehat a_{N_k})
			= \mathcal E^{[a]} (\widehat a)\,.
	\end{align*}
	We can therefore conclude that for every $b \in X_{M,K}$
	\begin{align}\label{fond}
		 \mathcal E^{[a]} (b)\geq \mathcal E^{[a]} (\widehat a)\,.
	\end{align}
 In particular, \eqref{fond} applies to $b=a\in X_{M,K}$ (by the particular choice of $M$), which finally implies
	\begin{align*}
		0=\mathcal E^{[a]} (a)\geq \mathcal E^{[a]} (\widehat a)\geq 0 \mbox { or } \mathcal E^{[a]} (\widehat a)=0,
	\end{align*}
	showing that $\widehat a$ is also a minimizer of $\mathcal E^{[a]}$. When the coercivity condition \eqref{eq-coercive} holds,  it follows that $\widehat a=a$ in $L_2(\R_+,\rho)$.
Assume now that \eqref{rate1} and \eqref{rate2} hold together with  \eqref{eq-coercive}. Then, by these latter conditions, two applications of \eqref{approxestimate}, the minimality of $\widehat a_N$, and the optimality of
$a$ in the sense that $\mathcal E^{[a]}(a)=0$, we obtain the following chain of estimates
\begin{eqnarray*}
\| \widehat a_N - a \|_{L_2(\R_+,\rho)}^2 &\leq& \frac{1}{c_T}  \mathcal E^{[a]}(\widehat a_N) \\
&\leq& \frac{1}{c_T} \left ( \mathcal E^{[a],N}(\widehat a_N) + ( \mathcal E^{[a]}(\widehat a_N)- \mathcal E^{[a],N}(\widehat a_N))  \right )\\
&\leq & \frac{1}{c_T} \left (\mathcal E^{[a],N}(\widehat a_N) +  c_1 \mathcal W_1(\mu_0^N,\mu_0) \right ) \\
&\leq &\frac{1}{c_T} \left (\mathcal E^{[a],N}(a_N) +  c_1 \mathcal W_1(\mu_0^N,\mu_0) \right ) \\
&\leq &\frac{1}{c_T} \left (2 c_1 \mathcal W_1(\mu_0^N,\mu_0) + c_2 \| a - a_N \|_{L_\infty(K)} \right ) \\
&\leq & C_3  N^{-\min \{\alpha,\beta\}}.
\end{eqnarray*} 
This concludes the proof.
\end{proof}


\section{Numerical experiments}\label{sec:num}

In this section we report several numerical experiments to document the validity and applicability of Theorem \ref{thm}. We will first show how the reconstruction of the unknown kernel $a$ gets better as the number of agents $N$ increases, in accordance with the $\Gamma$-convergence result reported in the last section. This feature holds true also for at least some interaction kernels not lying in the function space $X$, as shown in Figure \ref{variableN2}. We will then investigate empirically the validity of the coercivity condition \eqref{eq-coercive} comparing the functional $\mathcal E^{[a],N}(\widehat a_N)$ with $\|a - \widehat a_N\|_{L_2(\R_+,\rho^N)}^2$ where $\rho^N$ is constructed as $\rho$ but referring to the empirical measures $\mu^N$. Then we address  the behavior of $\mathcal E^{[a],N}(\widehat a_{N,M})$ for $N$ fixed, while we let the constraint constant  $M$ vary (here $\widehat a_{N,M}\equiv\widehat a_{N}$). Finally, we show how we can get a very satisfactory reconstruction of the unknown interaction kernel by keeping $N$ fixed and averaging the minimizers of the functional $\mathcal E^{[a],N}$ obtained from several samples of the initial data distribution $\mu_0$.

\subsection{Numerical framework}\label{numfram}

All experiments  rely on a common numerical set-up, which we clarify in this section. All the initial data $\mu^N_0$ are drawn from a common probability distribution $\mu_0$ which is the uniform distribution on the $d$-dimensional cube $[-L,L]^d$. For every $\mu^N_0$, we simulate the evolution of the system starting from $\mu^N_0$ until time $T$, and we shall denote with $R$ the maximal distance between particles reached during the time frame $[0,T]$. Notice that we have at our disposal only a finite sequence of snapshots of the dynamics: if we denote with $0 = t_0 < t_1 < \ldots < t_m = T$ the time instants at which these snapshots are taken, we can consider the \textit{discrete-time error functional}
\begin{align*}
\mathcal{E}^{[a],N}_\Delta(\widehat{a}) & = \frac{1}{m} \sum^m_{k = 1} \frac{1}{N} \sum^N_{j = 1} \left| \frac{1}{N} \sum^N_{i = 1} \widehat{a}(|x_j(t_k) - x_i(t_k)|)(x_j(t_k) - x_i(t_k)) - \dot{x}_i(t_k)\right|^2,
\end{align*}
which is the time-discrete counterpart of the continuous-time error functional $\mathcal{E}^{[a],N}$. As already mentioned in the introduction, derivatives $\dot{x}_i(t_k)$ appearing in $\mathcal{E}^{[a],N}_\Delta$ are actually approximated as well by finite differences: in our experiments we will use the simplest approximation
\begin{align*}
\dot{x}_i(t_k) = \frac{x_i(t_k) - x_i(t_{k-1})}{t_k - t_{k-1}}, \text{ for every } k \geq 1.
\end{align*}

Regarding the reconstruction procedure, we fix the constraint level $M>0$ and consider the sequence of invading subspaces $V_N$ of $X_{M,K}$ ($K=[0,2 R]$ here) generated by a B-spline basis with $D(N) $ elements supported on $[0,2 R]$: for every element $\widehat{a} \in V_N$ it holds
\begin{align*}
	\widehat{a}(r) = \sum^{D(N)}_{\lambda = 1} a_{\lambda} \varphi_{\lambda}(r), \qquad r \in [0,2R].
\end{align*}
In order for $V_N$ to increase in $N$ and invade $X_{M,K}$, we let $D(N)$ be a strictly increasing function of $N$. For the sake of simplicity, we shall employ a \textit{linear uniform} B-spline basis supported on the interval $[0,2 R]$ with $0$-smoothness conditions at the boundary, see \cite{deboor}.

Whenever $\widehat{a} \in V_N$, we can rewrite the functional $\mathcal{E}^{[a],N}_\Delta$ as
\begin{align*}
\mathcal{E}^{[a],N}_\Delta(\widehat{a}) & = \frac{1}{m} \sum^m_{k = 1} \frac{1}{N} \sum^N_{j = 1} \left| \frac{1}{N} \sum^N_{i = 1} \sum^{D(N)}_{\lambda = 1} a_{\lambda} \varphi_{\lambda}(|x_j(t_k) - x_i(t_k)|)(x_j(t_k) - x_i(t_k)) - \dot{x}_i(t_k)\right|^2 \\
& = \frac{1}{m} \sum^m_{k = 1} \frac{1}{N} \sum^N_{j = 1} \left| \sum^{D(N)}_{\lambda = 1} a_{\lambda} \frac{1}{N} \sum^N_{i = 1} \varphi_{\lambda}(|x_j(t_k) - x_i(t_k)|)(x_j(t_k) - x_i(t_k)) - \dot{x}_i(t_k)\right|^2 \\
& = \frac{1}{mN} \left\| \mathbf C  \vec{a} - \vec v \right\|^2_{2},
\end{align*}
where $\vec{a} = (a_1, \ldots, a_{D(N)})$, $\vec v = (\dot{x}_1(t_1), \ldots, \dot{x}_N(t_1), \ldots,\dot{x}_1(t_m), \ldots, \dot{x}_N(t_m))$ and the tensor $\mathbf C \in \R^{d\times Nm \times D(N)}$ satisfies for every $j = 1, \ldots,N$, $k = 1, \ldots,m$, $\lambda = 1, \ldots,D(N)$
\begin{align*}
\mathbf C(jk,\lambda) = \frac{1}{N} \sum^N_{i = 1} \varphi_{\lambda}(|x_j(t_k) - x_i(t_k)|)(x_j(t_k) - x_i(t_k)) \in \R^d.
\end{align*}

We shall numerically implement the constrained minimization with the software CVX \cite{cvx,gb08}, which allows constraints and objectives to be specified using standard MATLAB expression syntax. In order to use it, we need to rewrite the constraint of our minimization problem, which reads
\begin{align*}
	\|a\|_{L_{\infty}([0,R])} + \|a'\|_{L_{\infty}([0,R])} \leq M,
\end{align*}
using only the minimization variable of the problem, which is the vector of coefficients of the B-spline basis $\vec{a}$. Notice that the property of being a linear B-spline basis implies that, for every $\lambda = 1, \ldots, D(N)-1$, the property $\supp(\varphi_{\lambda}) \cap \supp(\varphi_{\lambda + j}) \not= \emptyset$ holds if and only if $j = 1$. Hence, for every $a \in V_N$ we have
\begin{align*}
\|a\|_{L_{\infty}([0,2 R])} &= \max_{r \in [0,R]} \left|\sum^{D(N)}_{\lambda = 1} a_{\lambda} \varphi_{\lambda}(r)\right|
 \leq \max_{\lambda = 1, \ldots, D(N)-1} \left(|a_{\lambda}| + |a_{\lambda+1}|\right) 
 \leq 2 \|\vec{a}\|_{\infty},
\end{align*}
\begin{align*}
\|a'\|_{L_{\infty}([0,2 R])} &= \max_{r \in [0,2 R]} \left|\sum^{D(N)}_{\lambda = 1} a_{\lambda} \varphi'_{\lambda}(r)\right| 
 \leq \max_{\lambda = 1, \ldots, D(N)-1} |a_{\lambda+1} - a_{\lambda}|
 = \|\mathbf D\vec{a}\|_{\infty},
\end{align*}
where, in the last line, $\mathbf  D$ is the standard finite difference matrix
\begin{align*}
\mathbf  D = \begin{bmatrix}
    1       & -1 & 0 & \dots & 0 & 0 \\
    0     & 1 & -1 & \dots & 0 & 0 \\
   \vdots & \vdots & \ddots & \ddots & \vdots & \vdots \\ \\
    0       & 0 & 0 & \dots & 1 & -1 \\
    0       & 0 & 0 & \dots & 0 & 0
\end{bmatrix}.
\end{align*}
We therefore replace the constrained minimization problem
\begin{align*}
\min_{\widehat{a} \in V_N} \mathcal{E}^{[a],N}(\widehat{a}) \quad \text{ subject to } \quad \|\widehat{a}\|_{L_{\infty}([0,R])} + \|\widehat{a}'\|_{L_{\infty}([0,R])} \leq M,
\end{align*}
by
\begin{align}\label{problem2}
\min_{\vec{a} \in \R^{D(N)}} \frac{1}{mN} \left\|\mathbf C \vec{a} - \vec v \right\|^2_{2} \quad \text{ subject to } \quad 2\|\vec{a}\|_{\infty} + \|\mathbf D\vec{a}\|_{\infty} \leq M\,,
\end{align}
which has weaker constraints, but is amenable to numerical solution.
The byproduct of the time discretization and the reformulation of the constraint is that minimizers of problem \eqref{problem2} may not be precisely the minimizers of the original one. This is the price to pay for this simple numerical implementation of the $L_\infty$-constraints. Despite such a crude discrete model,  we still observe all the approximation properties proved in the previous sections and the implementation results both simple and effective.

\subsection{Varying $N$}

In Figure \ref{variableN} we show the reconstruction of a truncated Lennard-Jones type interaction kernel obtained with different values of $N$. Table \ref{tab:fig1} reports the values of the different parameters.

\begin{table}[h!]
\begin{center}
\begin{tabular}{ |c|c|c|c|c|c| }
\hline
  $d$ & $L$ & $T$ & $M$ & $N$ & $D(N)$ \\
\hline
\hline
  $2$ & $3$ & $0.5$ & $100$ & $[10,20,40,80]$ & $2N$ \\
\hline
\end{tabular}
\end{center}
\vspace{-0.5cm}
\caption{Parameter values for Figure \ref{variableN} and Figure \ref{variableN2}.} \label{tab:fig1} 
\end{table}

It is clearly visible how the the piecewise linear approximant (displayed in blue) gets closer and closer to the potential to be recovered (in red), as predicted by the theoretical results of the previous sections. What is however surprising is that the same behavior is witnessed in Figure \ref{variableN2}, where the algorithm is applied to an interaction kernel $a$ not belonging to the function space $X$ (due to its singularity at the origin) with the same specifications reported in Table \ref{tab:fig1}. In particular, the algorithm performs an excellent approximation despite the highly oscillatory nature of the function $a$ and produce a natural numerical homogeneization when the discretization is not fine enough.

\begin{figure}[h!]
\begin{center}
\hspace{-0.7cm}\includegraphics[width=0.55\textwidth]{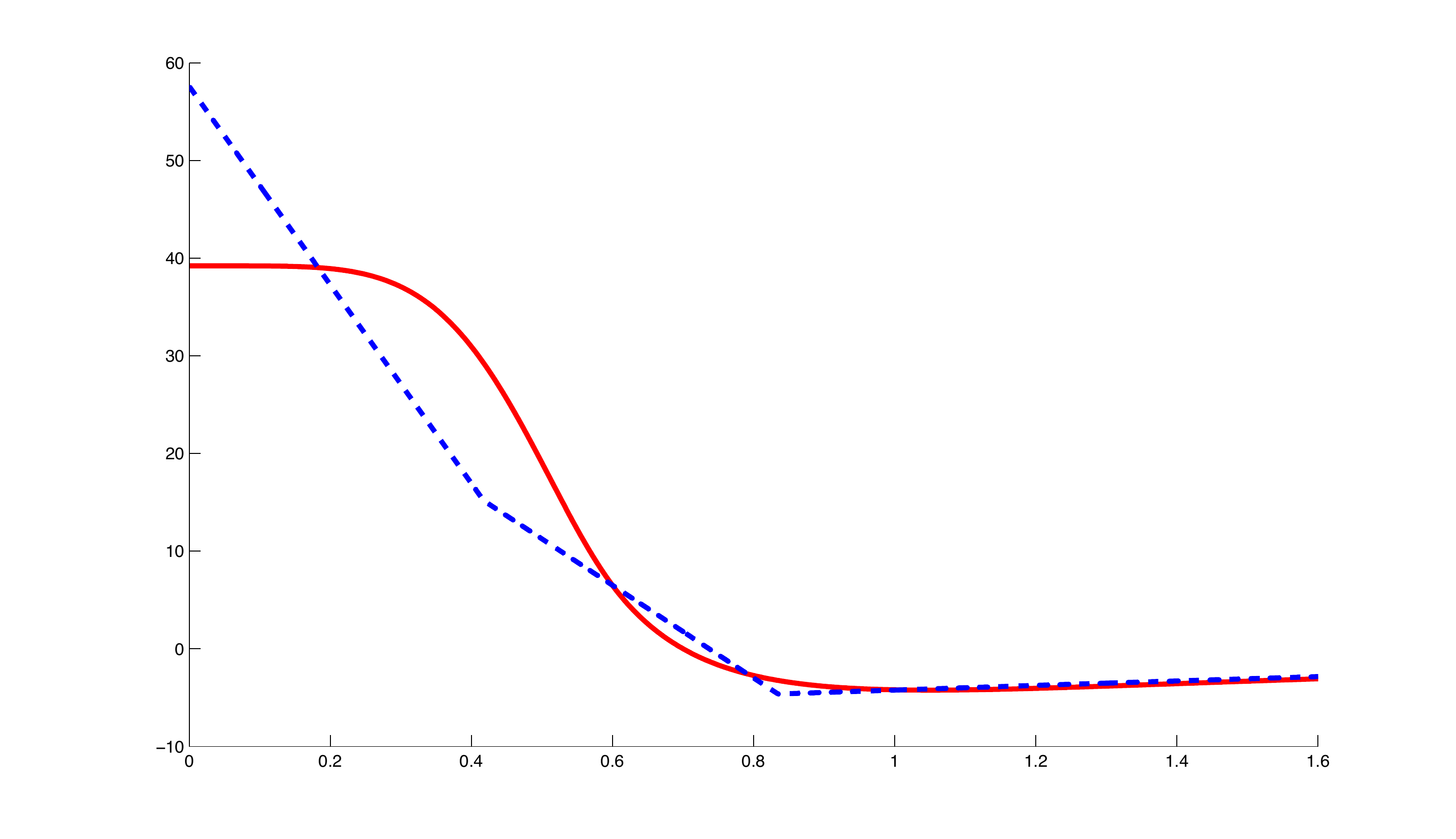}\hspace{-0.9cm}
\includegraphics[width=0.55\textwidth]{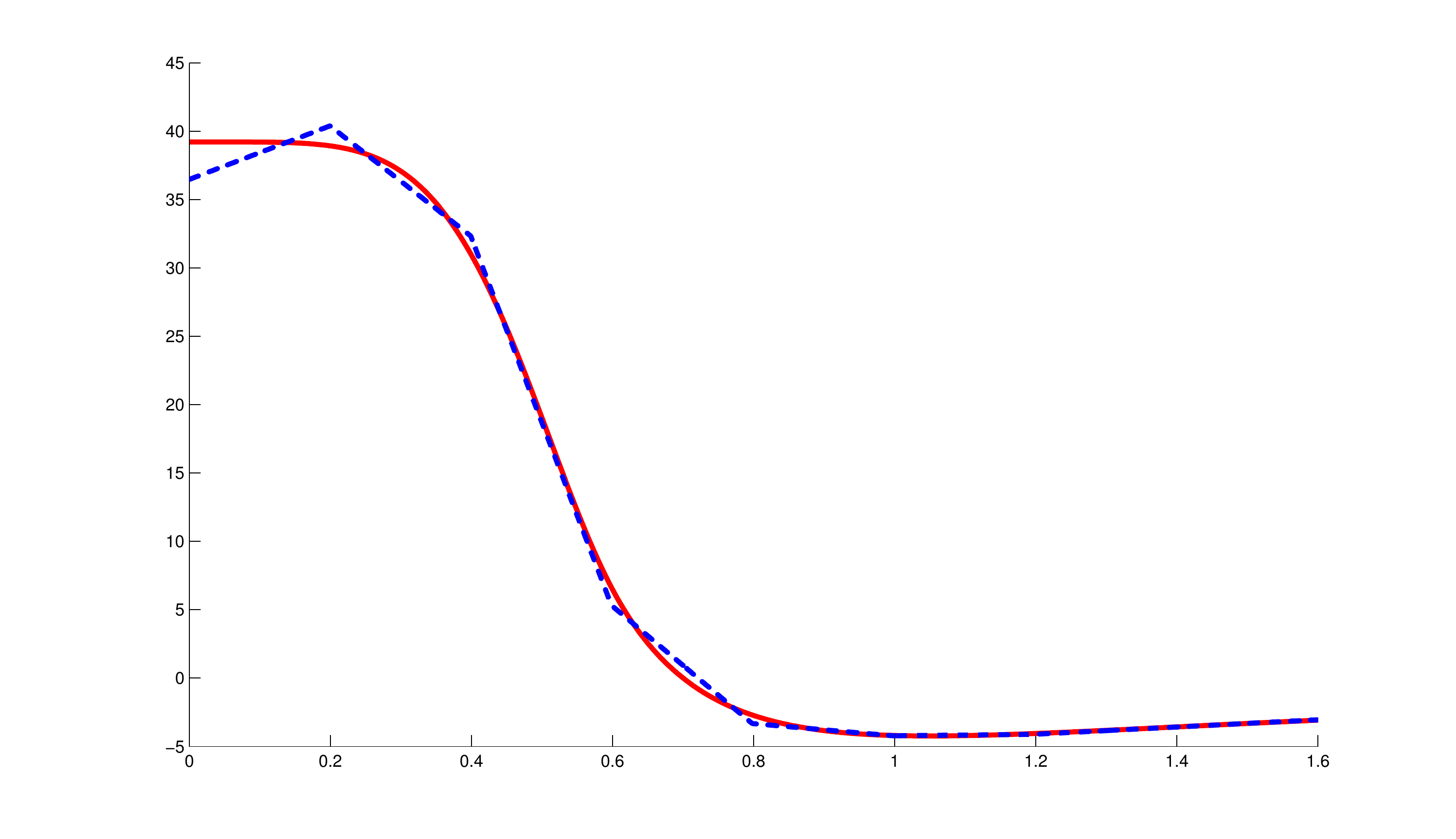}\\
\hspace{-0.7cm}\includegraphics[width=0.55\textwidth]{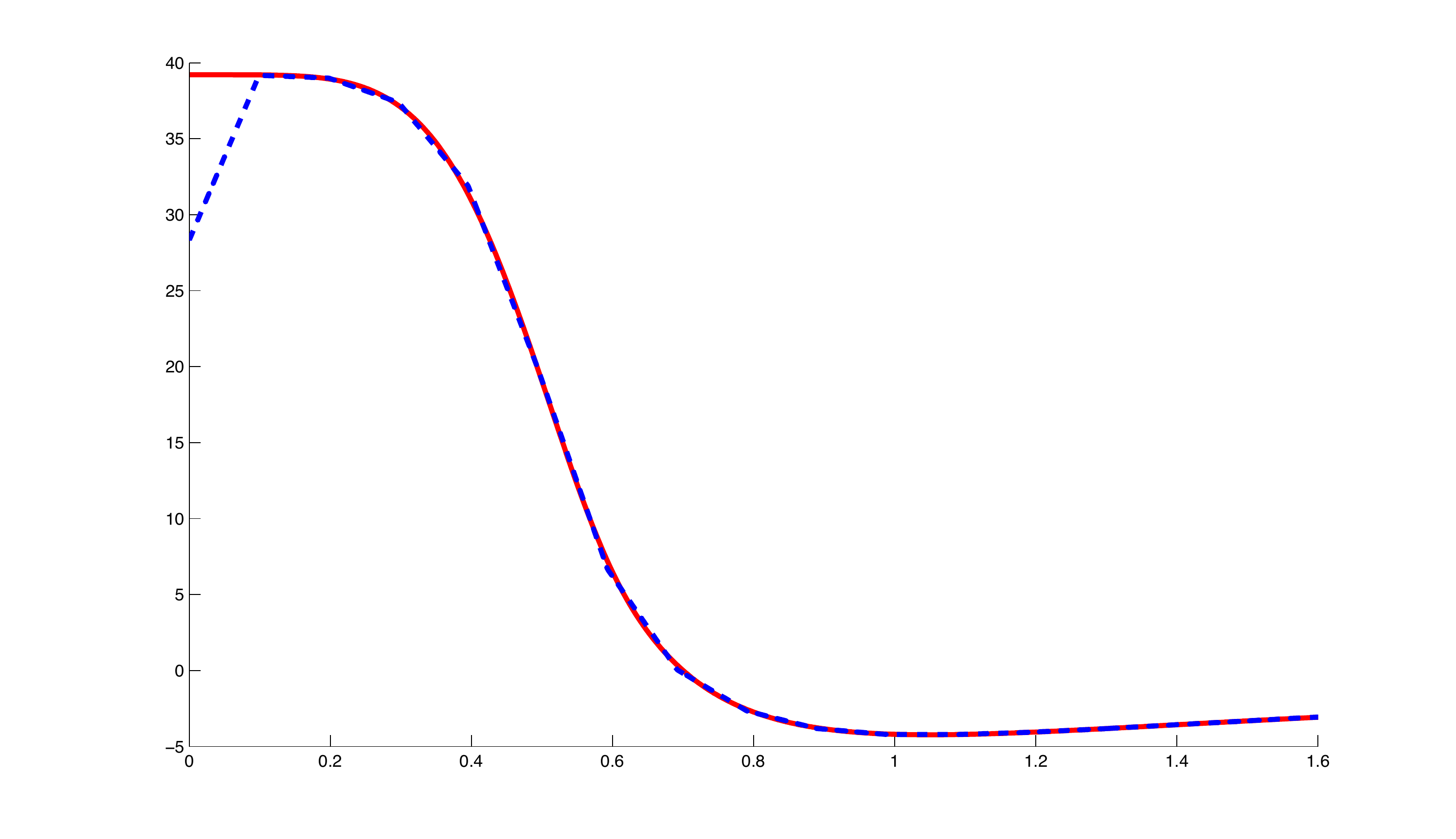}\hspace{-0.9cm}
\includegraphics[width=0.55\textwidth]{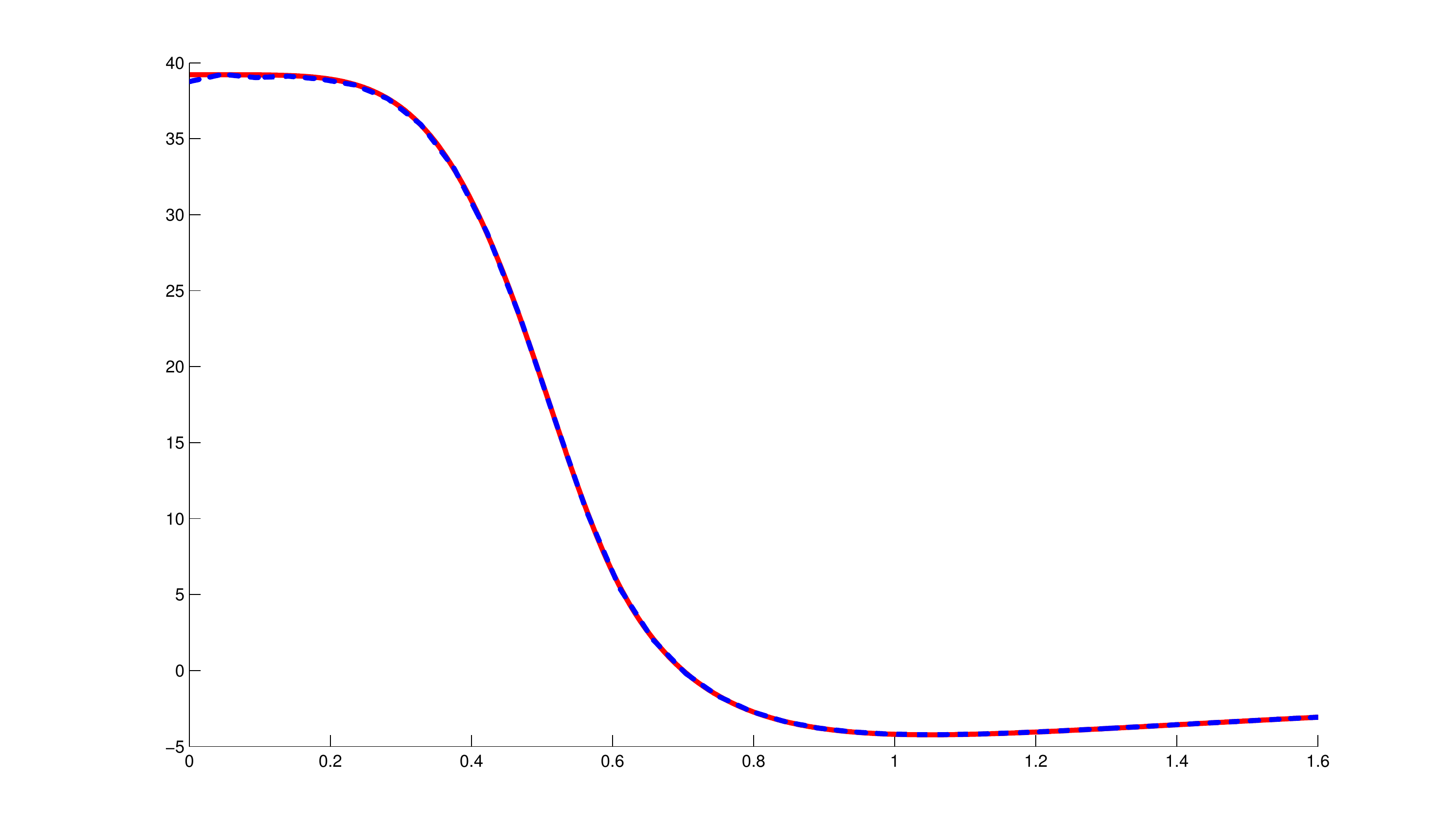}
\end{center}
\caption{Iterative reconstruction of a potential with different values of $N$. In red: the unknown kernel. In blue: its reconstruction by minimization of $\mathcal{E}^{[a],N}$. From left-top to right-bottom: reconstruction with $N = 10, 20, 40, 80$ agents. We notice that the uniform convergence at $0$ is slower in view of the quadratic polynomial weight $s^2$ as in \eqref{rho} and because less information is actually gathered around $0$.}\label{variableN}
\end{figure}

\begin{figure}[h!]
\begin{center}
\hspace{-0.7cm}\includegraphics[width=0.55\textwidth]{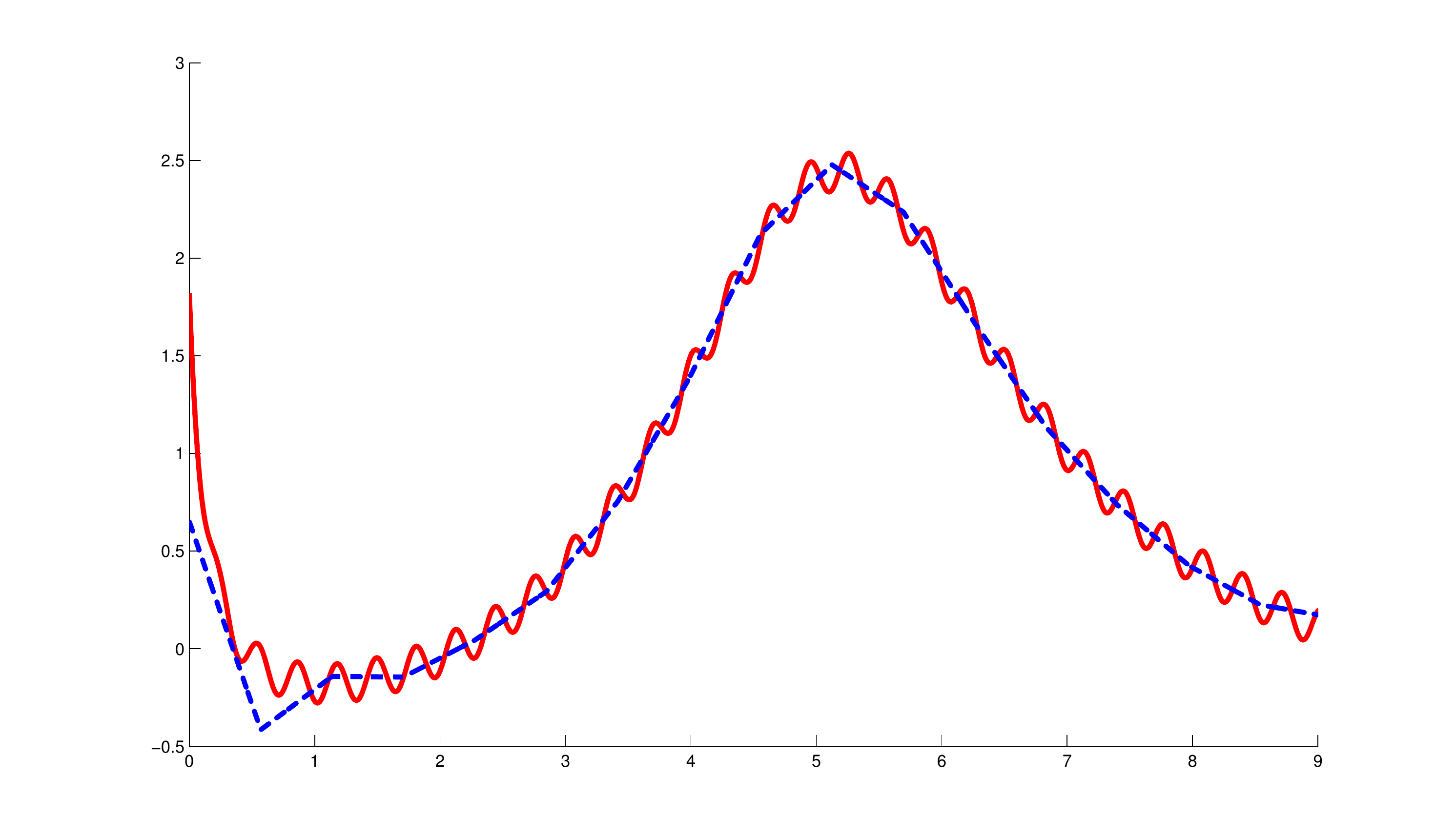}\hspace{-0.9cm}
\includegraphics[width=0.55\textwidth]{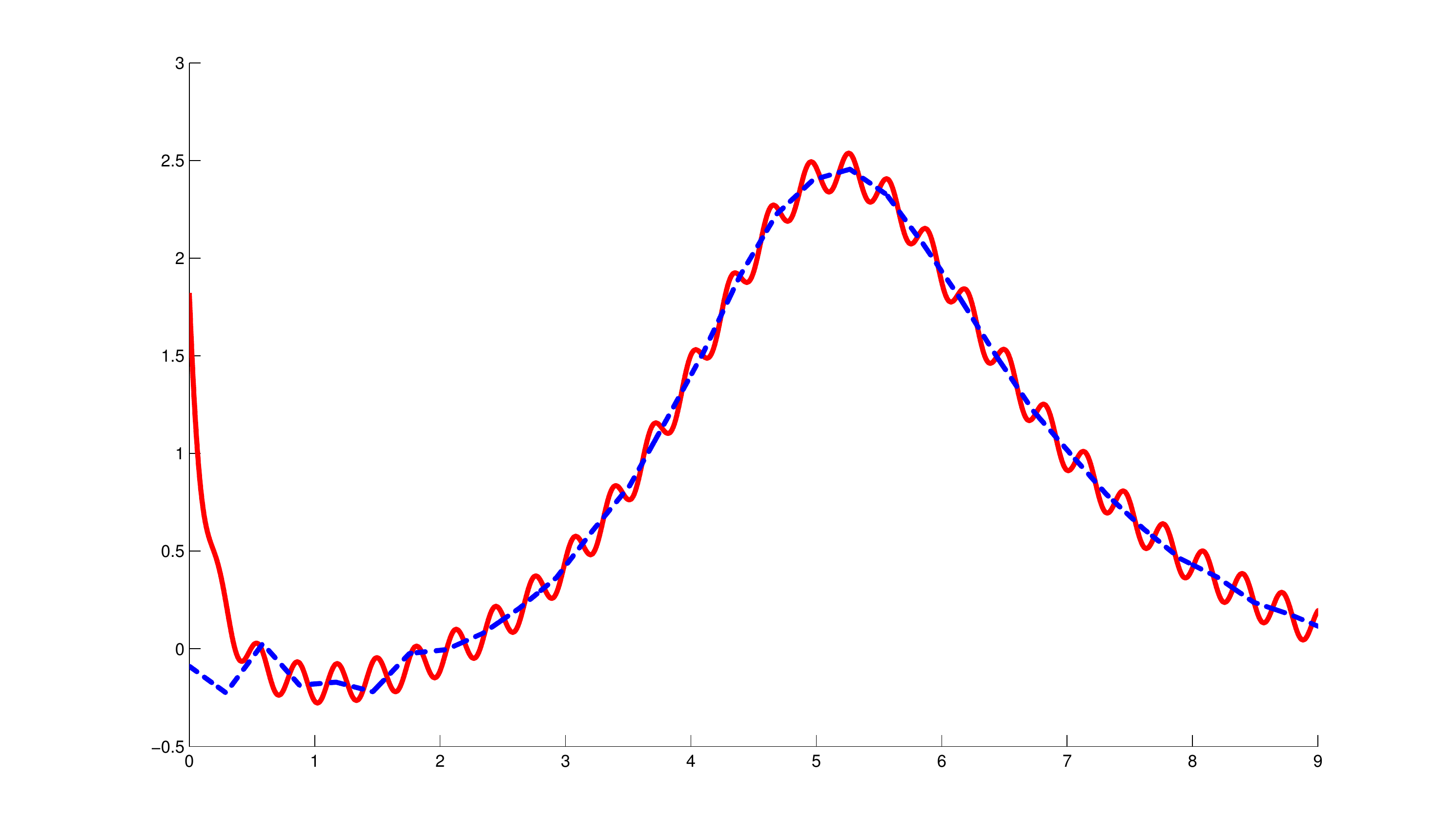}\\
\hspace{-0.7cm}\includegraphics[width=0.55\textwidth]{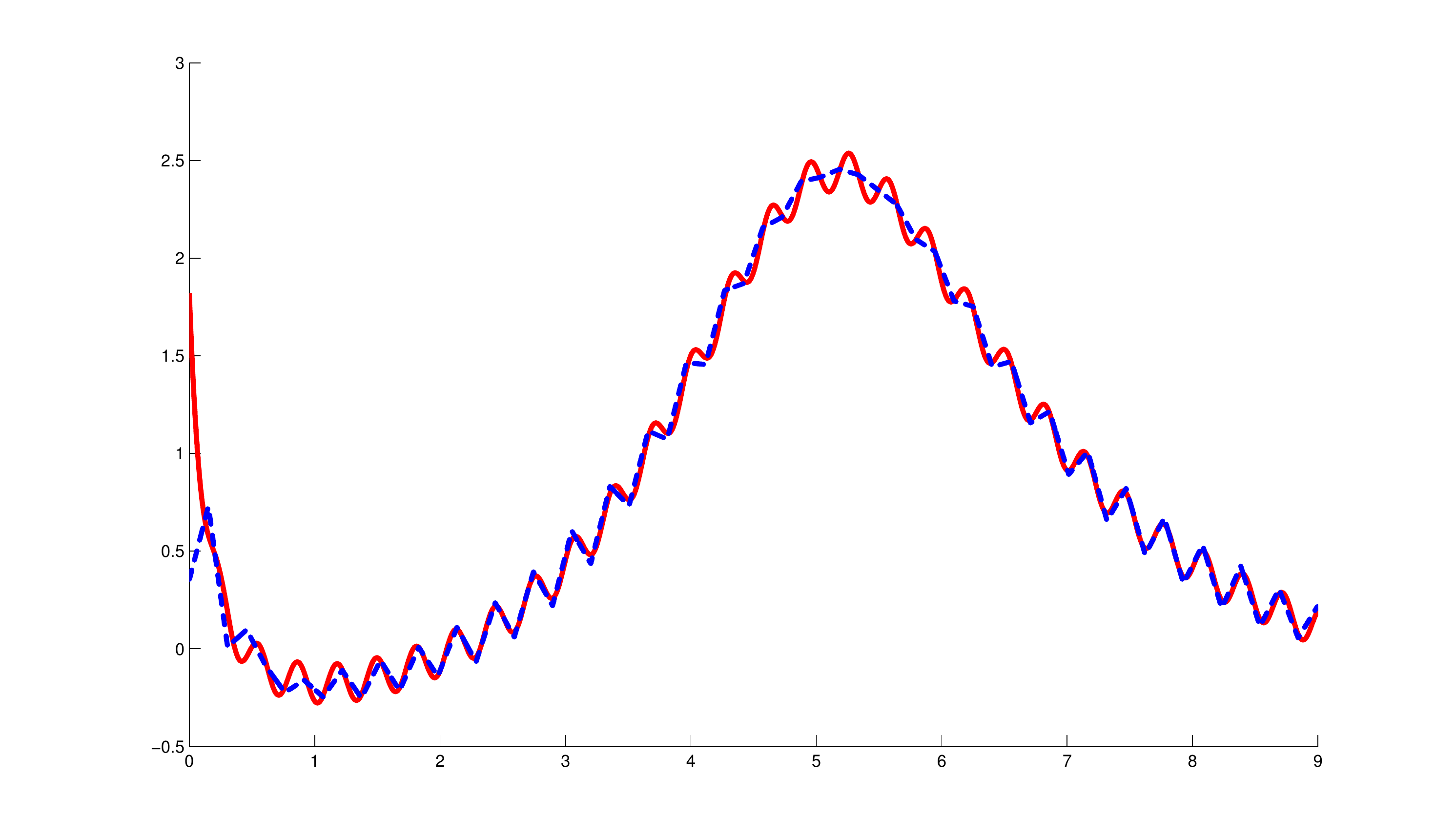}\hspace{-0.9cm}
\includegraphics[width=0.55\textwidth]{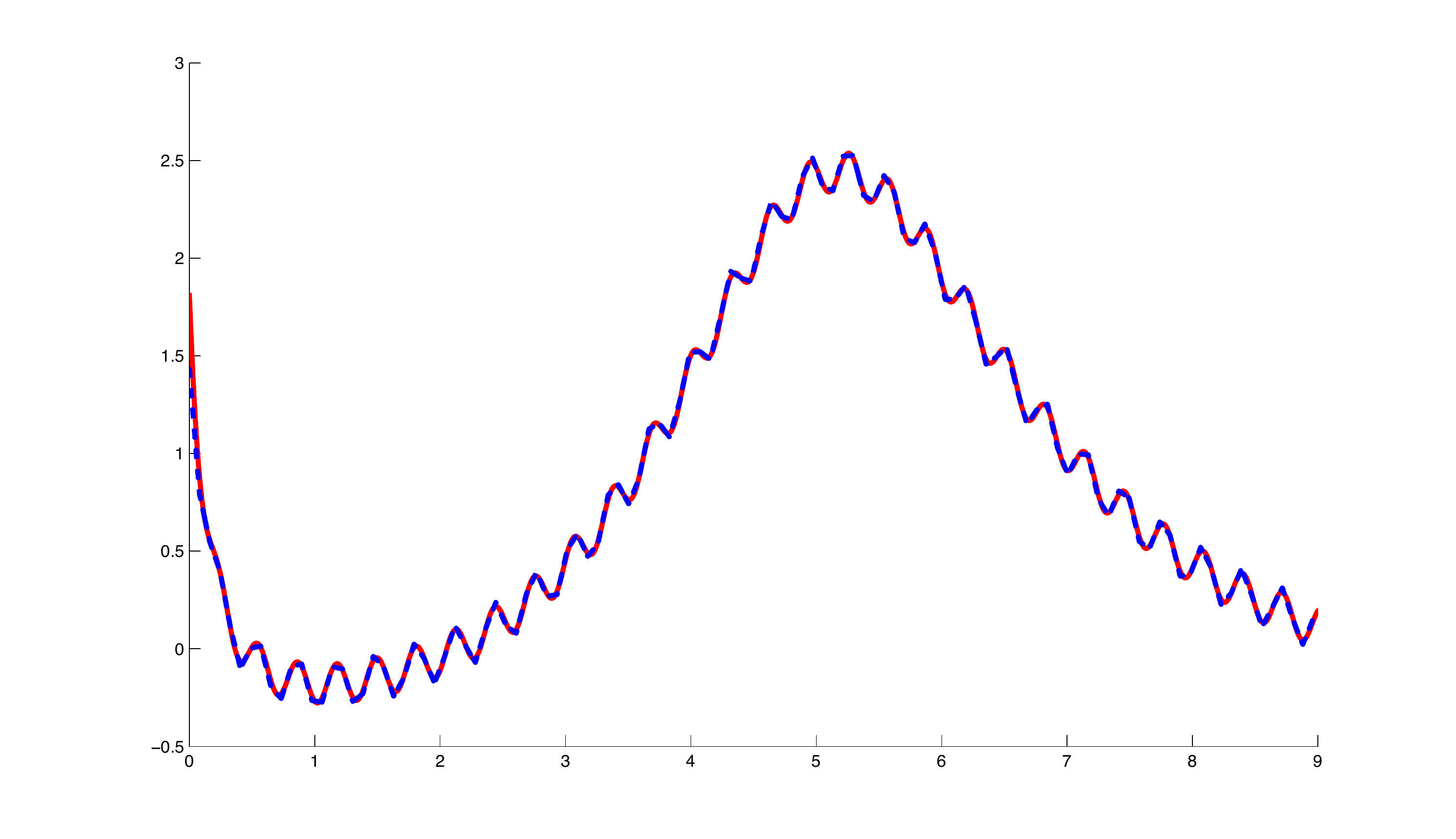}
\end{center}
\caption{Iterative reconstruction of a potential with a singularity at the origin and highly oscillatory behavior. In red: the unknown kernel. In blue: its reconstruction by minimization of $\mathcal{E}^{[a],N}$. From left-top to right-bottom: reconstruction with $N = 10, 20, 40, 80$ agents.}\label{variableN2}
\end{figure}

\subsection{Numerical validation of the coercivity condition}\label{numcoer}

We now turn our attention to the coercivity constant $c_T$ appearing in \eqref{eq-coercive} and thoroughly discussed in Section \ref{sec:coerc}. In Figure \ref{errorN} we see a comparison between the evolution of the value of the error functional $\mathcal{E}^{[a],N}_\Delta(\widehat{a}_N)$ and of the $L_2(\R_+,\rho^N)$-error $\|a-\widehat{a}_N\|^2_{L_2(\R_+,\rho^N)}$ for different values of $N$. 

\begin{figure}[h]
\hspace{-1.2cm}
\begin{minipage}{0.58\textwidth}
\begin{center}
\includegraphics[width=1\textwidth]{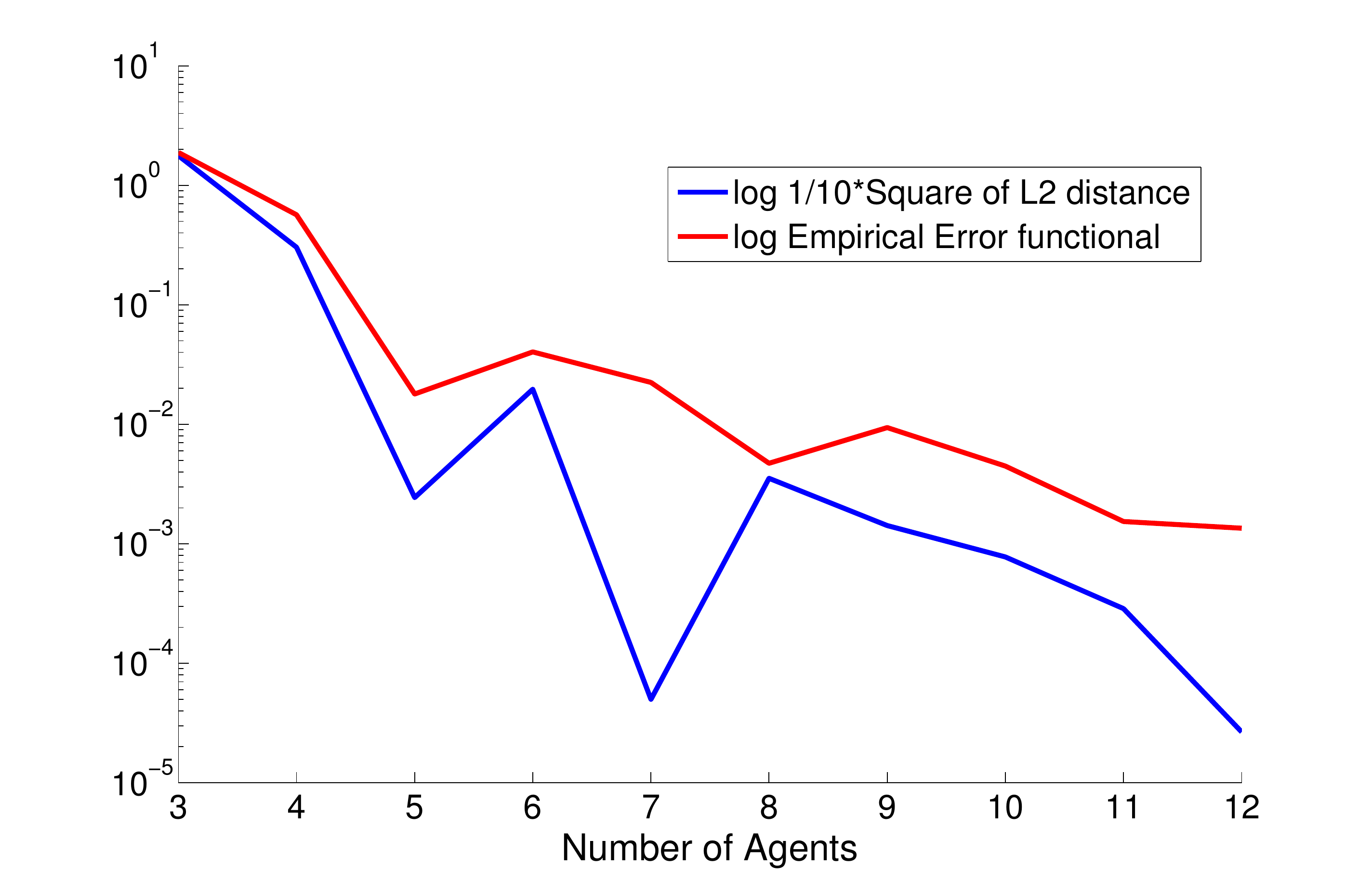}
\end{center}
\label{fig:coerc}
\caption{Plot in logarithmic scale of $\mathcal{E}^{[a],N}(\widehat{a}_N)$ and $\frac{1}{10}\|a-\widehat{a}_N\|^2_{L_2(\R_+,\rho^N)}$ for different values of $N$. In this experiment, we can estimate the constant $c_T$ with the value $\frac{1}{10}$.}\label{errorN}
\end{minipage}
\hspace{0.4cm}
\begin{minipage}{0.55\textwidth}
\begin{center}
\includegraphics[width=1\textwidth]{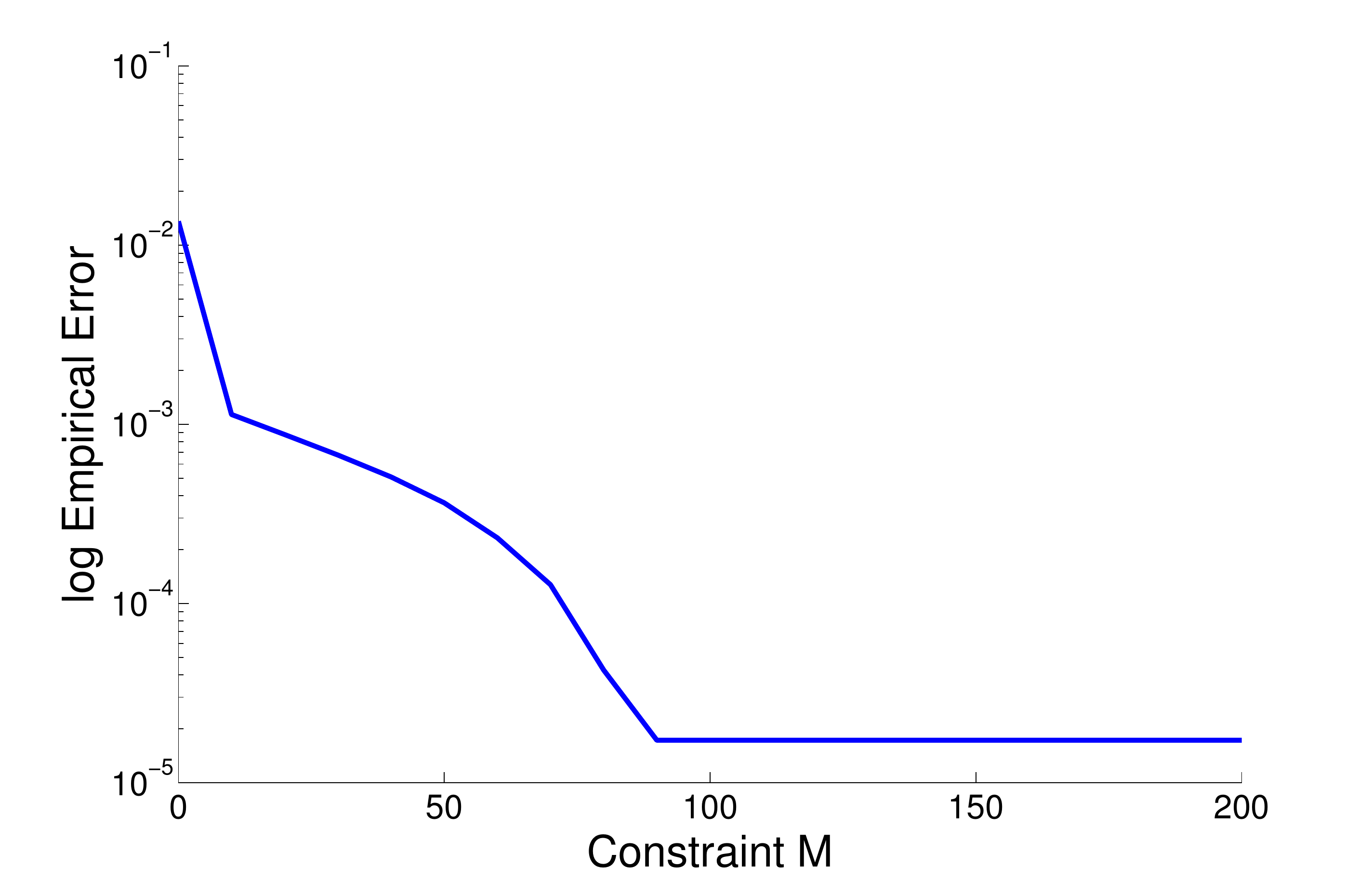}
\end{center}
\label{fig:coerc2}
\caption{Values in logarithmic scale of $\mathcal{E}^{[a],N}_\Delta(\widehat a_{N,M})$ for fixed $N = 50$ for different values of $M \in [0,200]$.}\label{Mconstr}
\end{minipage}
\end{figure}

In this experiment, the potential $a$ to be retrieved is the truncated Lennard-Jones type interaction kernel of Figure \ref{variableN} and the parameters used in the algorithm are reported in Table \ref{tab:fig3}.

\begin{table}[h]
\begin{center}
\begin{tabular}{ |c|c|c|c|c|c| }
\hline
  $d$ & $L$ & $T$ & $M$ & $N$ & $D(N)$ \\
\hline
\hline
  $2$ & $5$ & $0.5$ & $100$ & $[3,4,\ldots,12]$ & $3N-5$ \\
\hline
\end{tabular}
\end{center}
\vspace{-0.5cm}
\caption{Parameter values for Figure \ref{errorN}.} \label{tab:fig3} 
\end{table}

For every value of $N$, we have obtained the minimizer $\widehat{a}_N$ of problem \eqref{problem2} and we have computed the errors $\mathcal{E}^{[a],N}(\widehat{a}_N)$ and $\|a-\widehat{a}_N\|^2_{L_2(\R_+,\rho^N)}$. The $L_2(\R_+,\rho^N)$-error multiplied by a factor $\frac{1}{10}$ lies entirely below the curve of $\mathcal{E}^{[a],N}(\widehat{a}_N)$, which let us empirically estimate the value of $c_T$ around that value (see Figures \ref{fig:coerc} and \ref{fig:coerc2}).
%

\subsection{Tuning the constraint $M$}

Figure \ref{Mconstr1} shows what happens when we modify the value of $M$ in problem \eqref{problem2}. More specifically, we generate $\mu^N_0$ as explained in Section \ref{numfram} once, and we simulate the system starting from $\mu^N_0$ until time $T$. With the data of this single evolution, we solve problem \eqref{problem2} for several values of $M$ and we denote with $\widehat{a}_M \equiv \widehat{a}_{N,M} \equiv \widehat{a}_{N}$ the minimizer obtained with a specific value of $M$. On the left side of Figure \ref{Mconstr1} we show how the reconstruction $\widehat{a}_M$ gets closer and closer to the true potential $a$ (in white) as $M$ increases, while on the right side we illustrate how the original trajectories (again, in white) used for the inverse problem are approximated better and better by those generated with the computed approximation $\widehat{a}_M$, if we let $M$ grow. Table \ref{tab:figM} reports the values of the parameters of these experiments.

\begin{table}[h]
\begin{center}
\begin{tabular}{ |c|c|c|c|c|c|c| }
\hline
 & $d$ & $L$ & $T$ & $M$ & $N$ & $D(N)$ \\
\hline
\hline
 First row & $2$ & $3$ & $1$ & $2.7 \times [10,15,\ldots,40]$ & $20$ & $60$ \\
\hline
 Second row & $2$ & $3$ & $1$ & $1.25 \times [10,15,\ldots,40]$ & $20$ & $150$ \\
\hline
\end{tabular}
\end{center}
\vspace{-0.5cm}
\caption{Parameter values for Figure \ref{Mconstr1}.} \label{tab:figM} 
\end{table}

\begin{figure}[h!]
\begin{center}
\hspace{-0.7cm}\includegraphics[width=0.625\textwidth]{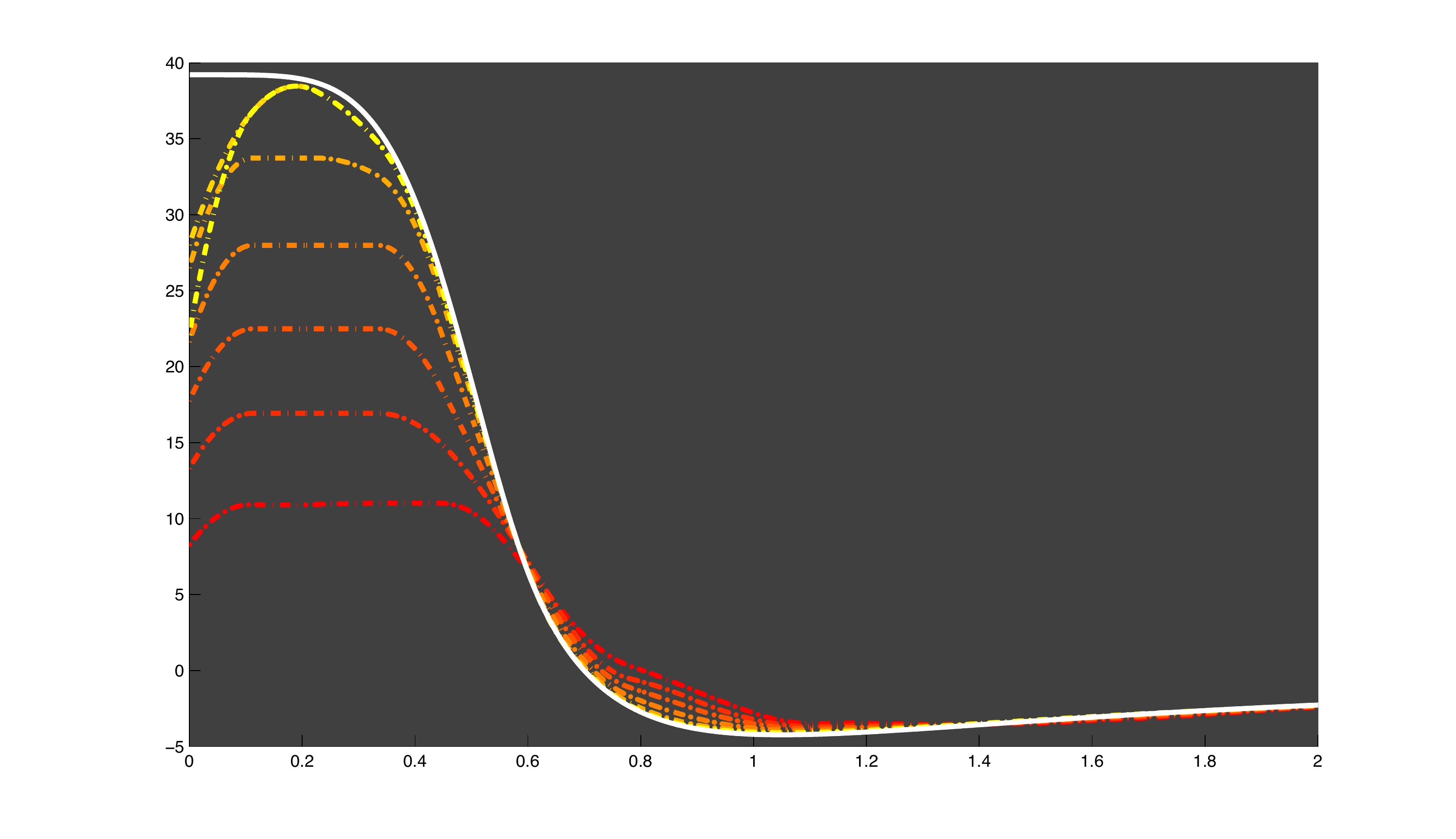}\hspace{-0.9cm}
\includegraphics[width=0.475\textwidth]{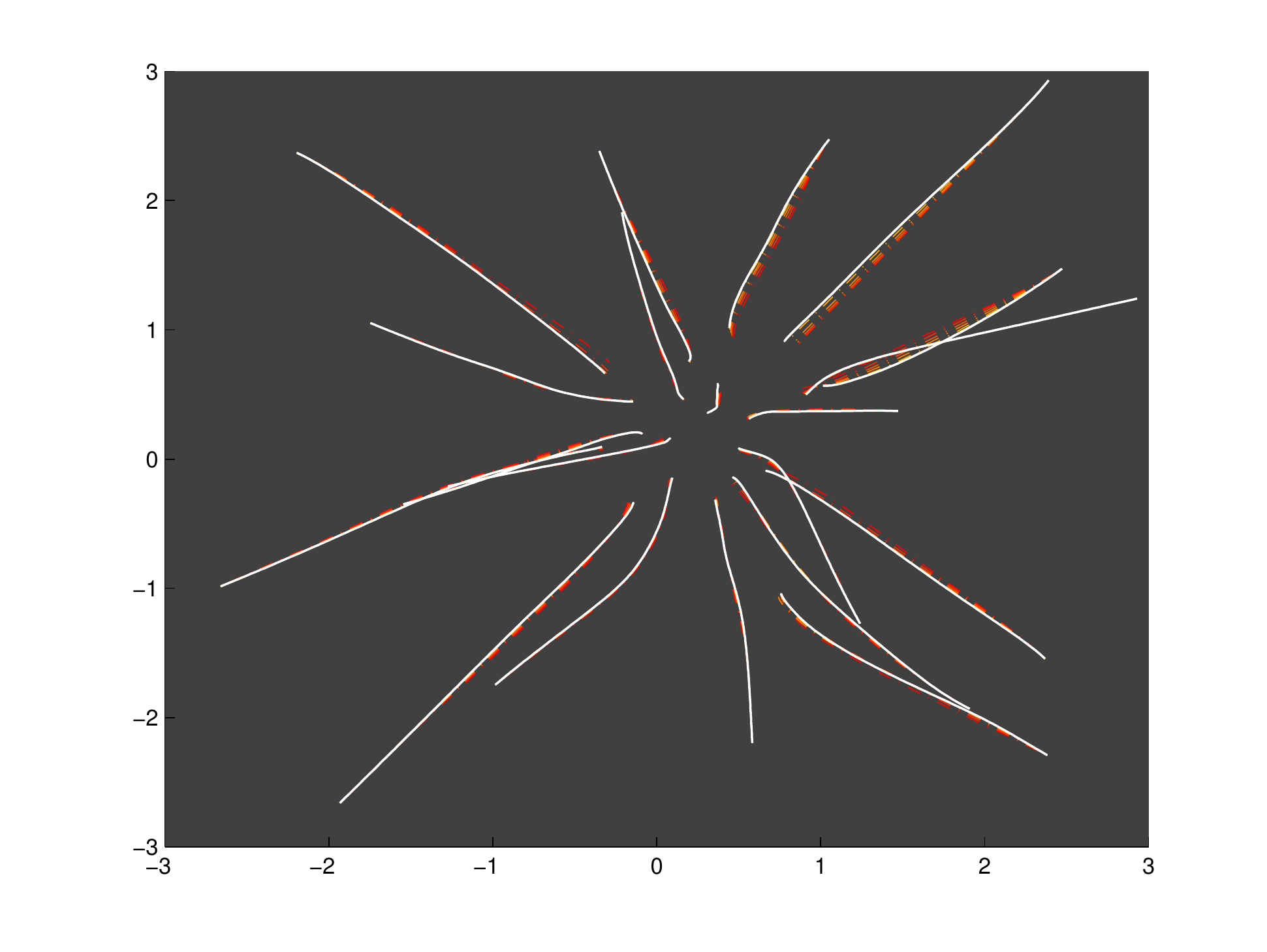}\\
\hspace{-0.7cm}\includegraphics[width=0.645\textwidth]{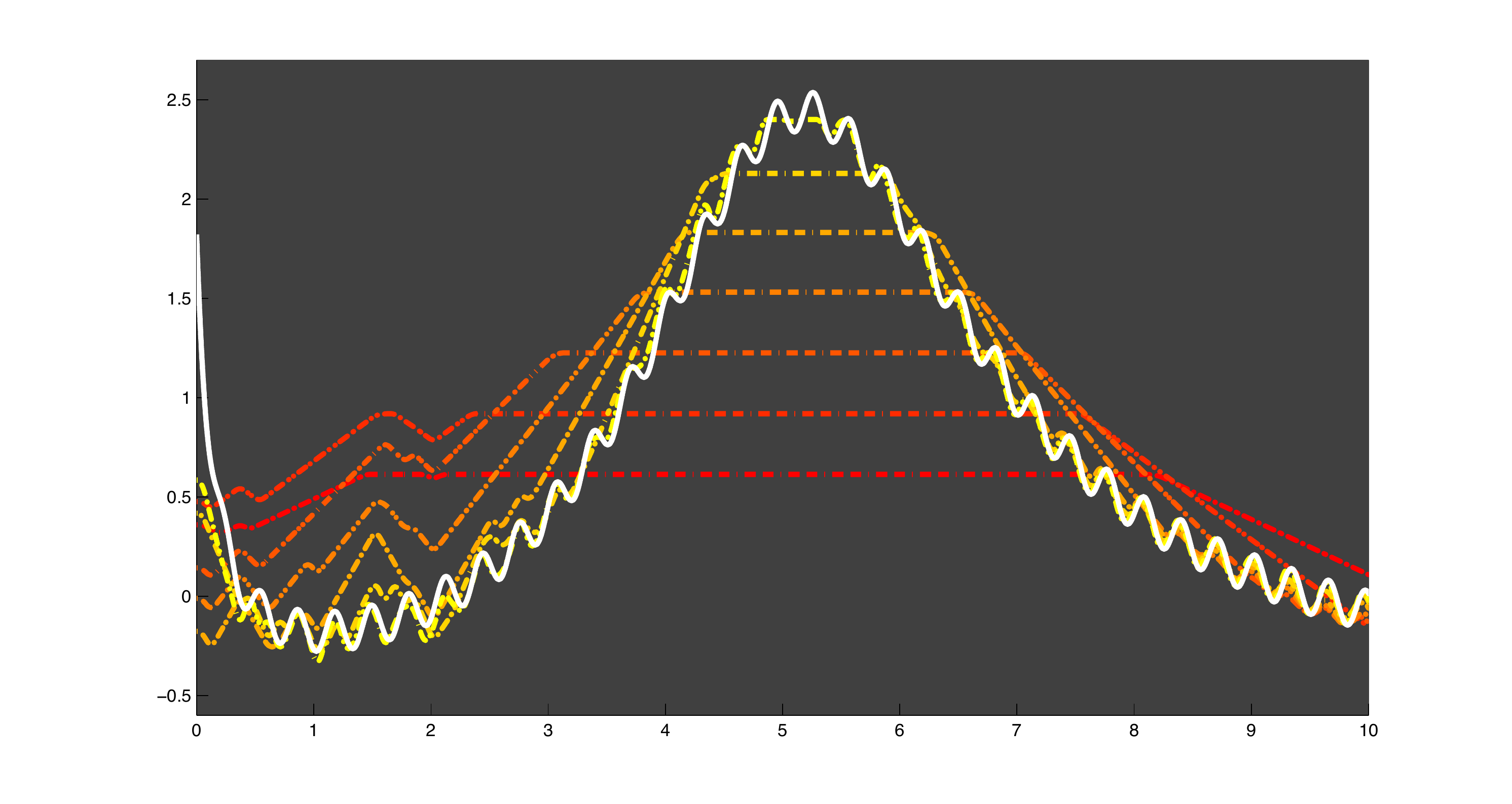}\hspace{-0.9cm}
\includegraphics[width=0.455\textwidth]{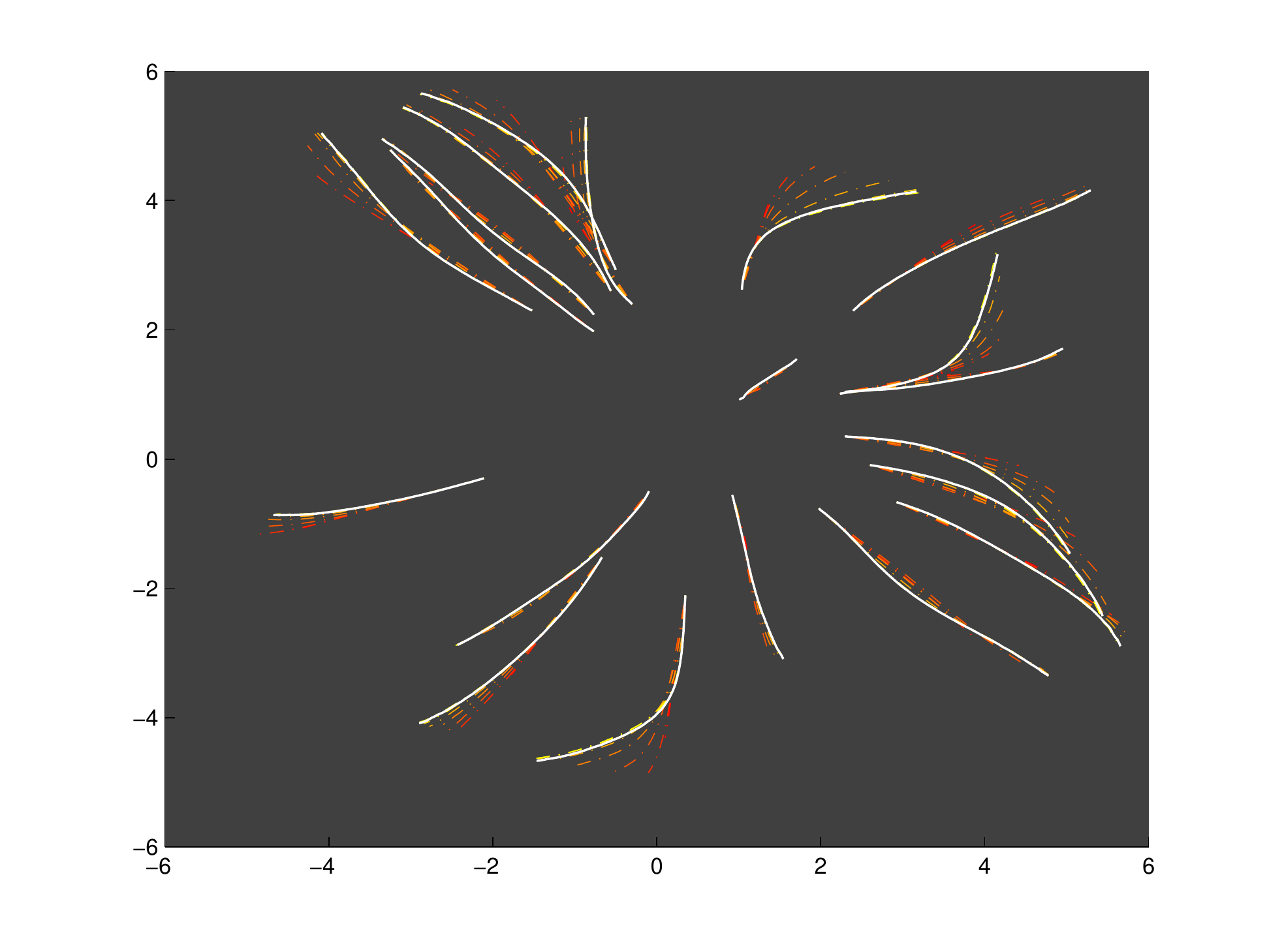}
\end{center}
\caption{Different reconstructions of a potential for different values of $M$. On the left column: the true kernel in white and its reconstructions for different $M$; the brighter the curve, the larger the $M$. On the right column: the true trajectories of the agents in white, the trajectories associated to the reconstructed potentials with the same color.}\label{Mconstr1}
\end{figure}

So far we have no  {\it a priori} criteria to sieve those values of $M$, which enable a successful reconstruction of a potential $a \in X$. However,  the tuning {\it a posteriori} of the parameter $M>0$ turns out to be rather easy. In fact, for $N$ fixed the minimizers $\widehat a_{N,M}$ have the property that the map
$$
 M \mapsto   \mathcal E^{[a],N}(\widehat a_{N,M})
$$
is monotonically decreasing as a function of the constraint parameter $M$ and it becomes constant for $M\geq M^*$, for $M^*>0$ which, as shown empirically, does not depend on $N$. This special value $M^*$ is indeed the ``right" parameter for the $L_\infty$ bound. For such a choice, we show that, if we let $N$ grow, the minimizers $\widehat{a}_N$ approximates better and better the unknown potential $a$. 
Figure \ref{Mconstr} documents precisely this expected behavior.

\begin{figure}[h!]
\begin{center}
\includegraphics[width=0.8\textwidth]{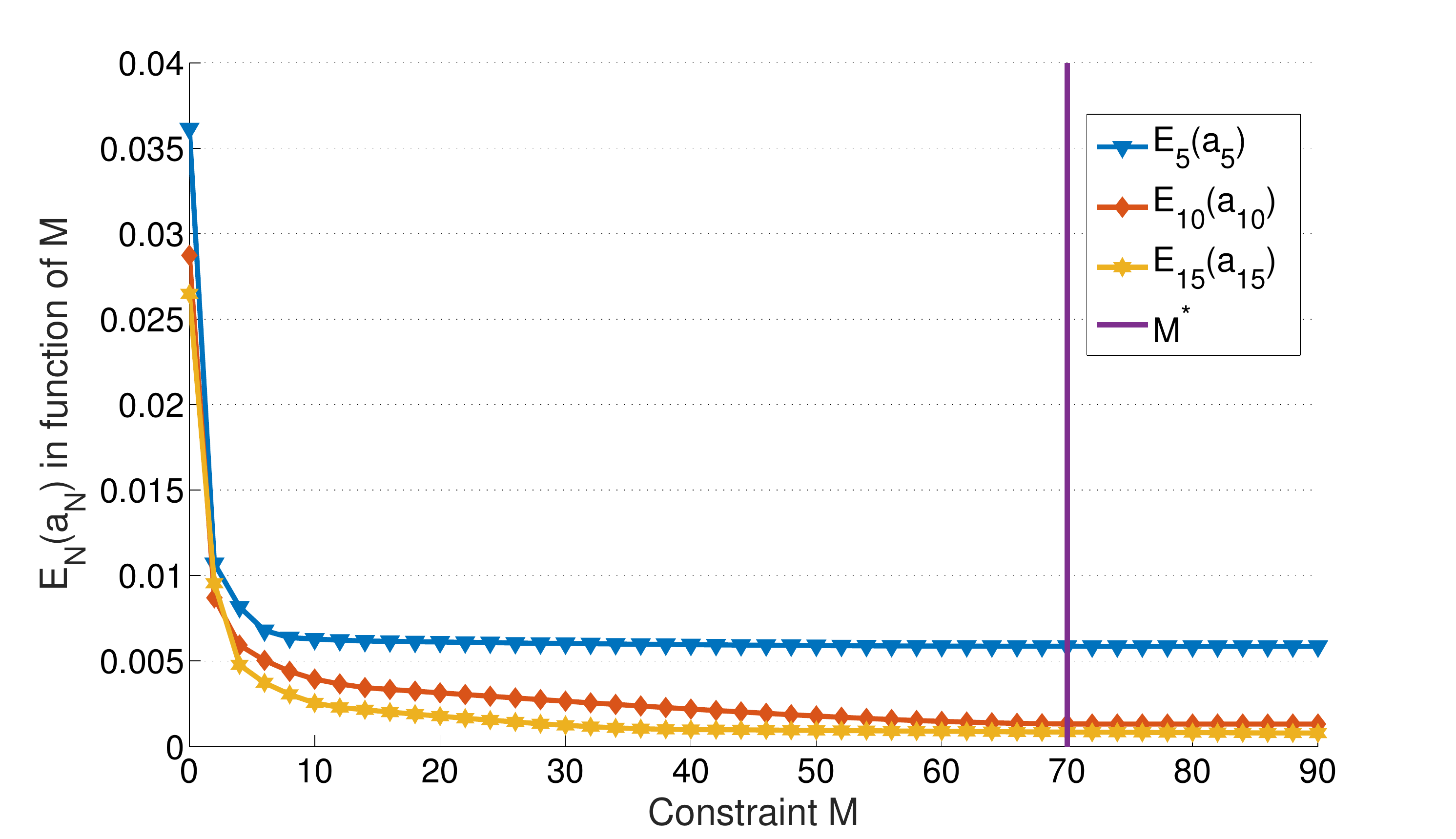}
\end{center}
\caption{Behavior of the error $\mathcal E^{[a],N}(\widehat a_{N,M})$ as a function of the constraint $M$ for different values of $N$.}\label{Mconstr}
\end{figure}

\subsection{Montecarlo-like reconstructions for $N$ fixed}

We mimic now the  mean-field reconstruction strategy, by multiple randomized draw of $N$ particles as initial conditions i.i. distributed according to $\mu_0$ for $N$ fixed and relatively small. Indeed, problem \eqref{problem2} can swiftly become computationally unfeasible when $N$ is moderately large, also because the dimension of the approximating subspaces $V_N$ needs to increase with $N$ too.  We consider, for a fixed $N$, several discrete initial data $(\mu^N_{0,\theta})_{\theta= 1}^{\Theta}$ all independently drawn from the same distribution $\mu_0$ (in our case, the $d$-dimensional cube $[-L,L]^d$). For every $\theta = 1,\ldots,\Theta$, we simulate the system until time $T$ and, with the trajectories we obtained, we solve problem \eqref{problem2}. At the end of this procedure, we have a family of reconstructed potentials $(\widehat{a}_{N,\theta})_{\theta= 1}^{\Theta}$, all approximating the same true kernel $a$. Empirically averaging these potentials, we obtain an approximation
\begin{align*}
\widehat{a}_N(r) = \frac{1}{\Theta} \sum^{\Theta}_{\theta = 1}\widehat{a}_{N,\theta}(r), \quad \text{ for every } r \in [0,R],
\end{align*}
which we claim to be a better approximation to the true kernel $a$ than any single snapshots. To support this claim, we report in Figure \ref{fixedN} the outcome of an experiment whose data can be found in Table \ref{tab:fig5}.

\begin{table}[h]
\begin{center}
\begin{tabular}{ |c|c|c|c|c|c|c| }
\hline
  $d$ & $L$ & $T$ & $M$ & $N$ & $D(N)$ & $\Theta$ \\
\hline
\hline
  $2$ & $2$ & $0.5$ & $1000$ & $50$ & $150$ & 5  \\
\hline
\end{tabular}
\end{center}
\vspace{-0.5cm}
\caption{Parameter values for the experiment of Figure \ref{fixedN}.} \label{tab:fig5} 
\end{table}

\begin{figure}[h!]
\begin{center}
\hspace{-1cm}
\includegraphics[width=0.56\textwidth]{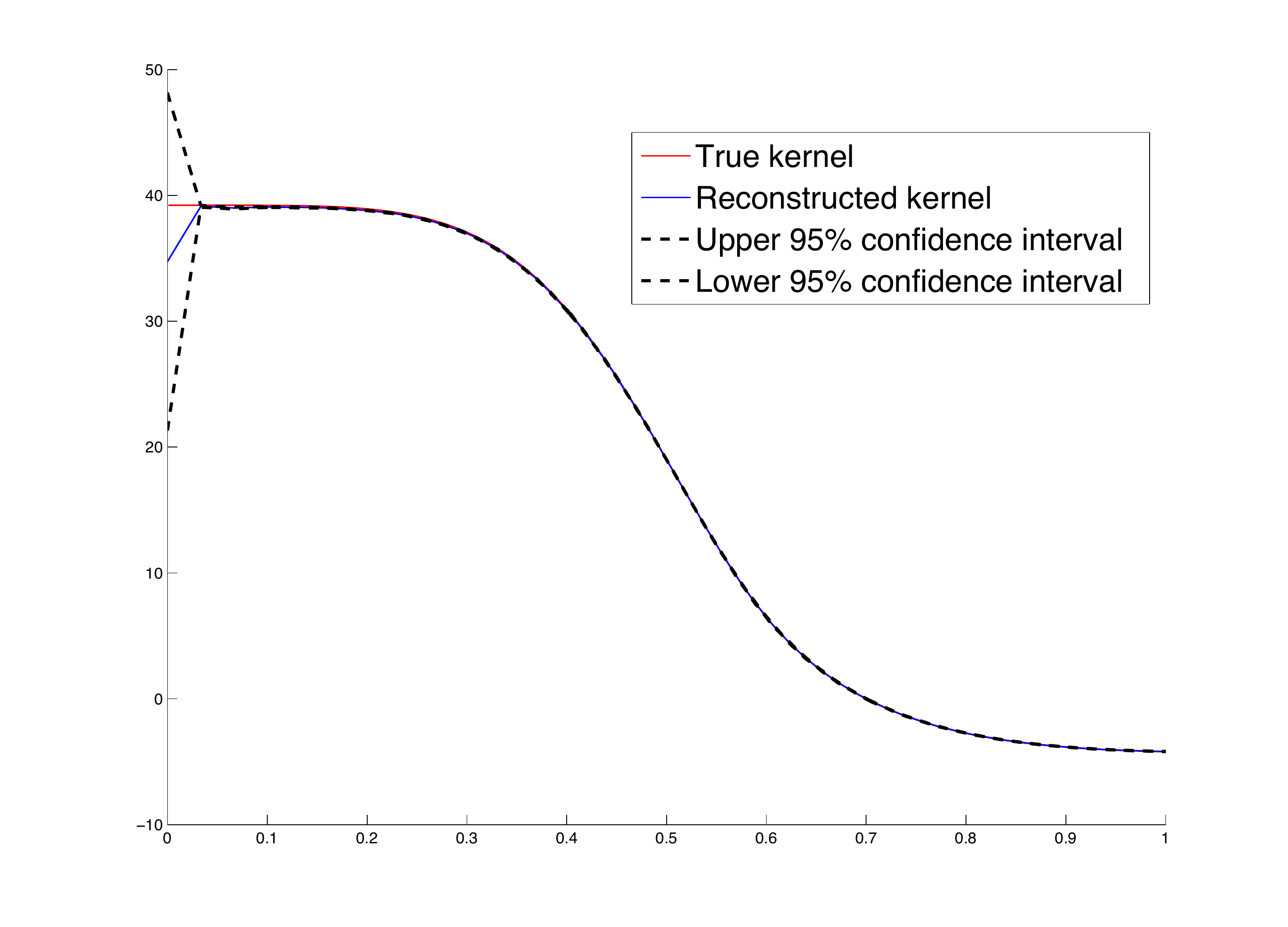}\hspace{-0.9cm}
\includegraphics[width=0.55\textwidth]{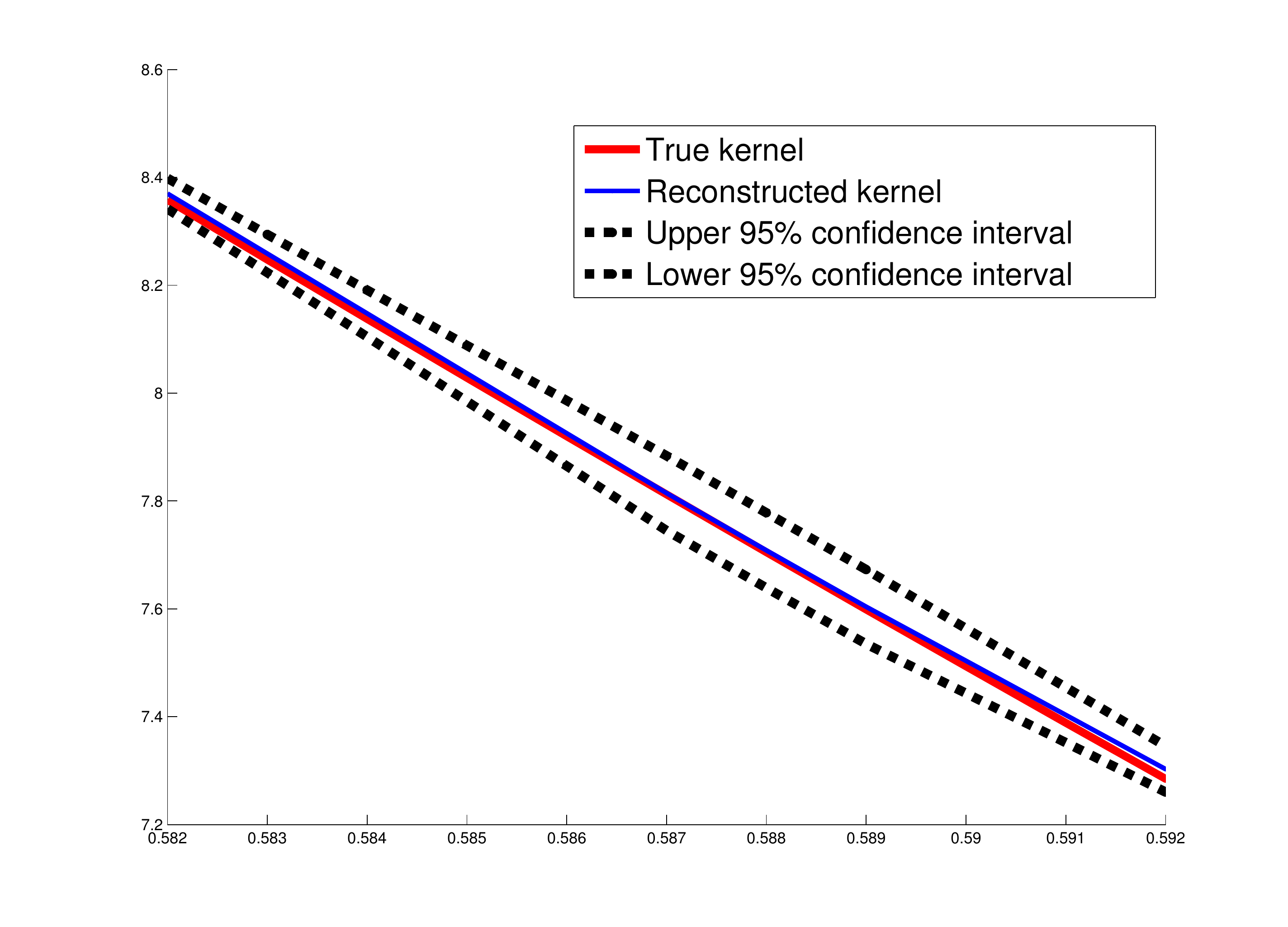}
\end{center}
\caption{Reconstruction of $a$ obtained by averaging 5 solutions of the minimization of $\mathcal{E}^{[a],N}_\Delta$ for $N = 50$. In red: the unknown kernel. In blue: the average of reconstructions. In black: 95\% confidence interval for the parameter estimates returned by the Matlab function \texttt{normfit}. The figure on the right shows a zoom of the left figure.}\label{fixedN}
\end{figure}

\section*{Acknowledgement}

Mattia Bongini, Massimo Fornasier, and Markus Hansen acknowledge the financial support of the ERC-Starting Grant (European Research Council, 306274) “High-Dimensional Sparse Optimal Control” (HDSPCONTR). Mauro Maggioni acknowledges the support of ONR-N00014-12-1-0601 and NSF-ATD/DMS-12-22567. The authors acknowledge the hospitality and the financial support of the University of Bonn and the Hausdorff Center for Mathematics during the Hausdorff Trimester Program ``Mathematics of Signal Processing" for the final preparation of this work.

\section{Appendix}

 Although  similar results on the limit relationship between ODE systems of the type \eqref{eq:discrdyn} and their mean-field equations \eqref{eq:contdyn}
appear in different forms in other papers, see, e.g., \cite{AGS,CanCarRos10,13-Carrillo-Choi-Hauray-MFL,MFOC}, in this Appendix we collect them for our specific setting in a nutshell for the sake of being self-contained and for the convenience of those readers less familiar with these properties of evolutive systems.

\subsection{Standard results on existence and uniqueness for ODE}\label{ap00}

For the reader's convenience and for the sake of a self-contained presentation, we start by briefly recalling some general, well-known results about solutions to Carath{\'e}odory differential equations. We fix a domain $\Omega \subset \R^d$, a Carath{\'e}odory function $g\colon[0,T]\times \Omega \to \R^d$, i.e. the function $g$ is continuous in $y$ and measurable in $t$, and $0<\tau \le T$. A function $y\colon [0,\tau]\to \Omega$ is called a solution of the Carath{\'e}odory differential equation
\begin{equation}\label{cara}
\dot y(t)=g(t, y(t))
\end{equation}
on $[0,\tau]$ if and only if $y$ is absolutely continuous and \eqref{cara} is satisfied a.e.\ in $[0,\tau]$.
The following well-known local existence result holds, see \cite[Chapter 1, Theorem 1]{Fil} .

\begin{theorem}\label{cara-local}
Fix $T > 0$ and $y_0 \in \R^d$. Suppose that there exists a compact subset $\Omega$ of $\R^d$ such that $y_0 \in \textup{int}(\Omega)$ and there exists $m_{\Omega} \in L_1([0,T])$ for which it holds
\begin{align}\label{l1}
|g(t,y)|\le m_{\Omega}(t),
\end{align}
for a.e.\ $t \in [0,T]$ and for all $y \in \Omega$. Then there exists a $\tau > 0$ and a solution $y(t)$ of \eqref{cara} defined on the interval $[0,\tau]$ which satisfies $y(0)=y_0$. 
\end{theorem}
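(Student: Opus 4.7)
The plan is to reformulate the ODE as the fixed-point equation
\[
y(t) = y_0 + \int_0^t g(s, y(s))\, ds,
\]
and then apply Schauder's fixed point theorem on a suitable closed convex subset of $C([0,\tau],\R^d)$. First I would pick $r>0$ with $\overline{B(y_0,r)} \subset \Omega$, which is possible because $y_0 \in \textup{int}(\Omega)$. Since $m_\Omega \in L_1([0,T])$, the map $\tau \mapsto \int_0^\tau m_\Omega(s)\, ds$ is absolutely continuous and vanishes at $0$, so I can choose $\tau \in (0,T]$ small enough that $\int_0^\tau m_\Omega(s)\, ds \le r$. Define
\[
K = \bigl\{ y \in C([0,\tau],\R^d)\,:\, y(0)=y_0,\ |y(t)-y_0| \le r\ \text{for all } t \in [0,\tau]\bigr\},
\]
which is a nonempty, closed, convex, bounded subset of $C([0,\tau],\R^d)$, and the operator $F: K \to C([0,\tau],\R^d)$ by $F(y)(t) = y_0 + \int_0^t g(s, y(s))\, ds$.

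Next I would verify the three standard hypotheses of Schauder's theorem. Well-definedness of $F$: for $y \in K$ one has $y(s) \in \overline{B(y_0,r)}\subset\Omega$, and since $g$ is Carath\'eodory, $s \mapsto g(s,y(s))$ is measurable; the bound $|g(s,y(s))|\le m_\Omega(s)$ makes the integrand $L_1$. Self-mapping: $|F(y)(t)-y_0| \le \int_0^t m_\Omega(s)\, ds \le r$, so $F(K) \subset K$. Relative compactness of $F(K)$: by the same bound, $F(K)$ is uniformly bounded, and for $0 \le s < t \le \tau$
\[
|F(y)(t)-F(y)(s)| \le \int_s^t m_\Omega(u)\, du,
\]
which is independent of $y$ and tends to $0$ as $|t-s|\to 0$ by absolute continuity of the Lebesgue integral, yielding equicontinuity; Ascoli--Arzel\`a then gives relative compactness.

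The step I expect to be the most delicate is the continuity of $F$ in the uniform topology. Suppose $y_n \to y$ uniformly with $y_n, y \in K$. For a.e.\ $s \in [0,\tau]$ the function $g(s,\cdot)$ is continuous, so $g(s,y_n(s)) \to g(s,y(s))$ pointwise a.e.; moreover $|g(s,y_n(s))-g(s,y(s))| \le 2 m_\Omega(s) \in L_1([0,\tau])$. Dominated convergence then yields $\int_0^\tau |g(s,y_n(s))-g(s,y(s))|\, ds \to 0$, whence $\|F(y_n)-F(y)\|_{C([0,\tau])} \to 0$. With all hypotheses of Schauder's theorem in place, I obtain a fixed point $y \in K$, i.e.\ $y(t)=y_0+\int_0^t g(s,y(s))\, ds$ on $[0,\tau]$. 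Finally, absolute continuity of this integral representation gives that $y$ is absolutely continuous and $\dot y(t) = g(t,y(t))$ for a.e.\ $t \in [0,\tau]$, with the initial condition $y(0)=y_0$ built into $K$; this delivers the claimed Carath\'eodory solution.
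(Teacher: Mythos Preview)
Your proof is correct and complete: the Schauder fixed-point argument on the integral reformulation is the standard route to Carath\'eodory local existence, and you have handled all the necessary checks (measurability of the composition, self-mapping, equicontinuity via the common modulus $\int_s^t m_\Omega$, continuity of $F$ by dominated convergence).

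The paper itself does not prove this theorem at all; it simply cites \cite[Chapter 1, Theorem 1]{Fil} and moves on. So there is no ``paper's own proof'' to compare against---you have supplied what the authors chose to outsource. For what it is worth, Filippov's argument in that reference is essentially the same Schauder-type construction you give, so your approach is also the classical one.
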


The result can be extended to a global existence as follows.

\begin{theorem}\label{cara-global}
Consider an interval $[0,T]$  on the real line and a  Carath{\'e}odory function $g\colon[0,T]\times \R^d \to \R^d$.
Assume that there exists a constant $C > 0$ such that the function $g$ satisfies the condition
\begin{align}\label{ttz}
|g(t,y)|\le C(1+|y|),
\end{align}
for a.e.\ $t \in [0,T]$ and every $y \in \mathbb R^d$. Then there exists a solution $y(t)$ of \eqref{cara} defined on the whole interval $[0,T]$, which satisfies $y(0)=y_0$. Moreover, for every $t \in [0,T]$, any solution satisfies
\begin{equation}\label{gron}
|y(t)|\le \Big(|y_0|+ Ct\Big) \,e^{Ct}.
\end{equation}
\end{theorem}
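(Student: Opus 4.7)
The plan is to first derive the a priori estimate \eqref{gron} by Gronwall's inequality, and then use this bound to bootstrap the local existence guaranteed by Theorem \ref{cara-local} into a global one on $[0,T]$.

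\textbf{Step 1 (a priori bound).} I would start by assuming that $y:[0,\tau]\to\R^d$ is any solution of \eqref{cara} with $y(0)=y_0$ defined on some subinterval $[0,\tau]\subseteq [0,T]$. Integrating \eqref{cara} and applying the sublinear growth assumption \eqref{ttz} would give
\begin{align*}
|y(t)| \le |y_0| + \int_0^t |g(s,y(s))|\,ds \le |y_0| + Ct + C\int_0^t |y(s)|\,ds,
\end{align*}
for all $t\in [0,\tau]$. A direct application of Gronwall's inequality (in the form valid for the increasing comparison function $t\mapsto |y_0|+Ct$) would then deliver the bound $|y(t)|\le (|y_0|+Ct)e^{Ct}$, which is \eqref{gron}. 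Crucially, the upper bound $M_T := (|y_0|+CT)e^{CT}$ is independent of the length $\tau$ of the interval on which the solution is defined.

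\textbf{Step 2 (local existence to start).} I would then apply Theorem \ref{cara-local} with the compact set $\Omega = \overline{B(0,M_T+1)}$, so that $y_0\in\mathrm{int}(\Omega)$. By \eqref{ttz}, the function $g$ is dominated on $[0,T]\times \Omega$ by the constant $m_{\Omega}(t):=C(1+M_T+1)\in L_1([0,T])$, so \eqref{l1} is satisfied. Theorem \ref{cara-local} would then produce a local solution defined on some $[0,\tau_0]$.

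\textbf{Step 3 (extension to $[0,T]$).} The main point is now a standard maximal-interval argument. Let $\tau^{*}$ be the supremum of all $\tau\le T$ such that \eqref{cara} admits a solution on $[0,\tau]$ with $y(0)=y_0$. I would argue by contradiction that $\tau^{*}=T$. If instead $\tau^{*}<T$, then by Step 1 any solution $y$ on $[0,\tau^{*})$ would satisfy $|y(t)|\le M_T$, and absolute continuity together with the $L_1$ bound on $|g|$ would force $y$ to extend by continuity to $\tau^{*}$ with $|y(\tau^{*})|\le M_T$. Applying Theorem \ref{cara-local} again starting from $(\tau^{*}, y(\tau^{*}))$, with the same compact set $\Omega$, one could prolong the solution slightly beyond $\tau^{*}$, contradicting maximality. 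Hence $\tau^{*}=T$, and the solution extends to all of $[0,T]$, where it again satisfies \eqref{gron} by Step 1.

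\textbf{Main obstacle.} There is no genuine difficulty: the result is standard. The one technical point that deserves care is the continuity extension at $\tau^{*}$ in Step 3, where one must use the dominated bound $|g(t,y(t))|\le C(1+M_T)$ to show that $y(t)$ has a limit as $t\uparrow \tau^{*}$, so that the restart of Theorem \ref{cara-local} from this limit point is legitimate.
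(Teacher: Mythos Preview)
Your proposal is correct and follows essentially the same approach as the paper: derive the a priori bound \eqref{gron} via Gronwall, apply Theorem \ref{cara-local} on a ball of radius slightly larger than $(|y_0|+CT)e^{CT}$, and use the a priori bound to extend the local solution to all of $[0,T]$. The only cosmetic difference is that the paper invokes a continuation theorem from Filippov (\cite[Chapter 1, Theorem 4]{Fil}) for the extension step, whereas you spell out the standard maximal-interval argument by hand.
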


\begin{proof}
Set $\rho:= (|y_0|+CT) \,e^{CT}$. Consider now a ball $\Omega \subset \mathbb R^n$ centered at $0$ with radius strictly greater than $\rho$. Existence of a local solution defined on an interval $[0,\tau]$ and taking values in $\Omega$ follows now easily from \eqref{ttz} and Theorem \ref{cara-local}. If \eqref{ttz} holds, any solution of \eqref{cara} with initial datum $y_0$ satisfies
$$
|y(t)|\le |y_0|+ Ct+C\int_0^t |y(s)|\,ds
$$
for every $t \in [0,\tau]$, therefore \eqref{gron} follows from Gronwall's inequality. In particular the graph of a solution $y(t)$ cannot reach the boundary of $[0,T]\times B(0,|y_0|+CTe^{CT})$ unless $\tau=T$, therefore the continuation of the local solution to a global one on $[0,T]$ follows, for instance, from \cite[Chapter 1, Theorem 4]{Fil}.
\end{proof}

A further application of Gronwall's inequality yields the following results on continuous dependence on the initial data.

\begin{proposition}\label{le:uniquecara}
Let $g_1$ and $g_2\colon[0,T]\times \R^n \to \R^n$ be Carath{\'e}odory functions both satisfying \eqref{ttz} for the same  constant $C > 0$. Let $r>0$ and define 
\begin{align*}
\rho_{r, C, T}:=\Big(r+ CT\Big) \,e^{CT}\,.
\end{align*}
Assume in addition that there exists a constant $L > 0$ satisfying
\begin{align*}
|g_1(t, y_1)-g_1(t, y_2)|\le L|y_1-y_2|
\end{align*}
for every $t \in [0, T]$ and every $y_1$, $y_2$ such that $|y_i|\le \rho_{r, C, T}$, $i=1,2$.
Then, if $\dot y_1(t)=g_1(t, y_1(t))$, $\dot y_2(t)=g_2(t, y_2(t))$, $|y_1(0)|\le r$ and $|y_2(0)|\le r$, one has
\begin{equation}\label{gronvalla}
|y_1(t)-y_2(t)|\le e^{Lt}\left(|y_1(0)-y_2(0)|+\int_0^t \|g_1(s, \cdot)-g_2(s, \cdot)\|_{L_\infty(B(0, \rho_{r, C, T}))} \,ds \right)
\end{equation}
for every $t \in [0, T]$.
\end{proposition}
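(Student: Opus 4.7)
The plan is to derive the bound \eqref{gronvalla} from the integral form of the two ODEs and Gr\"onwall's inequality, with the single preliminary observation that both trajectories remain in the ball $B(0,\rho_{r,C,T})$, which is where the Lipschitz hypothesis on $g_1$ is available.

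First I would invoke Theorem \ref{cara-global} applied to each $g_i$ (and each $y_i(0)$): since both $g_i$ satisfy \eqref{ttz} with the same constant $C$, and $|y_i(0)|\le r$, the a~priori estimate \eqref{gron} yields
\begin{align*}
|y_i(t)|\le (|y_i(0)|+Ct)\,e^{Ct}\le (r+CT)\,e^{CT}=\rho_{r,C,T},\qquad t\in[0,T],\ i=1,2.
\end{align*}
This is the key enabling observation, because it guarantees that throughout $[0,T]$ the pair $(y_1(s),y_2(s))$ lies in the region where the Lipschitz estimate on $g_1$ may be invoked, and where $\|g_1(s,\cdot)-g_2(s,\cdot)\|_{L_\infty(B(0,\rho_{r,C,T}))}$ is the relevant sup-norm.

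Next I would write each equation in integral form and subtract:
\begin{align*}
y_1(t)-y_2(t)=\bigl(y_1(0)-y_2(0)\bigr)+\int_0^t\bigl[g_1(s,y_1(s))-g_2(s,y_2(s))\bigr]\,ds.
\end{align*}
Using the splitting
\begin{align*}
g_1(s,y_1(s))-g_2(s,y_2(s))=\bigl[g_1(s,y_1(s))-g_1(s,y_2(s))\bigr]+\bigl[g_1(s,y_2(s))-g_2(s,y_2(s))\bigr],
\end{align*}
the first bracket is controlled by $L\,|y_1(s)-y_2(s)|$ thanks to the Lipschitz assumption on $g_1$ (applicable since both arguments lie in $B(0,\rho_{r,C,T})$), and the second bracket is controlled by $\|g_1(s,\cdot)-g_2(s,\cdot)\|_{L_\infty(B(0,\rho_{r,C,T}))}$ since $y_2(s)\in B(0,\rho_{r,C,T})$. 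Taking norms and using the triangle inequality yields
\begin{align*}
|y_1(t)-y_2(t)|\le |y_1(0)-y_2(0)|+\int_0^t\|g_1(s,\cdot)-g_2(s,\cdot)\|_{L_\infty(B(0,\rho_{r,C,T}))}\,ds+L\int_0^t|y_1(s)-y_2(s)|\,ds.
\end{align*}

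Finally I would apply the integral form of Gr\"onwall's inequality to the nondecreasing forcing term $\varphi(t):=|y_1(0)-y_2(0)|+\int_0^t\|g_1(s,\cdot)-g_2(s,\cdot)\|_{L_\infty(B(0,\rho_{r,C,T}))}\,ds$, obtaining $|y_1(t)-y_2(t)|\le e^{Lt}\varphi(t)$, which is exactly \eqref{gronvalla}. There is no real obstacle here; the only subtlety worth stating explicitly is the uniform a~priori confinement to $B(0,\rho_{r,C,T})$, without which the Lipschitz hypothesis would be unusable, and the fact that both solutions exist on the full interval $[0,T]$ by Theorem \ref{cara-global}.
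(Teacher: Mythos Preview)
Your proof is correct and follows essentially the same route as the paper's: the same add-and-subtract splitting of $g_1(s,y_1(s))-g_2(s,y_2(s))$, followed by Gr\"onwall applied to the monotone forcing term. The only difference is that you make explicit the a~priori confinement $|y_i(t)|\le\rho_{r,C,T}$ via \eqref{gron}, which the paper's proof uses tacitly; this is a welcome clarification rather than a change of approach.
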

\begin{proof}
We can bound $|y_1(t) - y_2(t)|$ from above as follows:
\begin{align*}
|y_1(t) - y_2(t)| &\leq |y_1(0) - y_2(0)| + \int^t_0 |\dot{y}_1(s) - \dot{y}_2(s)| ds \\
&= |y_1(0) - y_2(0)| \\
& \quad + \int^t_0 |g_1(s, y_1(s)) - g_1(s, y_2(s)) + g_1(s, y_2(s)) - g_2(s, y_2(s))| ds \\
& \leq |y_1(0) - y_2(0)| + \int_0^t \|g_1(s, \cdot)-g_2(s, \cdot)\|_{L_\infty(B(0, \rho_{r, C, T}))} \,ds \\
& \quad  + L \int^t_0|y_1(s) - y_2(s)| ds.
\end{align*}
Since the function $\alpha(t) = |y_1(0) - y_2(0)| + \int_0^t \|g_1(s, \cdot)-g_2(s, \cdot)\|_{L_\infty(B(0, \rho_{r, C, T}))} \,ds$ is increasing, an application of Gronwall's inequality gives \eqref{gronvalla}, as desired.
\end{proof}

\subsection{Technical results for the mean-field limit}\label{ap1}

Let us start this section with some lemmas concerning the growth and the Lipschitz continuity of the right-hand side of  \eqref{eq:discrdyn} .

\begin{lemma}\label{p-estkernel}
Let $a\in X$ and $\mu \in \PP(\R^d)$. Then for all $y \in \R^d$ the following hold:
\begin{align*}
|(\Fun{a} * \mu)(y)| \leq \|a\|_{L_{\infty}(\R_+)}\left( | y | + \int_{\R^d} | x | d\mu(x) \right).
\end{align*}
\end{lemma}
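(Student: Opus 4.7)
The plan is straightforward: unpack the definition of $\Fun{a}$, pull the $L_\infty$ norm of $a$ outside the integral, and control the remaining linear factor by the triangle inequality.

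First I would write
\[
(\Fun{a}\ast\mu)(y) = \int_{\R^d} \Fun{a}(y-x)\,d\mu(x) = -\int_{\R^d} a(|y-x|)(y-x)\,d\mu(x),
\]
recalling the definition $\Fun{a}(z) = -a(|z|)z$ from the introduction. Taking absolute values inside the integral yields
\[
|(\Fun{a}\ast\mu)(y)| \leq \int_{\R^d} |a(|y-x|)|\,|y-x|\,d\mu(x) \leq \|a\|_{L_\infty(\R_+)} \int_{\R^d} |y-x|\,d\mu(x),
\]
where the second inequality uses the hypothesis $a \in X \subseteq L_\infty(\R_+)$.

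Finally, a single application of the triangle inequality $|y-x|\leq |y|+|x|$ together with the fact that $\mu$ is a probability measure (so $\int d\mu = 1$) gives
\[
\int_{\R^d} |y-x|\,d\mu(x) \leq |y| + \int_{\R^d} |x|\,d\mu(x),
\]
and combining the two estimates concludes the proof. There is no real obstacle here; the lemma is a routine sublinearity bound whose only role is to later justify the applicability of the global existence result (Theorem \ref{cara-global}) to the ODE system \eqref{eq:discrdyn} and its single-agent counterpart \eqref{eq:transpdyn}, where the finiteness of the first moment $\int |x|\,d\mu(x)$ follows from $\mu \in \PP(\R^d)$.
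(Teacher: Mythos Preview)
Your proof is correct and follows exactly the argument the paper has in mind; the paper's own proof is a single sentence (``It follows directly from $a\in L_\infty(\R_+)$''), and your write-up simply makes that sentence explicit via the definition of $\Fun{a}$, the $L_\infty$ bound, and the triangle inequality.
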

\begin{proof}
It follows directly from $a \in L_{\infty}(\R_+)$.
\end{proof}

\begin{lemma}\label{p-Floclip}
If $a\in X$ then $\Fun{a} \in \Lip_\loc(\R^d)$.
\end{lemma}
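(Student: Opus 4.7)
The plan is to reduce the local Lipschitz continuity of $F^{[a]}(z)=-a(|z|)z$ on $\mathbb{R}^d$ to the local Lipschitz continuity of the scalar function $a$ on $\mathbb{R}_+$, which is built into the definition of the space $X$ (since $a\in W^{1}_{\infty,\mathrm{loc}}(\mathbb{R}_+)$ implies $a\in\mathrm{Lip}_{\mathrm{loc}}(\mathbb{R}_+)$ via the fundamental theorem of calculus, cf.\ \cite[Theorem 2.8]{AFP00}).

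First I would fix an arbitrary compact set; by monotonicity it suffices to treat closed balls $\overline{B(0,R)}\subset\mathbb{R}^d$ for arbitrary $R>0$. Set $K=[0,R]\subset\mathbb{R}_+$, so that $|z|\in K$ whenever $z\in\overline{B(0,R)}$. Then for any $z_1,z_2\in\overline{B(0,R)}$, I would apply the classical add-and-subtract trick
\begin{align*}
|F^{[a]}(z_1)-F^{[a]}(z_2)|
&=\bigl|a(|z_1|)z_1-a(|z_2|)z_2\bigr|\\
&\leq |a(|z_1|)|\,|z_1-z_2|+|z_2|\,\bigl|a(|z_1|)-a(|z_2|)\bigr|.
\end{align*}
The first term is controlled by $\|a\|_{L_\infty(\mathbb{R}_+)}\,|z_1-z_2|$, which is finite since $a\in L_\infty(\mathbb{R}_+)$. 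For the second term I would use $|z_2|\leq R$ together with the reverse triangle inequality $\bigl||z_1|-|z_2|\bigr|\leq|z_1-z_2|$, and then the local Lipschitz continuity of $a$ on $K$ to estimate
\begin{equation*}
|z_2|\,\bigl|a(|z_1|)-a(|z_2|)\bigr|\leq R\,\mathrm{Lip}_K(a)\,|z_1-z_2|.
\end{equation*}
Combining these two bounds produces
\begin{equation*}
|F^{[a]}(z_1)-F^{[a]}(z_2)|\leq\bigl(\|a\|_{L_\infty(\mathbb{R}_+)}+R\,\mathrm{Lip}_K(a)\bigr)\,|z_1-z_2|,
\end{equation*}
which is exactly a Lipschitz estimate on $\overline{B(0,R)}$ with a finite constant. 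Since $R>0$ was arbitrary, this gives $F^{[a]}\in\mathrm{Lip}_{\mathrm{loc}}(\mathbb{R}^d)$.

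There is no real obstacle here; the only point requiring any care is that the Lipschitz constant depends on $R$ through both $\mathrm{Lip}_K(a)$ (which is finite precisely because $a\in W^{1}_{\infty,\mathrm{loc}}(\mathbb{R}_+)$) and the explicit factor $R$ coming from the linear factor $z$ in $F^{[a]}$. This is why one can only expect local, rather than global, Lipschitz continuity of $F^{[a]}$ even though $a$ itself is globally bounded.
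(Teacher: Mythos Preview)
Your proof is correct and follows essentially the same approach as the paper: both use the add-and-subtract decomposition $a(|z_1|)z_1 - a(|z_2|)z_2 = a(|z_1|)(z_1-z_2) + (a(|z_1|)-a(|z_2|))z_2$, bound the first term via $a\in L_\infty(\R_+)$ and the second via the local Lipschitz continuity of $a$. If anything, your version is slightly more careful in distinguishing the compact set $\overline{B(0,R)}\subset\R^d$ from the interval $K=[0,R]\subset\R_+$ on which $\mathrm{Lip}_K(a)$ is taken, whereas the paper overloads the symbol $K$ for both.
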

\begin{proof}
For any compact set $K \subset \R^d$ and for every $x,y \in K$ it holds
\begin{align*}
|\Fun{a}(x) - \Fun{a}(y)| &= |a(|x|)x - a(|y|)y| \\
&\leq |a(|x|)| |x-y| + |a(|x|) - a(|y|)| |y| \\
&\leq (|a(|x|)| + \Lip_K(a) |y|) |x-y|,
\end{align*}
and since $a \in L_{\infty}(\R_+)$ and $y \in K$, it follows that $\Fun{a}$ is locally Lipschitz with Lipschitz constant depending only on $a$ and $K$.
\end{proof}

\begin{lemma}\label{p-Fmuloclip}
If $a\in X$ and $\mu \in \mathcal{P}_c(\R^d)$ then $\Fun{a}*\mu \in \Lip_{\loc}(\R^d)$.
\end{lemma}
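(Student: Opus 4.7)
The plan is to directly reduce the claim to the previous Lemma \ref{p-Floclip} (local Lipschitz continuity of $\Fun{a}$) using the compactness of $\supp(\mu)$.

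First I would fix a compact set $K \subset \R^d$ and pick $R_0 > 0$ such that $\supp(\mu) \subseteq B(0,R_0)$, which is possible since $\mu \in \mathcal{P}_c(\R^d)$. Then for any $x_1, x_2 \in K$ and any $y \in \supp(\mu)$, the points $x_1 - y$ and $x_2 - y$ lie in the compact set $K' := K + B(0,R_0)$ (the Minkowski sum, which is compact as the continuous image of $K \times \overline{B(0,R_0)}$).

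Next, by Lemma \ref{p-Floclip} there exists a constant $L_{K'} = \Lip_{K'}(\Fun{a}) < \infty$ depending only on $a$ and $K'$ such that $|\Fun{a}(z_1) - \Fun{a}(z_2)| \leq L_{K'} |z_1 - z_2|$ for all $z_1, z_2 \in K'$. Then I would estimate
\begin{align*}
|(\Fun{a}*\mu)(x_1) - (\Fun{a}*\mu)(x_2)|
    &\leq \int_{\R^d} |\Fun{a}(x_1-y) - \Fun{a}(x_2-y)| d\mu(y) \\
    &\leq L_{K'} |x_1 - x_2| \int_{\R^d} d\mu(y) = L_{K'} |x_1 - x_2|,
\end{align*}
where the second inequality uses that the integrand vanishes outside $\supp(\mu)$ and the previous bound applies on $\supp(\mu)$, and the final equality uses $\mu(\R^d) = 1$. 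This shows $\Fun{a}*\mu$ is Lipschitz on $K$ with constant $L_{K'}$, hence locally Lipschitz on $\R^d$.

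There is no real obstacle here: the argument is entirely standard and the compact support of $\mu$ is exactly what is needed to convert a local Lipschitz bound on $\Fun{a}$ into a local (in fact, uniform on any compact) Lipschitz bound for the convolution. The only mild care is in choosing $K'$ large enough to accommodate $x - y$ for all $x \in K$ and $y \in \supp(\mu)$, which is why the Minkowski sum enters.
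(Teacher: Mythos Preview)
Your proof is correct and arguably cleaner than the paper's. The paper does not invoke Lemma~\ref{p-Floclip} as a black box; instead it re-expands the difference $a(|x-z|)(x-z) - a(|y-z|)(y-z)$ by hand, splits it into two terms, and bounds them separately using $\Lip_{\widehat K}(a)$ and $\|a\|_{L_\infty(\R_+)}$, arriving at a constant of the form $C\,\Lip_{\widehat K}(a) + \|a\|_{L_\infty(\R_+)}$. Your route is more modular: by passing through Lemma~\ref{p-Floclip} you avoid repeating that splitting argument, and the Minkowski-sum set $K'$ plays exactly the role of the paper's $\widehat K$. The only thing the paper's direct computation buys is an explicit form of the Lipschitz constant in terms of $a$ rather than $\Fun{a}$, which is not needed for the statement as formulated.
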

\begin{proof}
For any compact set $K \subset \R^d$ and for every $x,y \in K$ it holds
\begin{align*}
|(\Fun{a}*\mu)(x) - (\Fun{a}*\mu)(y)| &= \left|\int_{\R^d}a(|x-z|)(x-z)d\mu(z) - \int_{\R^d}a(|y-z|)(y-z)d\mu(z)\right| \\
&\leq \int_{\R^d}|a(|x-z|)-a(|y-z|)|x-z|d\mu(z)\\
&\quad+ \int_{\R^d}|a(|y-z|)||x-y|d\mu(z) \\
&\leq \Lip_{\widehat{K}}(a)|x-y| \int_{\R^d}|x-z|d\mu(z) + \|a\|_{L_{\infty}(\R_+)}|x-y| \\
& \leq \left(C\Lip_{\widehat{K}}(a) + \|a\|_{L_{\infty}(\R_+)} \right)|x-y|,
\end{align*}
where $C$ is a constant depending on $K$, and $\widehat{K}$ is a compact set containing both $K$ and $\supp(\mu)$.
\end{proof}

\begin{proposition} Let us fix  $N \in \mathbb N$ and  $a \in X$. Then the system \eqref{eq:discrdyn} admits a unique global solution in $[0,T]$ for every initial datum $x^{N}_0 \in \R^{d \times N}$.
\end{proposition}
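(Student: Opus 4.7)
The plan is to rewrite \eqref{eq:discrdyn} as a single Carath\'eodory ODE on $\R^{dN}$ by setting $y=(x_1^N,\ldots,x_N^N)$ and $G(y)=(g_1(y),\ldots,g_N(y))$ with
\begin{align*}
g_i(y) = \frac{1}{N}\sum_{j=1}^N \Fun{a}(y_i-y_j),
\end{align*}
and then apply the general results on Carath\'eodory ODEs recalled in Section \ref{ap00}. Since $G$ is autonomous and continuous (note $\Fun{a}$ is continuous by the continuity implied by $a\in W^{1}_{\infty,\loc}(\R_+)$ together with boundedness at $0$, combined with the factor $z$), it is trivially Carath\'eodory.

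First I would verify the linear growth assumption \eqref{ttz}. Since $a\in L_\infty(\R_+)$, we have $|\Fun{a}(z)|=|a(|z|)|\,|z|\le \|a\|_{L_\infty(\R_+)}|z|$, hence
\begin{align*}
|g_i(y)| \le \frac{\|a\|_{L_\infty(\R_+)}}{N}\sum_{j=1}^N|y_i-y_j| \le 2\|a\|_{L_\infty(\R_+)}\,|y|,
\end{align*}
so $|G(y)|\le C(1+|y|)$ with $C=2\sqrt{N}\,\|a\|_{L_\infty(\R_+)}$ (or any suitable constant absorbing the norm equivalence on $\R^{dN}$). Theorem \ref{cara-global} then yields the existence of a global solution on $[0,T]$ with the a priori bound $|y(t)|\le(|y_0|+CT)e^{CT}$. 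In particular, every solution remains in a fixed ball $B(0,\rho_{|y_0|,C,T})$ of $\R^{dN}$.

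Next I would establish uniqueness. By Lemma \ref{p-Floclip}, $\Fun{a}\in \Lip_{\loc}(\R^d)$, so for any $R>0$ there is $L_R>0$ such that $|\Fun{a}(z)-\Fun{a}(z')|\le L_R|z-z'|$ whenever $|z|,|z'|\le 2R$. For $y,y'$ in the ball $B(0,\rho_{|y_0|,C,T})\subset\R^{dN}$, each difference $|y_i-y_j|$ and $|y'_i-y'_j|$ is controlled by $2\rho_{|y_0|,C,T}$, so
\begin{align*}
|g_i(y)-g_i(y')|\le \frac{L_{\rho_{|y_0|,C,T}}}{N}\sum_{j=1}^N\bigl(|y_i-y'_i|+|y_j-y'_j|\bigr)\le 2L_{\rho_{|y_0|,C,T}}\,|y-y'|,
\end{align*}
which shows that $G$ is Lipschitz on $B(0,\rho_{|y_0|,C,T})$. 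Applying Proposition \ref{le:uniquecara} with $g_1=g_2=G$ and $r=|y_0|$ gives
\begin{align*}
|y(t)-\tilde y(t)|\le e^{Lt}|y(0)-\tilde y(0)|
\end{align*}
for any two solutions with initial data of norm at most $|y_0|$. Choosing $\tilde y(0)=y(0)$ forces $y\equiv \tilde y$ on $[0,T]$, giving uniqueness.

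There is essentially no real obstacle here: the argument is a direct packaging of Theorem \ref{cara-global} (for global existence) and Proposition \ref{le:uniquecara} (for uniqueness), with the hypotheses verified by Lemma \ref{p-Floclip} and the boundedness of $a$. The only minor bookkeeping is the passage from the componentwise estimates on $\Fun{a}$ to the vector-valued estimates for $G$ on $\R^{dN}$, which is routine.
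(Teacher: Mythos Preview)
Your proposal is correct and follows essentially the same approach as the paper: both rewrite \eqref{eq:discrdyn} as a single autonomous ODE on $\R^{dN}$, verify the sublinear growth condition \eqref{ttz} from the boundedness of $a$, establish local Lipschitz continuity of the right-hand side via Lemma \ref{p-Floclip}, and then invoke Theorem \ref{cara-global} for global existence and Proposition \ref{le:uniquecara} for uniqueness. The only cosmetic difference is that the paper phrases the right-hand side through the empirical measure convolution $(\Fun{a}*\mu^N)(x_i)$ and cites Lemmas \ref{p-estkernel} and \ref{p-Fmuloclip}, whereas you write out the sums directly; the underlying estimates are identical.
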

\begin{proof}
Let us define  the
 function $g:\R^{d \times N} \rightarrow \R^{d \times N}$ defined for every $x=(x_1, \ldots, x_N)\in \R^{d \times N}$ as
\begin{align*}
g(x_1, \ldots, x_N) = ((\Fun{a}*\mu^N)(x_1),\ldots,(\Fun{a}*\mu^N)(x_N)),
\end{align*}
where $\mu^N$ is the empirical measure given by \eqref{eq:empmeas}. The system \eqref{eq:discrdyn} in the form \eqref{eq:discr1} can be rewritten compactly as
$$
\dot x (t) = g(x(t)).
$$
The function $g$ is clearly a  Carath{\'e}odory function and, by Lemma \ref{p-estkernel}, it clearly satisfies a sublinear growth condition of the type \eqref{ttz}. Moreover it is also locally Lipschitz continuous: indeed, for any $x_1, \ldots, x_N, y_1, \ldots, y_N \in K$ compact subset of $\R^d$, denoting with $\nu^N$ the empirical measure given by $y_1, \ldots, y_N$, it simply suffices to write
\begin{align*}
|g(x_1, \ldots, x_N) - g(y_1,\ldots,y_N)| &\leq \sum^N_{i = 1} |(\Fun{a}*\mu^N)(x_i) - (\Fun{a}*\nu^N)(y_i)| \\
&\leq \sum^N_{i = 1} \Bigg( |(\Fun{a}*\mu^N)(x_i) - (\Fun{a}*\mu^N)(y_i)| \\
&\quad \quad \quad \quad +|(\Fun{a}*\mu^N)(y_i) - (\Fun{a}*\nu^N)(y_i)| \Bigg).
\end{align*}
Applying Lemma \ref{p-Fmuloclip} to the first term and performing similar calculations to the ones in the proof of Lemma \ref{p-Floclip} on the second one, gives the desired result. We conclude the existence of a unique global solution by an application of Theorem \ref{cara-global} and its uniqueness follows from Lemma \ref{le:uniquecara}.
\end{proof}

The following preliminary result tells us that solutions to system \eqref{eq:discrdyn} are also solutions to the equation \eqref{eq:contdyn}, whenever conveniently rewritten.

\begin{proposition}\label{p-rewritten}
Let $N \in \N$ be given and $a \in X$. Let $(x^N_1, \ldots, x^N_N):[0,T] \rightarrow \R^{d\times N}$ be the solution of \eqref{eq:discrdyn} with initial datum $x^{N}_0 \in \R^{d \times N}$. Then the empirical measure $\mu^N:[0,T] \rightarrow \PP(\R^d)$ defined as in \eqref{eq:empmeas} is a solution of \eqref{eq:contdyn} with initial datum $\mu_{0}= \mu^N(0) \in \PC(\R^d)$.
\end{proposition}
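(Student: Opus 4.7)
The plan is to verify the three items in the definition of a solution of \eqref{eq:contdyn} one by one, drawing on the ODE system \eqref{eq:discrdyn} together with the reformulation \eqref{eq:discr1}.

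First I would establish the uniformly compact support. Lemma \ref{p-estkernel} together with the sublinear bound $|\Fun{a}(z)|\leq \|a\|_{L_\infty(\R_+)}|z|$ shows that the vector field $g(x_1,\dots,x_N)=\bigl((\Fun{a}\ast\mu^N)(x_1),\dots,(\Fun{a}\ast\mu^N)(x_N)\bigr)$ driving \eqref{eq:discrdyn} satisfies a sublinear growth condition of the type \eqref{ttz}. Hence Theorem \ref{cara-global} yields a Gr\"onwall-type bound of the form $|x_i^N(t)|\leq (|x_{0,i}^N|+Ct)e^{Ct}$, uniform in $i$ and $t\in[0,T]$. Choosing $R>0$ larger than the resulting bound for every $i=1,\dots,N$ gives $\supp(\mu^N(t))\subseteq B(0,R)$ for all $t\in[0,T]$.

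Next, for the $\W_1$-continuity, I would use the dual characterization \eqref{dualwass}: for any $\varphi\in\Lip(\R^d)$ with $\Lip_{\R^d}(\varphi)\leq 1$,
\begin{align*}
\left|\int_{\R^d}\varphi\,d\mu^N(t)-\int_{\R^d}\varphi\,d\mu^N(s)\right|
\leq \frac{1}{N}\sum_{i=1}^N |x^N_i(t)-x^N_i(s)|.
\end{align*}
Since each $x^N_i$ is absolutely continuous (actually Lipschitz, thanks to the uniform bound on $|\dot x^N_i|$ obtained from Lemma \ref{p-estkernel} and the support bound), taking the supremum over such $\varphi$ gives $\W_1(\mu^N(t),\mu^N(s))\leq \frac{1}{N}\sum_i |x_i^N(t)-x_i^N(s)|\to 0$ as $s\to t$.

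Finally, for the weak formulation, fix $\phi\in\mathcal{C}^\infty_c(\R^d;\R)$. Since each $x^N_i$ is absolutely continuous and $\phi$ is smooth, the chain rule gives
\begin{align*}
\frac{d}{dt}\int_{\R^d}\phi(x)\,d\mu^N(t)(x)
&= \frac{d}{dt}\frac{1}{N}\sum_{i=1}^N \phi(x^N_i(t))
= \frac{1}{N}\sum_{i=1}^N \nabla\phi(x^N_i(t))\cdot\dot x^N_i(t).
\end{align*}
Substituting the ODE in the form \eqref{eq:discr1}, namely $\dot x^N_i(t)=(\Fun{a}\ast\mu^N(t))(x^N_i(t))$, and reinterpreting the empirical sum as an integral against $\mu^N(t)$, I obtain
\begin{align*}
\frac{d}{dt}\int_{\R^d}\phi(x)\,d\mu^N(t)(x)
= \int_{\R^d} \nabla\phi(x)\cdot(\Fun{a}\ast\mu^N(t))(x)\,d\mu^N(t)(x),
\end{align*}
which is exactly the required weak form of \eqref{eq:contdyn}. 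There is no real obstacle: the only technical point is to make sure the chain rule is legitimate, which holds since $x^N_i\in\mathrm{AC}([0,T];\R^d)$ with bounded derivative, and $\phi$ is smooth with compact support. The initial condition $\mu^N(0)=\frac{1}{N}\sum_i\delta_{x^N_{0,i}}\in\PC(\R^d)$ is immediate from the definition of the empirical measure.
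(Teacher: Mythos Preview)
Your proof is correct and follows essentially the same approach as the paper, which simply says the result ``can be proved by testing the equation \eqref{eq:contdyn} against a continuously differentiable function, arguing exactly as in \cite[Lemma 4.3]{MFOC}.'' Your third step is precisely this testing argument; you additionally spell out the compact-support and $\W_1$-continuity verifications that the paper leaves implicit, so your write-up is more self-contained but not different in substance.
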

\begin{proof}
It can be  proved by testing the equation  \eqref{eq:contdyn}  against a continuously differentiable function, arguing exactly as in \cite[Lemma 4.3]{MFOC}.
\end{proof}

\subsection{Existence and uniqueness of solutions  for  \eqref{eq:contdyn} }\label{ap3}

Variants of the following result are \cite[Lemma 6.7]{MFOC} and \cite[Lemma 4.7]{CanCarRos10}

\begin{lemma}\label{p-lipkernel}
Let $a \in X$ and let $\mu:[0,T] \rightarrow \mathcal{P}_c(\R^d)$ and $\nu: [0,T] \to \mathcal{P}_c(\R^d)$ be two continuous maps with respect to $\W_1$ satisfying
\begin{align}\label{eq:bsupp}
\supp(\mu(t)) \cup \supp(\nu(t)) \subseteq B(0,R),
\end{align}
for every $t \in [0,T]$, for some $R > 0$. Then for every $r > 0$ there exists a constant $L_{a,r,R}$ such that
\begin{align}\label{eq:inftynormW1}
\|\Fun{a} * \mu(t) - \Fun{a} * \nu(t)\|_{L_{\infty}(B(0,r))} \leq L_{a,r,R} \W_1(\mu(t),\nu(t))
\end{align}
for every $t \in [0,T]$.
\end{lemma}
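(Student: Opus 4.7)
The plan is to proceed by the standard duality argument for the $\W_1$-distance, reducing the bound to the local Lipschitz continuity of $\Fun{a}$ already established in Lemma \ref{p-Floclip}.

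First, I would fix $t \in [0,T]$ and $x \in B(0,r)$, and consider the auxiliary function $\varphi_x : \R^d \to \R^d$ defined by $\varphi_x(y) = \Fun{a}(x-y)$. Since both $\mu(t)$ and $\nu(t)$ are supported in $B(0,R)$, only values of $\varphi_x$ on $B(0,R)$ matter, and for $y \in B(0,R)$ we have $x - y \in B(0, r+R)$. Computing as in the proof of Lemma \ref{p-Floclip}, for $y_1, y_2 \in B(0,R)$,
\begin{align*}
|\varphi_x(y_1) - \varphi_x(y_2)|
 &= |a(|x-y_1|)(x-y_1) - a(|x-y_2|)(x-y_2)| \\
 &\leq \bigl( \|a\|_{L_{\infty}(\R_+)} + (r+R)\,\Lip_{[0,r+R]}(a) \bigr) |y_1 - y_2|,
\end{align*}
so $\varphi_x$ is Lipschitz on $B(0,R)$ with a constant $L_{a,r,R}$ independent of $x \in B(0,r)$ and $t$.

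Next, I would componentwise apply the McShane--Whitney extension to obtain a function $\tilde\varphi_x : \R^d \to \R^d$ that agrees with $\varphi_x$ on $B(0,R)$ and has the same Lipschitz constant $L_{a,r,R}$ on all of $\R^d$. Because $\supp(\mu(t)) \cup \supp(\nu(t)) \subseteq B(0,R)$, the convolution integrals are unchanged:
\begin{align*}
(\Fun{a} * \mu(t))(x) - (\Fun{a} * \nu(t))(x)
 = \int_{\R^d} \tilde\varphi_x(y) \, d(\mu(t) - \nu(t))(y).
\end{align*}
Applying the Kantorovich--Rubinstein dual formulation \eqref{dualwass} componentwise (rescaling $\tilde\varphi_x / L_{a,r,R}$ to be $1$-Lipschitz) yields
\begin{align*}
|(\Fun{a} * \mu(t))(x) - (\Fun{a} * \nu(t))(x)| \leq L_{a,r,R}\, \W_1(\mu(t), \nu(t)).
\end{align*}
Taking the supremum over $x \in B(0,r)$ gives the claim.

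The only mildly delicate point is keeping track of the Lipschitz constant uniformly in $x$ and $t$; this is handled by the fact that $r$ and $R$ are fixed and that $\varphi_x$ only needs to be controlled on $B(0,R)$, where $x-y$ stays in a fixed compact set $\overline{B(0,r+R)}$ on which both $\|a\|_{L_\infty}$ and $\Lip(a)$ are finite by $a \in X$. The componentwise application of the scalar duality formula \eqref{dualwass} is routine and can be absorbed into the constant by a factor of $\sqrt d$ if desired.
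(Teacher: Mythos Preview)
Your proof is correct. The paper's argument takes the primal rather than the dual route: instead of invoking the Kantorovich--Rubinstein formula \eqref{dualwass}, it fixes an optimal coupling $\pi \in \Gamma_o(\mu(t),\nu(t))$ and writes directly
\[
(\Fun{a}*\mu(t))(x) - (\Fun{a}*\nu(t))(x) = \int_{B(0,R)^2} \bigl(\Fun{a}(x-y) - \Fun{a}(x-z)\bigr)\,d\pi(y,z),
\]
then bounds the integrand by $\Lip_{B(0,R+r)}(\Fun{a})\,|y-z|$ via Lemma \ref{p-Floclip}, so that the integral is at most $\Lip_{B(0,R+r)}(\Fun{a})\,\W_1(\mu(t),\nu(t))$. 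This coupling argument is a shade more direct: it sidesteps the McShane--Whitney extension and the componentwise application of duality, and it yields the explicit constant $L_{a,r,R} = \Lip_{B(0,R+r)}(\Fun{a})$ without the extra dimensional factor. Your duality approach is of course equivalent in spirit and equally valid.
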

\begin{proof}
Fix $t \in [0,T]$ and take $\pi \in \Gamma_o(\mu(t),\nu(t))$. Since the marginals of $\pi$ are by definition $\mu(t)$ and $\nu(t)$, it follows
\begin{align*}
\Fun{a} * \mu(t)(x) - \Fun{a} * \nu(t)(x) &= \int_{B(0,R)} \Fun{a}(x-y) d\mu(t)(y) - \int_{B(0,R)} \Fun{a}(x-z) d\nu(t)(z)  \\
&= \int_{B(0,R)^2} \left(\Fun{a}(x-y) - \Fun{a}(x-z)\right) d\pi(y,z)
\end{align*}
By using Lemma \ref{p-Floclip} and the hypothesis \eqref{eq:bsupp}, we have
\begin{align*}
\|\Fun{a} * \mu(t) - \Fun{a} * \nu(t)\|_{L_{\infty}(B(0,r))} &\leq \esssup_{x \in B(0,r)} \int_{B(0,R)^2} \left|\Fun{a}(x-y) - \Fun{a}(x-z)\right| d\pi(y,z) \\
&\leq \Lip_{B(0,R+r)}(\Fun{a}) \int_{B(0,R)^2} |y - z| d\pi(y,z) \\
&= \Lip_{B(0,R+r)}(\Fun{a}) \W_1(\mu(t),\nu(t)),
\end{align*}
hence \eqref{eq:inftynormW1} holds with $L_{a,r,R} = \Lip_{B(0,R+r)}(\Fun{a})$.
\end{proof}

We show now the proof of Proposition \ref{pr:exist} which states the existence of solutions for \eqref{eq:contdyn} .

\begin{proof}[Proof of Proposition \ref{pr:exist}]

Let us define the quantity $\mathcal X_N(t) := \max_{i=1,\dots,N} |x_i^N(t)|$. By integration of \eqref{eq:discr1} we obtain
\begin{eqnarray*}
 |x_i^N(t)| &\leq& |x^N_{0,i}| + \int_0^t (\Fun{a} * \mu^N(s))(x_i^N)| ds \\
&\leq&  |x^N_{0,i}| + \int_0^t\frac{1}{N} \sum_{j=1}^N |a(|x_i-x_j|)||x_j-x_i| ds \\
&\leq&  |x^N_{0,i}| + \|a\|_{L_{\infty}(\R_+)} \int_0^t\frac{1}{N} \sum_{j=1}^N( |x_j| + | x_i| )ds,
\end{eqnarray*}
implying
$$
\mathcal X_N(t) \leq \mathcal X_N(0) + 2  \|a\|_{L_{\infty}(\R_+)} \int_0^t \mathcal X_N(s) ds. 
$$
Hence, Gronwall's Lemma and the hypothesis $x^{N}_{0,i} \in \supp(\mu_0)$ for every $N \in \N$ and $i = 1, \ldots, N$, imply that
\begin{align*}
\mathcal X_N(t) \leq \mathcal X_N(0) e^{2 \|a\|_{L_{\infty}(\R_+)} t} \leq C_0 e^{2 \|a\|_{L_{\infty}(\R_+)} t} \text{ for a.e. } t \in [0,T],
\end{align*}
for some uniform constant $C_0$ depending only on $\mu_0$. Therefore, the support of the empirical measure $\mu^N(\cdot)$ is bounded uniformly in $N$ in a ball $B(0,R) \subset \R^d$, where
\begin{align}\label{Rest}
R =  C_0 e^{2 \|a\|_{L_{\infty}(\R_+)} T}.
\end{align}
Now, notice that from \eqref{dualwass} it follows
$$
\mathcal W_1(\mu^N(t), \mu^N(s)) \leq \frac{1}{N} \sum_{i=1}^N | x_i^N(t) - x_i^N(s)| ,
$$
and the local Liptschitz contiunuity of $\mu^N(t)$ follows from the one of $x_i^N(t)$: indeed  $|x^N_i(t)| \leq R$ for a.e. $t \in [0,T]$, for all $N \in N$ and $i = 1, \ldots, N$, and Lemma \ref{p-estkernel} yields
\begin{align*}
|\dot{x}^N_i(t)| &= |(\Fun{a}*\mu^N(t))(x^N_i(t))| \\
&\leq \|a\|_{L_{\infty}(\R_+)} \left( |x^N_i(t)| + \frac{1}{N}\sum^N_{j = 1}|x^N_j(t)|\right) \\
&\leq 2R\|a\|_{L_{\infty}(\R_+)}.
\end{align*}
Hence, the sequence $(\mu^N)_{N \in \N} \subset \mathcal{C}^0([0,T],\mathcal{P}_1(B(0,R)))$ is equicontinuous, because equi-Lipschitz continuous, and equibounded in the complete metric space $(\mathcal{P}_1(B(0,R)),\W_1)$.
Therefore, we can apply the Ascoli-Arzel\'{a} Theorem for functions with values in a metric space (see for instance, \cite[Chapter 7, Theorem 18]{KelleyTop}) to infer the existence of a subsequence $(\mu^{N_k})_{k \in \N}$ of $(\mu^N)_{N \in \N}$ such that
\begin{align}\label{eq:unifconv}
\lim_{k \rightarrow \infty}\W_1(\mu^{N_k}(t),\mu(t)) = 0 \quad \text{ uniformly for a.e. } t \in [0,T],
\end{align}
for some $\mu \in \mathcal{C}^0([0,T],\mathcal{P}_1(B(0,R)))$ with Lipschitz constant bounded by $2R\|a\|_{L_{\infty}(\R_+)}$. The hypothesis $\lim_{N\rightarrow\infty}\W_1(\mu^N_0,\mu_0) = 0$ now obviously implies $\mu(0) = \mu_0$. In particular it holds
\begin{equation}\label{initialdatum}
\lim_{k\to \infty} \langle \varphi, \mu^N(t) - \mu^N(0) \rangle  =  \langle \varphi, \mu(t) - \mu_0 \rangle
\end{equation}
for all $\varphi \in \mathcal{C}^1_c(\R^d;\R)$.

We are now left with verifying that this curve $\mu$ is a solution of \eqref{eq:contdyn}. For all $t \in [0,T]$ and for all $\varphi \in \mathcal{C}^1_c(\R^d;\R)$, it holds
\begin{align*}
\frac{d}{dt}\langle \varphi, \mu^N(t) \rangle = \frac{1}{N}\frac{d}{dt} \sum^N_{i = 1} \varphi(x^N_i(t)) = \frac{1}{N} \sum^N_{i = 1} \nabla\varphi(x^N_i(t)) \cdot \dot{x}_i^N(t).
\end{align*}
By directly applying the substitution $\dot{x}_i^N(t) = (\Fun{a}*\mu^N(t))(x^N_i(t))$, we have
\begin{align*}
\langle \varphi, \mu^N(t) - \mu^N(0) \rangle = \int^t_0 \left[ \int_{\R^d}\nabla \varphi(x) \cdot (\Fun{a}*\mu^N(s))(x) d\mu^N(s)(x) \right] ds.
\end{align*}
By Lemma \ref{p-lipkernel}, the inequality \eqref{eq:inftynormW1}, and the compact support of $\varphi \in \mathcal{C}^1_c(\R^d;\R)$, it follows
\begin{align*}
\lim_{N \rightarrow \infty} \|\nabla\varphi \cdot (\Fun{a}*\mu^N(t) - \Fun{a}*\mu(t))\|_{L_{\infty}(\R^d)} = 0 \quad \text{ uniformly for a.e. } t \in [0,T].
\end{align*}
If we denote with $\mathcal L_1\llcorner_{[0,t]}$ the Lebesgue measure on the time interval $[0,t]$, since the product measures $\frac{1}{t} \mu^{N}(s) \times \mathcal L_1\llcorner_{[0,t]}$ converge in $\mathcal P_1([0,t] \times \mathbb R^{d})$ to $\frac{1}{t} \mu(s) \times \mathcal L_1\llcorner_{[0,t]}$, we finally get from the dominated convergence theorem that
\begin{align}
\lim_{N \to \infty} \int_0^{t} \int_{\mathbb R^{d}} \nabla \phi(x) \cdot (\Fun{a}*&\mu^N(s))(x) d\mu^N(s)(x) ds  \nonumber \\
&=  \int_0^{t} \int_{\mathbb R^{d}} \nabla \phi(x) \cdot (\Fun{a}*\mu(s))(x) d \mu(s)(x) ds, \label{limitmf}.
\end{align}
The statement now follows from combination of \eqref{initialdatum} and \eqref{limitmf}.  
\end{proof}

\begin{proposition}\label{exmono}
Fix $T > 0$, $a \in X$, $\mu_0 \in \mathcal{P}_c(\R^d)$, $\xi_0 \in \R^d$ and $R > 0$. 
For every map $\mu:[0,T] \rightarrow \PP(\R^d)$ which is continuous with respect to $\W_1$ such that
\begin{align*}
\supp(\mu(t)) \subseteq B(0,R) \quad \text{ for every } t \in [0,T],
\end{align*}
there exists a unique solution of system \eqref{eq:transpdyn} with initial value $\xi_0$ defined on the whole interval $[0,T]$.
\end{proposition}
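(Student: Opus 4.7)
The plan is to cast \eqref{eq:transpdyn} as a Carath\'eodory ODE and invoke Theorem \ref{cara-global} together with Proposition \ref{le:uniquecara}. Define
\begin{align*}
g : [0,T] \times \R^d \to \R^d, \qquad g(t,y) := (\Fun{a} * \mu(t))(y).
\end{align*}
The first step is to verify that $g$ is a Carath\'eodory function. Continuity of $y \mapsto g(t,y)$ for each fixed $t$ follows directly from Lemma \ref{p-Fmuloclip}, since $\mu(t) \in \mathcal{P}_c(\R^d)$. For measurability of $t \mapsto g(t,y)$ at a fixed $y$, I would use Lemma \ref{p-lipkernel}: applied with $\nu = \mu$ evaluated at two nearby times and $r = |y|$, it yields
\begin{align*}
|g(t,y) - g(s,y)| \leq L_{a,|y|,R}\,\W_1(\mu(t),\mu(s)),
\end{align*}
so continuity of $\mu$ in $\W_1$ implies continuity (hence measurability) of $t \mapsto g(t,y)$.

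The second step is to check sublinear growth of $g$. Since $\supp(\mu(t)) \subseteq B(0,R)$, Lemma \ref{p-estkernel} gives
\begin{align*}
|g(t,y)| \leq \|a\|_{L_\infty(\R_+)} \bigl(|y| + R\bigr) \leq C(1 + |y|)
\end{align*}
for a suitable constant $C = C(a,R)$ independent of $t$. This is precisely the hypothesis \eqref{ttz} of Theorem \ref{cara-global}, which immediately delivers a global solution $\xi : [0,T] \to \R^d$ of \eqref{eq:transpdyn} satisfying the a priori bound \eqref{gron}.

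For uniqueness I would invoke Proposition \ref{le:uniquecara} with $g_1 = g_2 = g$ and $y_1(0) = y_2(0) = \xi_0$: one only has to verify local Lipschitz continuity of $g$ in the spatial variable, uniformly in $t \in [0,T]$. But for any compact set $K \subset \R^d$, Lemma \ref{p-Fmuloclip} applied to the measure $\mu(t)$ (whose support sits inside the fixed ball $B(0,R)$ independent of $t$) produces a Lipschitz constant of the form $C_K \Lip_{\widehat{K}}(a) + \|a\|_{L_\infty(\R_+)}$ which depends only on $K$, $R$, and $a$, and not on $t$. Hence the hypotheses of Proposition \ref{le:uniquecara} hold on the ball $B(0,\rho_{|\xi_0|,C,T})$ determined by the growth estimate, and \eqref{gronvalla} with $g_1 = g_2$ forces $y_1 \equiv y_2$, concluding uniqueness.

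The only subtle point is the uniform-in-$t$ control of the local Lipschitz constant of $g(t,\cdot)$; this is handled cleanly because the supports of $\mu(t)$ all lie in the same ball $B(0,R)$, so the compact set $\widehat{K}$ appearing in Lemma \ref{p-Fmuloclip} can be chosen once and for all. Everything else is a direct application of the machinery developed in Appendix \ref{ap00} and Appendix \ref{ap1}.
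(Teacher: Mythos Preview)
Your proof is correct and follows essentially the same route as the paper: the paper's proof is a one-line sketch invoking Lemma \ref{p-estkernel} and Lemma \ref{p-Fmuloclip} together with Theorem \ref{cara-global} for existence and Proposition \ref{le:uniquecara} for uniqueness, and you have simply filled in the details (including the Carath\'eodory verification via Lemma \ref{p-lipkernel} and the uniform-in-$t$ Lipschitz bound, which the paper leaves implicit).
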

\begin{proof}
The statement follows again by a proper combination of Lemma \ref{p-estkernel}  and  Lemma \ref{p-Fmuloclip} with Theorem \ref{cara-global}
for the existence, and the uniqueness similarly follows from Proposition \ref{le:uniquecara}.

\end{proof}


The following Lemma and \eqref{gronvalla} are the main ingredients of the proof of Theorem \ref{uniq} on continuous dependance on initial data and uniqueness
of solutions for \eqref{eq:contdyn}.

\begin{lemma}\label{primstim}
Let $\mathcal{T}_1$ and $\mathcal{T}_2 \colon \R^n \to \R^n$ be two bounded Borel measurable functions. Then, for every $\mu \in \PP(\R^n)$ one has
\begin{align*}
\W_1((\mathcal{T}_1)_{\#}\mu, (\mathcal{T}_2)_{\#} \mu) \le \|\mathcal{T}_1-\mathcal{T}_2\|_{L_\infty({\rm supp}\,\mu)}.
\end{align*}
If in addition $\mathcal{T}_1$ is locally Lipschitz continuous, and $\mu$, $\nu \in \PP(\R^n)$ are both compactly supported on a ball $B(0,r)$ of $\R^n$ for $r>0$, then
\begin{align*}
\W_1((\mathcal{T}_1)_{\#} \mu, (\mathcal{T}_1)_{\#} \nu) \le \Lip_{B(0,r)}(E_1) \W_1(\mu, \nu).
\end{align*}
\end{lemma}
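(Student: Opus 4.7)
My plan is to attack both estimates via the coupling definition of $\W_1$ (equation \eqref{e_Wp}), constructing explicit admissible couplings as push-forwards.

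For the first inequality, I would consider the Borel map $\Phi \colon \R^n \to \R^n \times \R^n$ defined by $\Phi(x) = (\mathcal{T}_1(x), \mathcal{T}_2(x))$ and set $\pi := \Phi_{\#} \mu$. By checking the marginals, $\pi \in \Gamma((\mathcal{T}_1)_{\#}\mu, (\mathcal{T}_2)_{\#}\mu)$ (the projections of $\Phi$ onto the first and second coordinates are precisely $\mathcal{T}_1$ and $\mathcal{T}_2$, so $p_{i\#}\pi = (\mathcal{T}_i)_{\#}\mu$). Then directly from the definition of $\W_1$ and the change-of-variables formula for push-forwards,
\[
\W_1((\mathcal{T}_1)_{\#}\mu, (\mathcal{T}_2)_{\#}\mu) \leq \int_{\R^n \times \R^n} |x - y|\, d\pi(x,y) = \int_{\R^n} |\mathcal{T}_1(x) - \mathcal{T}_2(x)|\, d\mu(x).
\]
Bounding the integrand by its $L_\infty$-norm on $\supp \mu$ yields the claim.

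For the second inequality, I would exploit the existence of an optimal coupling (recall $\Gamma_o(\mu,\nu) \neq \emptyset$ as noted after \eqref{dualwass}). Let $\pi \in \Gamma_o(\mu,\nu)$ and consider the product map $(\mathcal{T}_1, \mathcal{T}_1) \colon \R^n \times \R^n \to \R^n \times \R^n$ given by $(x,y) \mapsto (\mathcal{T}_1(x), \mathcal{T}_1(y))$. The push-forward $\widetilde{\pi} := (\mathcal{T}_1,\mathcal{T}_1)_{\#} \pi$ belongs to $\Gamma((\mathcal{T}_1)_{\#}\mu, (\mathcal{T}_1)_{\#}\nu)$ by the same marginal computation as above. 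Since $\supp \pi \subseteq \supp \mu \times \supp \nu \subseteq B(0,r) \times B(0,r)$, the local Lipschitz property of $\mathcal{T}_1$ gives
\[
\W_1((\mathcal{T}_1)_{\#}\mu, (\mathcal{T}_1)_{\#}\nu) \leq \int |\mathcal{T}_1(x) - \mathcal{T}_1(y)|\, d\pi(x,y) \leq \Lip_{B(0,r)}(\mathcal{T}_1) \int |x - y|\, d\pi(x,y),
\]
and by optimality of $\pi$ the last integral equals $\W_1(\mu,\nu)$.

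Neither step presents a serious obstacle; the only delicate point is ensuring measurability of $\Phi$ and $(\mathcal{T}_1,\mathcal{T}_1)$ so that the push-forwards are well-defined probability measures, and checking the marginals. Both follow immediately from $\mathcal{T}_1, \mathcal{T}_2$ being Borel measurable and the fact that products of Borel maps are Borel.
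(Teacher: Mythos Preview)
Your argument is correct and complete; both parts proceed via the standard construction of explicit admissible couplings as push-forwards, which is exactly the approach of the cited lemmas. The paper itself does not give a proof but simply refers to \cite[Lemma 3.11 and Lemma 3.13]{CanCarRos10}, so your write-up in fact supplies the details the paper omits. (Incidentally, the $E_1$ in the paper's statement is a typo for $\mathcal{T}_1$, which you silently corrected.)
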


\begin{proof}
See \cite[Lemma 3.11]{CanCarRos10} and \cite[Lemma 3.13]{CanCarRos10}.
\end{proof}

We can now prove Theorem \ref{uniq}.

\begin{proof}[Proof of Theorem \ref{uniq}]
Let  ${\mathcal T}^\mu_t$ and ${\mathcal T}^\nu_t$ be the flow maps associated to system \eqref{eq:transpdyn} with measure $\mu$ and $\nu$, respectively.
By \eqref{eq:fixedpoint}, the triangle inequality, Lemma \ref{p-lipkernel}, Lemma \ref{primstim} and \eqref{eq:liptrans} we have for every $t \in [0,T]$
\begin{align}
\begin{split}\label{start}
\W_1(\mu(t), \nu(t))&=\W_1(({\mathcal T}^\mu_t)_{\#} \mu_0, ({\mathcal T}^\nu_t)_{\#} \nu_0)  \\
&\le \W_1(({\mathcal T}^\mu_t)_{\#} \mu_0, ({\mathcal T}^\mu_t)_{\#} \nu_0) + \W_1(({\mathcal T}^\mu_t)_{\#} \nu_0, ({\mathcal T}^\nu_t)_{\#} \nu_0)\\
&\le e^{T \, \Lip_{B(0,R)}(\Fun{a})} \W_1(\mu_0, \nu_0)+\|{\mathcal T}^\mu_t-{\mathcal T}^\nu_t\|_{L_\infty(B(0,R))}.
\end{split}
\end{align}

Using \eqref{gronvalla} with $y_1(0)= y_2(0)$ we get
\begin{equation}\label{stima2}
\|{\mathcal T}^\mu_t-{\mathcal T}^\nu_t\|_{L_\infty(B(0,r))}\le e^{t \, \Lip_{B(0,R)}(\Fun{a})}\int_0^t \|\Fun{a}* \mu(s)-\Fun{a}* \nu(s)\|_{L_\infty(B(0,R))}\,ds.
\end{equation}

Combining \eqref{start} and \eqref{stima2} with Lemma \ref{p-lipkernel}, we have
$$
\W_1(\mu(t), \nu(t))\le e^{T \, \Lip_{B(0,R)}(\Fun{a})} \left(\W_1(\mu^0, \nu_0)+ L_{a,R,R}\int_0^t \W_1(\mu(s), \nu(s)) \,ds\right)
$$
for every $t \in [0, T]$, where $L_{a,R,R}$ is the constant from Lemma \ref{p-lipkernel}. Gronwall's inequality now gives
$$
\W_1(\mu(t), \nu(t))\le e^{T \, \Lip_{B(0,R)}(\Fun{a}) + L_{a,R,R}} \W_1(\mu^0, \nu_0),
$$
which is exactly \eqref{stab} with $\overline{C}= e^{T \, \Lip_{B(0,R)}(\Fun{a}) + L_{a,R,R}}$.

Consider now two solutions of \eqref{eq:contdyn} with the same initial datum $\mu_0$.  By definition they both satisfy \eqref{supptot} for some $R>0 $ and \eqref{stab} guarantees they both describe the same trajectory in $\PP(\R^d)$. This concludes the proof.
\end{proof}

\bibliographystyle{abbrv}
\bibliography{biblio}	
\addcontentsline{toc}{section}{References}

\end{document}